\documentclass[11pt,a4paper,leqno]{amsart}

\usepackage[latin1]{inputenc}
\usepackage[T1]{fontenc}
\usepackage{amsfonts}
\usepackage{amsmath}
\usepackage{amssymb}
\usepackage{eurosym}
\usepackage{mathrsfs}
\usepackage{palatino}
\usepackage{color}
\usepackage{xcolor}
\usepackage{esint}
\usepackage[
    colorlinks=true,
    linkcolor=blue,
    citecolor=blue,
    urlcolor=blue
]{hyperref}

\numberwithin{equation}{section}

\swapnumbers
\theoremstyle{plain}
\newtheorem{thm}[equation]{Theorem}
\newtheorem{lem}[equation]{Lemma}
\newtheorem{prop}[equation]{Proposition}
\newtheorem{cor}[equation]{Corollary}

\theoremstyle{definition}
\newtheorem{defn}[equation]{Definition}

\theoremstyle{remark}
\newtheorem{rem}[equation]{Remark}

\title{Sparse bounds for maximal rough singular integrals}

\author{Yuhao Wu}

\address{Center for Applied Mathematics, Tianjin University, Weijin Road 92, 300072 Tianjin, China}
\email{yuhao\_wu@tju.edu.cn}
\makeatletter
\@namedef{subjclassname@2020}{%
  \textup{2020} Mathematics Subject Classification}
\makeatother

\subjclass[2020]{42B20, 42B25}
\keywords{Microlocal decomposition, stopping-time collections, sparse domination, rough singular integrals.}

\begin{document}

\allowdisplaybreaks

\begin{abstract}
Let $\Omega\in L^1(S^{d-1})$ have vanishing average, and let
$T_\Omega^\ast$ be the maximal truncation of the associated rough
homogeneous singular integral. We prove quantitative 
sparse bounds for $T_\Omega^\ast$. If
$\Omega\in L^\infty(S^{d-1})$, then, for every $1<p<\infty$,
\[
    \|T_\Omega^\ast\|_{(1,p)\text{-}\mathrm{sparse}}
    \lesssim_d
    p'\|\Omega\|_{L^\infty(S^{d-1})}.
\]
For unbounded angular kernels, if $1<q<\infty$ and
$\Omega\in L^{q,1}\log L(S^{d-1})$, then the same estimate holds
for $q'\leq p<\infty$, with the right-hand side replaced by
\[
    C_{d,q}p'
    \|\Omega\|_{L^{q,1}\log L(S^{d-1})}.
\]
These estimates retain a genuine $L^1$ average in the first
entry of the sparse form. In the bounded-kernel case,
the upper bound has the same linear growth in $p'$ as
the known sparse bound for the nonmaximal operator.
The unbounded-kernel estimate includes the critical
exponent $p=q'$.
As a consequence, we obtain weighted weak-type $(1,1)$
estimates for all $A_1$ weights in the bounded-kernel case
and for weights in $A_1\cap RH_{q'}$ in the unbounded-kernel case.
The proof combines physical-space linearization and
microlocal decomposition with localized sparse testing.
An amplitude decomposition of the second input, together
with the Rademacher--Menshov inequality, yields the
quantitative dependence on $p$.
\end{abstract}

\maketitle

\section{Introduction and main results}

Let \(d\geq 2\), and let \(\sigma\) denote the normalized surface measure
on \(S^{d-1}\). Given \(\Omega\in L^1(S^{d-1})\) with
\[
    \int_{S^{d-1}}\Omega(\theta)\,d\sigma(\theta)=0,
\]
consider the rough homogeneous singular integral
\[
    T_\Omega f(x)
    :=
    \operatorname{p.v.}\int_{\mathbb R^d}
        \frac{\Omega\bigl((x-y)/|x-y|\bigr)}{|x-y|^d}
        f(y)\,dy
\]
and its maximal truncation
\[
    T_\Omega^\ast f(x)
    :=
    \sup_{\varepsilon>0}
    \left|
        \int_{|x-y|>\varepsilon}
        \frac{\Omega\bigl((x-y)/|x-y|\bigr)}{|x-y|^d}
        f(y)\,dy
    \right|.
\]
The lack of angular regularity places these operators outside the
standard Calder\'on--Zygmund theory.  This is especially consequential
for \(T_\Omega^\ast\), where the roughness of the kernel must be handled
simultaneously with the supremum over truncation parameters.

The \(L^p\)-theory of rough homogeneous singular integrals originates
in the work of Calder\'on and Zygmund \cite{MR84633}.
Fourier-transform methods developed by Duoandikoetxea and Rubio de
Francia \cite{MR837527} provided a flexible treatment of both the
singular integral and its maximal truncation; see also
\cite{MR1647912}. The weighted theory was developed in
\cite{MR1047758,MR1089418}, while quantitative weighted estimates
were obtained in \cite{MR3625128} and further developed in
\cite{MR3969435}. For related developments concerning Herz spaces,
compositions of rough singular integrals, and non-standard rough
singular integrals, respectively, we refer to
\cite{MR1671956,MR4225825,MR4745059}.

The endpoint theory is substantially more delicate. For the
nonmaximal operator \(T_\Omega\), fundamental weak-type estimates were
obtained in \cite{MR951506,MR943929,MR938680}. Seeger \cite{MR1317232} 
proved that \(T_\Omega\) is of weak type \((1,1)\) in every dimension
\(d\geq2\) under the condition
\(\Omega\in L\log L(S^{d-1})\), using a microlocal decomposition of
the kernel. A general weak-type criterion for singular integrals with
rough kernels, together with several applications, was developed by
Ding and Lai \cite{MR3894030}. Related endpoint estimates on
homogeneous groups were obtained by Tao \cite{MR1757083}; see also
Seeger and Tao \cite{MR1839769} for sharp Lorentz-space estimates.

The corresponding endpoint problem for the maximal truncation
\(T_\Omega^\ast\) remained open much longer. Honz\'ik
\cite{MR4165473} and Bhojak and Mohanty
\cite{MR4554742} obtained endpoint estimates with logarithmic
bumps. Lai \cite{lai2025weak11estimatemaximal} subsequently proved
that \(T_\Omega^\ast\) is of weak type \((1,1)\) whenever
\(\Omega\in L\log L(S^{d-1})\). Under the same \(L\log L\) assumption, Bhojak and Shrivastava
\cite{bhojak2026endpointvariationjumpinequalities} established
weak-type estimates for the associated variation and jump
operators; their variational estimate in particular recovers the
weak-type estimate for \(T_\Omega^\ast\). For angular kernels in
block spaces, Liu, Liu, and Zhang \cite{MR5099059} obtained an
\(L\log\log L\)-type endpoint estimate for \(T_\Omega^\ast\).
Related endpoint results for the rough maximal operator and the
maximal Calder\'on commutator with rough kernel were obtained in
\cite{MR4861142,MR4906326}.

Sparse domination provides a localized strengthening of norm
inequalities. For the nonmaximal operator \(T_\Omega\),
Conde-Alonso, Culiuc, Di Plinio, and Ou \cite{MR3668591} proved an
 \((1,p)\)-sparse bound. For bounded angular kernels their
estimate holds for every \(p>1\), whereas for
\(\Omega\in L^{q,1}\log L(S^{d-1})\) it holds in the range
\(p\geq q'\). See also
\cite{MR3085756,MR3484688,MR4007575,MR4018107} for related sparse
methods.

For the maximal truncation, the previously known estimates have a
different averaging structure. Di Plinio, Hyt\"onen, and Li
\cite{MR4245601} proved, for
\(\Omega\in L^\infty(S^{d-1})\), the symmetric estimate
\[
    \sup_{0<\varepsilon<1}
    \varepsilon
    \|T_\Omega^\ast\|_{(1+\varepsilon,1+\varepsilon)
                    \text{-}\mathrm{sparse}}
    \lesssim_d
    \|\Omega\|_{L^\infty(S^{d-1})}.
\]

Tao and Hu \cite{MR4728821} subsequently obtained, for every
\(1<r<\infty\), a refined sparse estimate consisting of a
\((1,r)\)-term, with coefficient \(r'\), and an additional
\((L^\Phi,L^r)\)-term, where
\[
    \Phi(t)=t\log\log(e^2+t),
\]
together with corresponding quantitative weighted consequences.
Choudhary, Shrivastava, and Shuin
\cite{MR4882780} established symmetric sparse bounds
for maximal oscillatory rough singular integrals. Sparse methods for
commutators of rough singular integrals were developed by Lan, Tao,
and Hu \cite{MR4168195}.

The maximal sparse estimates above involve either
$L^{1+\varepsilon}$-averages in both entries or an additional
Orlicz average in the first entry. In this paper, we establish
a pure $(1,p)$-sparse bound for $T_\Omega^\ast$.
For bounded angular kernels, the estimate holds for every
$p>1$, with a constant growing at most linearly in $p'$.
We also treat unbounded angular kernels in
$L^{q,1}\log L(S^{d-1})$ throughout the range
$q'\leq p<\infty$, including the critical exponent $p=q'$.

For a cube \(Q\subset\mathbb R^d\), set
\[
    \langle f\rangle_{p,Q}
    :=
    \left(\frac{1}{|Q|}\int_Q|f|^p\right)^{1/p},
    \qquad
    \langle f\rangle_Q:=\langle f\rangle_{1,Q}.
\]
Recall that a collection \(\mathcal S\) of cubes is called
\(\eta\)-sparse, \(0<\eta<1\), if there exist measurable sets
\(E_Q\subset Q\), \(Q\in\mathcal S\), such that
\[
    |E_Q|\geq\eta|Q|
    \quad\text{and}\quad
    E_Q\cap E_{Q'}=\emptyset
    \quad\text{whenever }Q\neq Q'.
\]
The Lorentz--Zygmund functional used in the second part of the theorem
is defined in Section~\ref{sec:preliminaries}.

\begin{thm}\label{thm:main}
Let \(d\geq1\), and suppose that
\(\Omega\in L^1(S^{d-1})\) satisfies
\[
    \int_{S^{d-1}}\Omega(\theta)\,d\sigma(\theta)=0.
\]

\begin{enumerate}
\item
If \(\Omega\in L^\infty(S^{d-1})\), then, for every
\(1<p<\infty\) and every pair of bounded, compactly supported
functions \(f_1,f_2\), there exists a \(1/2\)-sparse collection
\(\mathcal S=\mathcal S(f_1,f_2,p)\) such that
\begin{equation}\label{eq:main-Linfty}
    \left|
        \left\langle T_\Omega^\ast f_1,f_2\right\rangle
    \right|
    \lesssim_d
    p'\|\Omega\|_{L^\infty(S^{d-1})}
    \sum_{Q\in\mathcal S}
        |Q|\langle f_1\rangle_Q\langle f_2\rangle_{p,Q}.
\end{equation}

\item
Let \(1<q<\infty\). If
\(\Omega\in L^{q,1}\log L(S^{d-1})\), then, for every \(q'\leq p<\infty\)
and every pair of bounded, compactly supported functions
\(f_1,f_2\), there exists a \(1/2\)-sparse collection
\(\mathcal S=\mathcal S(f_1,f_2,p)\) such that
\begin{equation}\label{eq:main-LqlogL}
    \left|
        \left\langle T_\Omega^\ast f_1,f_2\right\rangle
    \right|
    \lesssim_{d,q}
    p'\|\Omega\|_{L^{q,1}\log L(S^{d-1})}
    \sum_{Q\in\mathcal S}
        |Q|\langle f_1\rangle_Q\langle f_2\rangle_{p,Q}.
\end{equation}
\end{enumerate}
\end{thm}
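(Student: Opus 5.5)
We will follow the localized sparse-testing scheme of Conde-Alonso--Culiuc--Di Plinio--Ou, adapted to the maximal truncation.

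\emph{Linearization.} Fix a measurable choice $x\mapsto\varepsilon(x)$ that nearly attains the supremum, and replace $\varepsilon(x)$ by a dyadic scale $2^{-j(x)}$; the cost is a bounded single-scale piece dominated by $\|\Omega\|_{L^1}M f_1$. Write $K=\sum_s K_s$ with $K_s=\Omega(x/|x|)|x|^{-d}\mathbf 1_{2^{s}\le|x|<2^{s+1}}$. Then the operator becomes $\sum_s \mathbf 1_{\{s\ge j(x)\}}K_s*f_1(x)$, a linear operator whose truncation sets depend on $x$. The goal is an abstract sparse lemma. It suffices to prove, for each dyadic cube $Q$, the localized estimate
\[
|\langle T_{Q} (f_1\mathbf 1_{3Q}), f_2\mathbf 1_{Q}\rangle|\lesssim C_\Omega p' |Q|\langle f_1\rangle_{3Q}\langle f_2\rangle_{p,3Q}
\]
where $T_Q$ keeps only the scales $s<\log_2\ell(Q)$. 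The estimate must hold off a stopping set where $f_1\mathbf 1_{3Q}$ has large averages, i.e.\ when $f_1$ is replaced by a Calder\'on--Zygmund good part plus bad parts supported on stopping cubes. Sparse iteration then gives the $1/2$-sparse family, as in the standard argument.

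\emph{Key testing estimate.} The good part $g$ satisfies $\|g\|_\infty\lesssim\langle f_1\rangle_{3Q}$, so we need $L^{p}$-bounds for the linearized truncated operator on bounded functions. For that part we use the $L^{p'}$ boundedness of $T^\ast_\Omega$, with norm $O(p')$ near $p'=\infty$ (via Duoandikoetxea--Rubio de Francia). The bad part $b=\sum_P b_P$ has mean zero on the stopping cubes $P$. Here we apply Seeger's microlocal decomposition. For scale $s$ and cube size $2^{s-j}$ with $j\ge1$, split $K_s$ into angular pieces at aperture $2^{-j\gamma}$ and Fourier-localize them. This yields an $L^2$ decay $2^{-\epsilon j}$ for the nonsingular part plus an $L^1$-controlled exceptional part. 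The new difficulty is that the truncation indicator $\mathbf 1_{s\ge j(x)}$ destroys orthogonality in $s$. We handle this dually: pair against $f_2$ and, writing $f_2=\sum_k f_2\mathbf 1_{\{2^{k}\le |f_2|<2^{k+1}\}}$ (amplitude decomposition), estimate each level set in $L^2$. The maximal function over partial sums in $s$ is then controlled by the Rademacher--Menshov inequality, losing only $\log(\#\text{scales})$. That loss is absorbed by the geometric decay $2^{-\epsilon j}$. Summing over amplitude levels $k$ gives the factor $p'$, since $\sum_k 2^{k}|\{|f_2|\sim 2^k\}|^{1/p}$ versus the $L^{p}$ norm is a Lorentz $L^{p,1}$ versus $L^p$ comparison. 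We plan to avoid that loss by instead pairing with the $L^1$-size of $b$, so that only $\langle f_2\rangle_{p,Q}$ with constant $p'$ appears.

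\emph{Unbounded kernels.} For $\Omega\in L^{q,1}\log L$, decompose $\Omega=\sum_m \Omega_m$ with $\Omega_m$ bounded by $2^m$ on sets of measure $\sim 2^{-mq}\cdot$(normalized level). Each piece is treated with the bounded argument, tracking $\|\Omega_m\|_{L^1}$ for the exceptional terms and $\|\Omega_m\|_\infty$ in the $L^2$ terms. The exponent restriction $p\ge q'$ enters through H\"older in angle on the exceptional sets. The $L^{q,1}$ (rather than $L^q$) structure permits the endpoint $p=q'$, and the $\log L$ absorbs the number of scales at which $\Omega_m$ is split.

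\textbf{Main obstacle.} The hardest step is the bad-part estimate for the truncated sum, that is, controlling $\sup_{j}|\sum_{s\ge j}K_s*b|$ while keeping a pure $L^1$ average of $f_1$. The Rademacher--Menshov plus amplitude-decomposition step must lose only $p'$ and no Orlicz bump. If the logarithmic loss is not absorbed, we would try instead to choose the Seeger parameter $\gamma$ depending on the amplitude level.
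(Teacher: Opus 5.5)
\textbf{The Rademacher--Menshov step fails.} You treat this as the main obstacle, but the proposed fix does not address the actual problem. You apply Rademacher--Menshov to the partial sums over kernel scales and assert that the loss $\log(\#\text{scales})$ is absorbed by $2^{-\epsilon j}$. For a fixed separation $j$, however, the number of kernel scales interacting with atoms of side $2^{s-j}$ is not bounded in terms of $j$. Stopping collections are Whitney-type, so atoms of all small sizes occur near the boundary of the stopping shadow. A single point can therefore meet kernel pieces at every scale between its distance to the shadow and $\ell(Q_0)$. The Rademacher--Menshov constant is then $\log N$ with $N$ depending on the stopping collection, so you get no testing constant uniform in $\mathcal Q$. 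Tying $N$ to the size of $f_1$ would at best reintroduce an Orlicz-type bump in the first entry, which is exactly what the theorem rules out. Neither the amplitude decomposition of $f_2$ nor an amplitude-dependent choice of $\gamma$ (your fallback) helps, because the loss is combinatorial in the scales.

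\textbf{The missing idea and a second error.} You need a reorganization that makes the Rademacher--Menshov sequence have length $\lesssim 2^{dj}$. The paper, following Lai, proceeds as follows.
\begin{enumerate}
\item It localizes each kernel piece $T_k$ to cubes $K$ of side $2^k$, using $2^d$ translated grids and half-cubes $\frac12K$. Each contributing $K$ is assigned injectively a stopping cube of side $2^{k-j}$ inside $\frac32K$, which gives the packing bound $\sum|K|\lesssim 2^{dj}|Q_0|$ and a local version of it.
\item It sorts these cubes into occupancy generations. Here $F^n$ is the set of points covered by more than $u_0\simeq 2^{dj}$ of the nested children $K^\iota$ contained in $F^{n-1}$, and $|F^n|\lesssim 4^{-n}|Q_0|$.
\item Within a generation every chain has length at most $u_0$. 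Peeling off maximal cubes therefore gives at most $u_0$ layers, and the $x$-dependent truncation becomes a difference of two prefix sums of a fixed sequence.
\end{enumerate}
Rademacher--Menshov then costs only $\log(2+u_0)\lesssim 1+j$, and the generation decay gives a factor $2^{-(n-1)}$ in $L^2$, so the generations are summable. The factor $p'$ then comes from a single $j$-dependent amplitude threshold for $f_2$: the high part is handled by a positive-kernel $\mathcal Y_1$ pairing, each separation contributes $2^{-cj/p'}$, and $\sum_j 2^{-cj/p'}\lesssim p'$. It does not come from summing dyadic amplitude levels.

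Separately, your linearization error is wrong for unbounded $\Omega$. The single-scale piece is dominated by $M_\Omega f_1$, not by $\|\Omega\|_{L^1}Mf_1$. The obvious bound $M_\Omega f_1\lesssim\|\Omega\|_{L^q}M_{q'}f_1$ puts an $L^{q'}$ average on $f_1$, which is not allowed. You need a genuine $(1,p)$-sparse bound for $M_\Omega$. The paper gets it by writing $|\Omega|=(|\Omega|-\int|\Omega|\,d\sigma)+\int|\Omega|\,d\sigma$. The annular averages of the mean-zero part are dominated by the dyadic maximal truncation, and the dyadic theorem applies to that.
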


Theorem~\ref{thm:main} provides a maximal counterpart of the
sparse bounds in \cite{MR3668591}, retaining an \(L^1\)
average in the first entry and, for bounded angular kernels,
the same linear upper bound in \(p'\).
We do not claim that this dependence is optimal.
For unbounded angular kernels, the endpoint \(p=q'\)
leads to the reverse H\"older condition \(RH_{q'}\)
in the weighted consequence below.

The proof builds on Lai's physical-space approach to the
weak-type $(1,1)$ estimate
\cite{lai2025weak11estimatemaximal}.
The passage to sparse domination requires additional
testing estimates localized to stopping regions.
These estimates must preserve the cancellation of the
stopping atoms at separated scales and remain uniform
over the measurable truncation parameters.
Establishing this localization, together with the
quantitative dependence on $p$, is the main task of the paper.

We record the following weighted endpoint consequence. As usual,
\(L^{1,\infty}(w)\) denotes the weak space with respect to the measure
\(w(x)\,dx\), while \(A_1\) and \(RH_s\) denote the usual Muckenhoupt
and reverse H\"older classes.

\begin{cor}[Weighted weak-type endpoint]
\label{cor:weighted-weak-endpoint}
Suppose that \(\Omega\) has vanishing average.
\begin{enumerate}
\item
If \(\Omega\in L^\infty(S^{d-1})\), then
\[
    T_\Omega^\ast:
    L^1(w)\longrightarrow L^{1,\infty}(w)
\]
for every \(w\in A_1\).

\item
Let \(1<q<\infty\) and
\(\Omega\in L^{q,1}\log L(S^{d-1})\). Then
\[
    T_\Omega^\ast:
    L^1(w)\longrightarrow L^{1,\infty}(w)
\]
for every \(w\in A_1\cap RH_{q'}\).
\end{enumerate}
\end{cor}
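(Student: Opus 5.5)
The plan is to deduce the corollary from Theorem~\ref{thm:main} by the now standard passage from $(1,p)$-sparse forms to weighted weak-type $(1,1)$ bounds, as in the treatment of $T_\Omega$ in \cite{MR3668591}. Fix $w$ and $\lambda>0$. By monotone convergence and density, it suffices to treat bounded, compactly supported $f_1\ge0$ and to bound $w(G)$, where $G\subset\{T_\Omega^\ast f_1>\lambda\}$ is a bounded set. To linearize the supremum, I choose measurable truncation parameters $\varepsilon(x)$ and a unimodular sign so that the resulting linear operator $T$ satisfies $Tf_1\ge\lambda$ on $G$. The proof of Theorem~\ref{thm:main} is uniform in measurable truncations and in the second input, and this linearization is exactly its setting. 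The standard reduction then lets me replace $G$ by $G\setminus\widetilde E$, where
\[
\widetilde E=\{M_w(f_1/w)>C\lambda\}
\]
is an exceptional set. Since $w\in A_1$, the weighted maximal function $M_w$ is weak $(1,1)$ with respect to $w$, so $w(\widetilde E)\lesssim\lambda^{-1}\|f_1\|_{L^1(w)}$.

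Next I apply the sparse bound with $f_2=w\mathbf 1_{G'}$, where $G'=G\setminus\widetilde E$:
\[
\lambda\, w(G')\le\langle Tf_1,w\mathbf 1_{G'}\rangle\lesssim \sum_{Q\in\mathcal S}|Q|\langle f_1\rangle_Q\langle w\mathbf 1_{G'}\rangle_{p,Q}.
\]
In part (1) I take $p=r$, where $r>1$ is the reverse H\"older exponent of $w\in A_1$. Then
\[
\langle w\mathbf 1_{G'}\rangle_{r,Q}\le\langle w\rangle_{r,Q}\lesssim\langle w\rangle_Q\lesssim \inf_Q w.
\]
In part (2), $w\in RH_{q'}$ gives the same reverse H\"older inequality at $p=q'$, which is precisely the critical exponent allowed by the theorem. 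This is why $RH_{q'}$ appears in the statement.

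Summing over the sparse family would naively give $\sum_{Q}\int_Q f_1\,\inf_Q w$, which does not sum. The key step, and the main obstacle, is to exploit the factor $\mathbf 1_{G'}$ before discarding it. I first restrict to cubes $Q$ that meet $G'$ in a non-negligible proportion. For these cubes, avoiding $\widetilde E$ forces $\langle f_1\rangle_Q\lesssim\lambda\inf_Q w$ up to the $A_1$ constant. Cubes with tiny intersection are handled by a further splitting in the ratio $w(G'\cap Q)/w(Q)$, using that $A_1\subset A_\infty$ supplies a power gain from the reverse H\"older exponent.

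I then organize the cubes into layers $2^{-k-1}<\langle f_1\rangle_Q/(\lambda\inf_Q w)\le2^{-k}$. Sparseness together with the $A_\infty$ property bounds each layer by
\[
2^{-\delta k}\bigl(w(G')+\lambda^{-1}\|f_1\|_{L^1(w)}\bigr).
\]
For large $C$, the $w(G')$ contribution can be absorbed into the left-hand side. This is the argument of Domingo-Salazar--Lacey--Rey and of \cite{MR3668591} for $(1,p)$ forms, and I would import that lemma, checking that only the reverse H\"older exponent $p$ and the $A_\infty$ constant enter. Summing over $k$ gives
\[
w(G')\lesssim\lambda^{-1}\|f_1\|_{L^1(w)},
\]
and adding the estimate for $\widetilde E$ completes the proof.
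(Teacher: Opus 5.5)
Your strategy is the paper's: feed the $(1,\rho)$-sparse bound of Theorem~\ref{thm:main} into the known transference from $(1,\rho)$-sparse forms to weak type $(1,1)$ on $A_1\cap RH_\rho$. In part (1) $\rho=1+\varepsilon_w$ is a reverse H\"older exponent of $w\in A_1$, and in part (2) $\rho=q'$. The paper does this in one line by quoting \cite[Theorem~1.4]{MR3897012} with $p_0=1$, $q_0=\rho'$.

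\textbf{The step that fails.} The part you reconstruct by hand breaks at its first step. For $\widetilde E=\{M_w(f_1/w)>C\lambda\}$, the weak $(1,1)$ bound for $M_w$ on $L^1(w)$ gives $w(\widetilde E)\lesssim\lambda^{-1}\int (f_1/w)\,w\,dx=\lambda^{-1}\|f_1\|_{L^1(\mathbb R^d)}$. This is the \emph{unweighted} norm, not $\lambda^{-1}\|f_1\|_{L^1(w)}$. In fact no estimate of the form you claim can hold. Replacing $w$ by $cw$ changes neither $[w]_{A_1}$ nor the inequality you want, but it turns $\widetilde E$ into $\{M_w(f_1/w)>cC\lambda\}$. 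The $w$-measure of that set tends to $w(\mathbb R^d)=\infty$ as $c\to0$. The normalization $\langle f_1\rangle_Q\lesssim\lambda\inf_Qw$ that this set is meant to produce, and the layers $\langle f_1\rangle_Q/(\lambda\inf_Qw)\sim2^{-k}$ built on it, have the same defect. They are not invariant under $w\mapsto cw$, whereas the estimate you are proving is, so they cannot lead to a bound by $\lambda w(G')+\|f_1\|_{L^1(w)}$.

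\textbf{The repair.} Use the usual unweighted exceptional set $E=\{Mf_1>c\lambda\}$ with $c$ \emph{small}. By the Fefferman--Stein inequality and $Mw\le[w]_{A_1}w$, you get $w(E)\lesssim[w]_{A_1}(c\lambda)^{-1}\|f_1\|_{L^1(w)}$. Cubes of $\mathcal S$ contained in $E$ do not see $w\mathbf 1_{G\setminus E}$, and every other cube has $\langle f_1\rangle_Q\le c\lambda$. Now layer by $\langle f_1\rangle_Q\sim2^{-k}c\lambda$, $k\ge0$. One gets layer bounds $2^{-\delta k}(c\lambda\, w(G')+\|f_1\|_{L^1(w)})$ with constants independent of $c$. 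The absorption of $\lambda w(G')$ works precisely because $c$ is small; raising the threshold to $C\lambda$ with $C$ large, as you propose, goes the wrong way.

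\textbf{The endpoint in part (2).} At $p=q'$ you need the power gain $\langle w\mathbf 1_{G'}\rangle_{q',Q}\lesssim\langle w\rangle_Q(|G'\cap Q|/|Q|)^{\theta}$. This does not come from $A_1\subset A_\infty$, which only yields $RH_{1+\epsilon}$. It requires the self-improvement $RH_{q'}\subset RH_{q'+\epsilon}$ (Gehring), or equivalently $w^{q'}\in A_\infty$.

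\textbf{Minor points.} The linearization is superfluous, since $T_\Omega^\ast f_1\ge0$. The second input $w\mathbf 1_{G'}$ must be truncated to be bounded. The $1/2$-sparse cubes should be moved to finitely many dyadic grids before any nested counting, as the paper remarks.

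With these corrections, or simply by citing the transference theorem as the paper does, your argument goes through.
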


\begin{proof}
Fix $\rho>1$ for which Theorem~\ref{thm:main} provides a
$(1,\rho)$-sparse bound. In the notation of \cite{MR3897012},
this means that $T_\Omega^\ast\in S(1,\rho')$.
If necessary, the sparse form may be transferred to finitely many
adjacent dyadic grids, changing only the dimensional constant.
Apply \cite[Theorem~1.4]{MR3897012} with
\[
    p_0=1,\qquad q_0=\rho',\qquad
    r=\frac{1+\rho'}2\in(1,\rho').
\]
Since $(q_0/p_0)'=\rho$, it follows that
\[
    \|T_\Omega^\ast f\|_{L^{1,\infty}(w)}
    \leq C_{\Omega,w,\rho}\|f\|_{L^1(w)}
    \qquad\text{for }w\in A_1\cap RH_\rho.
\]
In the unbounded-kernel case, choose $\rho=q'$.
The endpoint $p=q'$ is included in Theorem~\ref{thm:main},
so this gives the second assertion.

In the bounded-kernel case, every $w\in A_1$ belongs to
$RH_{1+\varepsilon_w}$ for some $\varepsilon_w>0$;
see \cite{MR2990061}. Choose $\rho=1+\varepsilon_w$ and use
the first part of Theorem~\ref{thm:main}. This proves the first assertion.
The estimates, initially obtained on bounded, compactly supported
functions, extend to $L^1(w)$ by density and sublinearity.
\end{proof}

Taking \(w\equiv1\) recovers the corresponding unweighted weak-type
\((1,1)\) estimates.
In dimension
\(d=1\), cancellation on \(S^0=\{-1,1\}\) makes \(T_\Omega^\ast\) a
constant multiple of the maximal Hilbert transform, so the theorem
follows from the usual sparse domination for maximal truncations of
Calder\'on--Zygmund operators; see, for instance, \cite{MR3484688}.
The proof below therefore assumes \(d\geq2\).

\medskip
\noindent\textit{Outline of the proof.}
We use the abstract sparse domination principle of Di Plinio,
Hyt\"onen, and Li \cite{MR4245601}, which reduces the theorem to two
localized testing estimates for stopping collections. 

Our starting point is Lai's physical-space organization of the dyadic
kernel pieces \cite{lai2025weak11estimatemaximal}. The main task is to
adapt this global endpoint argument to the localized testing forms
required by the sparse recursion. We localize the kernel pieces to
cubes of the corresponding scale and sort the cubes according to
their occupancy. Along each resulting nested chain, the maximal
truncation is reduced to a maximal partial sum, to which the
Rademacher--Menshov inequality is applied. For $s>1$, the physical localization preserves the
cancellation of each stopping atom. The adjacent case
$s=1$ is treated separately using kernel size estimates,
without requiring cancellation of the truncated atoms.

The interaction between the kernel scale and the smaller atomic scale
is treated by a microlocal decomposition adapted from
Seeger~\cite{MR1317232} and Lai~\cite{lai2025weak11estimatemaximal}.
The endpoint estimates give decay for signed sums in $L^2$.
We apply Rademacher--Menshov at exponent $2$.
For the quantitative testing estimate, we split the second
input according to its amplitude and combine the maximal
$L^2$ decay with a positive-kernel $\mathcal Y_1$ pairing
bound. This gives a fixed-separation paired estimate of
size $2^{-c_ds/p'}$, whose sum over $s$ is bounded by $C_dp'$.

For $\Omega\in L^{q,1}\log L(S^{d-1})$, a scale-dependent
angular decomposition gives the first localized estimate for every
$p\geq q'$. The second testing estimate follows directly from a
localized $L^r$ bound and the classical maximal estimate for $L^q$
angular kernels. Finally, the difference between radial and dyadic
truncations is controlled by the rough maximal operator. Its sparse
bound follows from the dyadic theorem by separating the constant
and mean-zero parts of $|\Omega|$.

The paper is organized as follows.  Section~\ref{sec:preliminaries}
contains the dyadic, sparse, stopping-time, Lorentz--Zygmund, and
Rademacher--Menshov preliminaries.  Section~\ref{sec:sparse-domination}
proves the bounded-kernel part of Theorem~\ref{thm:main}.
Section~\ref{sec:Lq1logL-extension} treats unbounded angular kernels
and completes the proof.

\section{Preliminaries}\label{sec:preliminaries}

In this section, we fix the notation used throughout the paper and collect
the auxiliary notions needed in the sparse domination argument. We first
recall sparse forms, translated cube families, and the Lorentz--Zygmund functional
appearing in the main theorem. We then introduce stopping collections and
their associated localized spaces. The section concludes with the
Rademacher--Menshov inequality used to control maximal partial sums.

\subsection{Notation, sparse forms, and the Lorentz--Zygmund functional}
\label{subsec:notation-sparse}

Throughout the paper, the surface measure \(\sigma\) on
\( S^{d-1}\) is normalized so that
\[
    \sigma( S^{d-1})=1.
\]
For a measurable set \(E\subset \mathbb R^d\), we write \(\mathbf 1_E\) for
its indicator function and \(|E|\) for its Lebesgue measure.
Given a cube
\(Q\subset\mathbb R^d\), let \(\ell(Q)\) denote its side length. For
\(\lambda>0\), \(\lambda Q\) denotes the cube concentric with \(Q\) whose
side length is \(\lambda\ell(Q)\). If \(Q\) is dyadic, its dyadic scale
\(s_Q\in\mathbb Z\) is determined by
\[
    \ell(Q)=2^{s_Q}.
\]

For \(0<p<\infty\), we retain the notation
\(\langle f\rangle_{p,Q}\) and \(\langle f\rangle_Q\) introduced
before Theorem~\ref{thm:main}, with the usual essential-supremum
interpretation when \(p=\infty\).
For \(1\leq p<\infty\), define
\[
    M_p f(x):=\sup_{Q\ni x}\langle f\rangle_{p,Q}
             =\bigl(M(|f|^p)(x)\bigr)^{1/p},
\]
where \(M=M_1\) is the Hardy--Littlewood maximal operator and the supremum
is taken over all cubes containing \(x\). If \(1\leq p\leq\infty\), then
\(p'\) denotes the conjugate exponent. We use the bilinear pairing
$\langle f,g\rangle:=\int_{\mathbb R^d}f(x)g(x)\,dx$,
also for complex-valued functions.
We use the Fourier transform
convention
\[
    \widehat f(\xi)
    =\int_{\mathbb R^d} e^{-2\pi i x\cdot\xi}f(x)\,dx.
\]
The relation \(A\lesssim B\) means that \(A\leq CB\) for a constant \(C\)
depending only on the dimension and on fixed structural parameters.
Dependence on an additional parameter \(\alpha\) is indicated by
\(A\lesssim_\alpha B\).
Throughout, \(|x|\) denotes the Euclidean norm on \(\mathbb R^d\),
whereas \(|x|_\infty\) denotes the \(\ell^\infty\)-norm. 
If $A,B\subset\mathbb R^d$, define
\[
    \operatorname{dist}(A,B)
    :=
    \inf_{\substack{x\in A\\y\in B}}|x-y|,
    \qquad
    \operatorname{dist}_\infty(A,B)
    :=
    \inf_{\substack{x\in A\\y\in B}}|x-y|_\infty.
\]
When one of the sets is a singleton, we identify the point with that
singleton.

For \(1\leq p_1,p_2<\infty\), the associated sparse form is
\[
    \Lambda_{\mathcal S,p_1,p_2}(f_1,f_2)
    :=\sum_{Q\in\mathcal S}
      |Q|\,\langle f_1\rangle_{p_1,Q}
            \langle f_2\rangle_{p_2,Q}.
\]
If \(T\) is a sublinear operator initially defined on bounded,
compactly supported functions, its sparse \((p_1,p_2)\)-norm,
\(\|T\|_{(p_1,p_2)\text{-}\mathrm{sparse}}\), is the least constant \(C\)
with the following property: for every pair of bounded, compactly
supported functions \(f_1,f_2\), there exists a \(1/2\)-sparse collection
\(\mathcal S=\mathcal S(f_1,f_2)\) such that
\[
    |\langle Tf_1,f_2\rangle|
    \leq C\,\Lambda_{\mathcal S,p_1,p_2}(f_1,f_2).
\]

We also record the precise Lorentz--Zygmund functional used below. Let
\(1<q<\infty\), let \(\Omega\) be measurable on \(S^{d-1}\), and
define its distribution function by
\[
    \mu_\Omega(t)
    :=\sigma\bigl(\{\theta\in S^{d-1}:
                         |\Omega(\theta)|>t\}\bigr),
    \qquad t>0.
\]
Set
\[
    L_q(\Omega)
    :=q\int_0^\infty \mu_\Omega(t)^{1/q}\,dt
     =\|\Omega\|_{L^{q,1}(S^{d-1})}.
\]
For $0<L_q(\Omega)<\infty$, define
\begin{equation}\label{eq:Lq1logL-functional}
    \|\Omega\|_{L^{q,1}\log L(S^{d-1})}
    :=q\int_0^\infty
       \log\left(e+\frac{t}{L_q(\Omega)}\right)
       \mu_\Omega(t)^{1/q}\,dt.
\end{equation}
Set this quantity equal to zero when $\Omega=0$ almost
everywhere, and to $+\infty$ when $L_q(\Omega)=+\infty$.
We write $\Omega\in L^{q,1}\log L(S^{d-1})$ when this
functional is finite.

Let \(\Omega^\ast\) denote the decreasing rearrangement of
\(|\Omega|\) on \((0,1)\). For \(0<L_q(\Omega)<\infty\), layer cake
and the standard rearrangement characterization give
\[
\begin{aligned}
    \|\Omega\|_{L^{q,1}\log L(S^{d-1})}
    &\simeq
    \int_0^1 t^{1/q-1}\Omega^\ast(t)
       \log\left(e+\frac{\Omega^\ast(t)}{L_q(\Omega)}\right)\,dt\\
    &\simeq_q
    \int_0^1 t^{1/q-1}\log(e/t)\Omega^\ast(t)\,dt.
\end{aligned}
\]
Indeed, \(qt^{1/q}\Omega^\ast(t)\leq L_q(\Omega)\) gives one
direction of the second comparison. For the reverse direction,
split according to
\(\Omega^\ast(t)>L_q(\Omega)t^{-1/(2q)}\).
On the complementary set, the last integral is bounded by
\[
    L_q(\Omega)\int_0^1t^{1/(2q)-1}\log(e/t)\,dt
    \lesssim_q L_q(\Omega)
    \leq \|\Omega\|_{L^{q,1}\log L(S^{d-1})}.
\]
In particular, suppose that
\(\Omega\in L^{q,1}\log L(S^{d-1})\), and set
\[
    \Omega_N^{\mathrm{tail}}
    :=\Omega\mathbf1_{\{|\Omega|>N\}}.
\]
Since
\[
    \sigma\bigl(\{|\Omega_N^{\mathrm{tail}}|>0\}\bigr)
    =\mu_\Omega(N)\longrightarrow0,
\]
we have, for every \(0<t<1\),
\[
    (\Omega_N^{\mathrm{tail}})^\ast(t)
    \leq\Omega^\ast(t),
    \qquad
    (\Omega_N^{\mathrm{tail}})^\ast(t)\longrightarrow0.
\]
By dominated convergence and the preceding norm equivalence,
\[
    \|\Omega_N^{\mathrm{tail}}\|_{L^{q,1}\log L(S^{d-1})}
    \longrightarrow0
    \qquad(N\to\infty).
\]
\subsection{Translated cube families}\label{subsec:shifted-dyadic-grids}

Let \(\mathcal D\) denote the standard dyadic grid in \(\mathbb R^d\).
For \(\vec w\in\{0,\tfrac12\}^d\), define the family of translated cubes
\[
    \mathcal D^{\vec w}
    :=
    \left\{
      2^k\vec w+
      \prod_{j=1}^d[m_j2^k,(m_j+1)2^k):
      k\in\mathbb Z,\ (m_1,\ldots,m_d)\in\mathbb Z^d
    \right\}.
\]
The collection $\mathcal D^{\vec w}$ need not be nested when
$\vec w\ne0$. We use it as a family of translated cubes at each
scale. Its children, however, are standard dyadic cubes.
Indeed, indexing a child by $\varepsilon\in\{0,1\}^d$, we have
\[
    K^\varepsilon
    =2^{k-1}\bigl(2m+2\vec w+\varepsilon+[0,1)^d\bigr)
    \in\mathcal D,
\]
because $2m+2\vec w+\varepsilon\in\mathbb Z^d$.
In particular, for each fixed child index $\iota$,
any two intersecting members of
$\{K^\iota:K\in\mathcal D^{\vec w}\}$
are comparable by inclusion.
At every fixed scale $k\in\mathbb Z$, these $2^d$ translated families
satisfy the partition identity
\begin{equation}\label{eq:shifted-grid-partition}
    \sum_{\vec w\in\{0,\frac12\}^d}
    \ \sum_{\substack{K\in\mathcal D^{\vec w}\\ s_K=k}}
    \mathbf 1_{\frac12 K}=1
    \qquad\text{almost everywhere on }\mathbb R^d.
\end{equation}
This identity will be used in Section~\ref{sec:sparse-domination} to
localize the dyadic kernel pieces in physical space.

\subsection{Stopping collections and localized spaces}
\label{subsec:stopping-collections}

We recall the localized stopping-time framework of
Di Plinio, Hyt\"onen, and Li~\cite[Section~2]{MR4245601}. Let \(Q_0\) be a
dyadic cube. A \emph{stopping collection with top \(Q_0\)} is a family
\(\mathcal Q\) of dyadic cubes satisfying the following Whitney-type
conditions. Define
\[
    \operatorname{sh}\mathcal Q
    :=\bigcup_{L\in\mathcal Q}L,
    \qquad
    \mathrm c\mathcal Q
    :=\{L\in\mathcal Q:3L\cap 2Q_0\neq\emptyset\}.
\]
Then
\begin{equation}\label{eq:stopping-shadow}
    \bigcup_{L\in\mathrm c\mathcal Q}9L
    \subset \operatorname{sh}\mathcal Q
    \subset 3Q_0,
\end{equation}
\begin{equation}\label{eq:stopping-disjointness}
    L,L'\in\mathcal Q,\quad L\cap L'\neq\emptyset
    \quad\Longrightarrow\quad L=L',
\end{equation}
and
\begin{equation}\label{eq:stopping-neighbours}
    L'\in N(L)
    \quad\Longrightarrow\quad
    |s_L-s_{L'}|\leq 8,
    \qquad
    N(L):=\{L'\in\mathcal Q:3L\cap 3L'\neq\emptyset\}.
\end{equation}
In particular, \(\#N(L)\lesssim_d 1\) uniformly in \(L\in\mathcal Q\).
Moreover, by \eqref{eq:stopping-disjointness} and
\eqref{eq:stopping-shadow},
\begin{equation}\label{eq:stopping-packing}
    \sum_{L\in\mathcal Q}|L|
    =|\operatorname{sh}\mathcal Q|
    \leq 3^d|Q_0|.
\end{equation}

We next define the localized spaces associated with \(\mathcal Q\). For
\(1\leq p\leq\infty\), let \(\mathcal Y_p(\mathcal Q)\) be the space of measurable
functions supported in \(3Q_0\) for which
\begin{equation}\label{eq:Yp-norm}
    \|f\|_{\mathcal{Y}_p(\mathcal Q)}
    :=
    \begin{cases}
    \displaystyle
    \max\left\{
      \|f\mathbf 1_{\mathbb R^d\setminus\operatorname{sh}\mathcal Q}\|_\infty,
      \ \sup_{L\in\mathcal Q}\inf_{x\in\widehat L}M_pf(x)
    \right\},
       & 1\leq p<\infty,\\[8pt]
    \|f\|_\infty,
       & p=\infty.
    \end{cases}
\end{equation}
Here \(\widehat L:=2^5L\) is the non-dyadic \(2^5\)-fold dilation of
\(L\).

The atomic space \(\mathcal{X}_p(\mathcal Q)\) is the subspace of
\(\mathcal{Y}_p(\mathcal Q)\) consisting of functions that admit a decomposition
\[
    b=\sum_{L\in\mathcal Q}b_L,
    \qquad
    \operatorname{supp}b_L\subset L.
\]
We write \(b\in\dot {\mathcal{X}}_p(\mathcal Q)\) if, in addition,
\[ \int_L b_L(x)\,dx=0
    \qquad\text{for every }L\in\mathcal Q.
\]
Both \(\mathcal{X}_p(\mathcal Q)\) and \(\dot{\mathcal{X}}_p(\mathcal Q)\) are equipped with
the norm inherited from \(\mathcal{Y}_p(\mathcal Q)\). When the stopping collection
is clear from the context, we abbreviate these norms by
\(\|\cdot\|_{\mathcal{Y}_p}\), \(\|\cdot\|_{\mathcal{X}_p}\), and
\(\|\cdot\|_{\dot{\mathcal{X}}_p}\).

We will repeatedly use the following immediate consequence of
\eqref{eq:Yp-norm}: for every \(L\in\mathcal Q\),
\begin{equation}\label{eq:atomic-size}
    \|b_L\|_{L^p}
    \lesssim_d
    |L|^{1/p}\|b\|_{\mathcal{X}_p(\mathcal Q)},
    \qquad 1\leq p<\infty.
\end{equation}
Indeed, for each \(x\in\widehat L\), one may choose a cube containing
both \(x\) and \(L\) whose measure is comparable to \(|L|\), and then
apply the definition of \(M_p\). In particular,
\[
    \|b_L\|_{L^1}
    \lesssim_d |L|\,\|b\|_{\dot {\mathcal{X}}_1(\mathcal Q)}
    \qquad\text{for }b\in\dot {\mathcal{X}}_1(\mathcal Q).
\]

\subsection{The Rademacher--Menshov inequality}
\label{subsec:rademacher-menshov}
We use the following $L^2$ estimate;
see \cite[Theorem~10.6]{MR2403711}.

\begin{lem}[Rademacher--Menshov]\label{lem:rademacher-menshov}
Let \((X,\mu)\) be a measure space, and let
\(f_1,\ldots,f_N\in L^2(X)\). Suppose that
\[
    \left\|\sum_{j=1}^N\varepsilon_jf_j\right\|_{L^2(X)}
    \leq B
    \qquad
    \text{for every }(\varepsilon_1,\ldots,\varepsilon_N)
    \in\{-1,1\}^N.
\]
Then
\[
    \left\|
      \max_{0\leq M\leq N}
      \left|\sum_{j=1}^Mf_j\right|
    \right\|_{L^2(X)}
    \leq C B\log(2+N),
\]
where \(C\) is an absolute constant.
\end{lem}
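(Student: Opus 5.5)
We will prove Lemma~\ref{lem:rademacher-menshov} by the classical dyadic-block argument. The hypothesis is used only for sign patterns that are constant on dyadic blocks of indices. By padding with $f_j:=0$ for $N<j\leq 2^m$, where $m:=\lceil\log_2(N+1)\rceil$, we may assume $N=2^m$. Padding does not change the hypothesis, since the added terms vanish. It also does not change the maximal function, and $m+1\lesssim\log(2+N)$. For $0\leq k\leq m$, let $\mathcal I_k$ be the family of dyadic index intervals
\[
    I=\{(i-1)2^k+1,\ldots,i2^k\},\qquad 1\leq i\leq 2^{m-k},
\]
and put $S_I:=\sum_{j\in I}f_j$.

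\textbf{Step 1 (dyadic decomposition of partial sums).} Fix $1\leq M\leq N$. Using the binary expansion of $M$, the set $\{1,\ldots,M\}$ is a disjoint union of intervals $I_k\in\mathcal I_k$, with at most one interval from each level $k$. The case $M=0$ is trivial. Hence, pointwise,
\[
    \max_{0\leq M\leq N}\Bigl|\sum_{j=1}^Mf_j\Bigr|
    \leq\sum_{k=0}^m\max_{I\in\mathcal I_k}|S_I|
    \leq\sum_{k=0}^m\Bigl(\sum_{I\in\mathcal I_k}|S_I|^2\Bigr)^{1/2}.
\]
By Minkowski's inequality in $L^2(X)$, the $L^2$ norm of the left side is at most $\sum_{k=0}^m\bigl(\sum_{I\in\mathcal I_k}\|S_I\|_{L^2}^2\bigr)^{1/2}$.

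\textbf{Step 2 (level-wise square function via random signs).} Fix $k$, and let $(\varepsilon_I)_{I\in\mathcal I_k}$ be independent Rademacher signs. Set $\varepsilon_j:=\varepsilon_I$ for $j\in I$. This is an admissible sign sequence in $\{-1,1\}^N$. Averaging the hypothesis over these signs and using orthogonality $\mathbb E\,\varepsilon_I\varepsilon_{I'}=\delta_{II'}$ gives, by Fubini,
\[
    \sum_{I\in\mathcal I_k}\|S_I\|_{L^2(X)}^2
    =\mathbb E\Bigl\|\sum_{I\in\mathcal I_k}\varepsilon_IS_I\Bigr\|_{L^2(X)}^2
    \leq B^2.
\]
In the complex-valued case we expand $|\cdot|^2$ with conjugates, and the same orthogonality applies.

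\textbf{Step 3 (conclusion).} Combining the two steps gives the bound $(m+1)B\leq CB\log(2+N)$, which is the claim. This crude bound, with one factor of $B$ per level, already gives the stated logarithm. No use of Cauchy--Schwarz across levels is needed. Cauchy--Schwarz would give only $(m+1)^{1/2}\cdot(m+1)^{1/2}B$, which is the same bound.

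No step is a real obstacle. The only point that needs care is Step~2. The hypothesis must be applied to block-constant sign patterns, and the expectation must be interchanged with the $L^2(X)$ integral. That interchange is legitimate because the sum is finite and each $f_j\in L^2$.
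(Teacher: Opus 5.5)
Your proof is correct. Padding to $N=2^m$, splitting $\{1,\dots,M\}$ into at most one dyadic block per level, and averaging the hypothesis over block-constant random signs to get $\sum_{I\in\mathcal I_k}\|S_I\|_{L^2}^2\le B^2$ gives the bound $(m+1)B\lesssim B\log(2+N)$. The paper does not prove this lemma; it cites \cite[Theorem~10.6]{MR2403711}. Your argument is the standard dyadic-block proof of that result, with orthogonality replaced by the sign-averaging step, so nothing is missing.
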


\section{Sparse domination for bounded angular kernels}
\label{sec:sparse-domination}

In this section, we establish the sparse domination estimate for bounded
angular kernels. The proof has two main ingredients: the abstract sparse
domination principle of Di Plinio, Hyt\"onen, and Li
\cite{MR4245601}, and the physical-space linearization introduced by Lai
\cite{lai2025weak11estimatemaximal}. We first reduce the continuous maximal
truncation to a dyadic model and then place that model in the abstract
framework. 
The remainder of the section is devoted to the localized
estimates required by the sparse domination principle.
\subsection{Dyadic reduction and the abstract sparse domination principle}
\label{subsec:dyadic-abstract-reduction}

Throughout this section, let \(d\geq2\) and let
\(\Omega\in L^\infty(S^{d-1})\) satisfy the hypotheses of
Theorem~\ref{thm:main}. We now reduce the standard maximal truncation
\(T_\Omega^\ast\) to a dyadic model.
Choose a nonnegative radial function
\(\psi\in C_c^\infty(\mathbb R^d)\), supported in
\(\{2^{-4}<|x|<2^{-2}\}\), such that
\[
    \sum_{k\in\mathbb Z}\psi(2^{-k}x)=1,
    \qquad x\neq 0.
\]
For $k\in\mathbb Z$, set
\[
K_k^\Omega(z)
:=
\psi(2^{-k}z)\frac{\Omega(z/|z|)}{|z|^d},
\qquad
T_k^\Omega f:=K_k^\Omega*f.
\]
Since $\Omega$ is fixed throughout this section, we abbreviate
\[
K_k:=K_k^\Omega,
\qquad
T_k:=T_k^\Omega.
\]
We then define the dyadic maximal truncation by
\[
    T_{\Omega,\ast}f(x)
    :=
    \sup_{m<n}
    \left|
      \sum_{m<k\leq n}T_k f(x)
    \right|.
\]
The discrepancy between an arbitrary radial truncation and a dyadic one is
controlled by the rough maximal operator
\[
    M_\Omega f(x)
    :=
    \sup_{\rho>0}\frac{1}{\rho^d}
    \int_{|x-y|<\rho}
       \left|
         \Omega\bigl((x-y)/|x-y|\bigr)
       \right|
       |f(y)|\,dy.
\]
More precisely,
\[
\begin{aligned}
     T_{\Omega}^{\ast}f(x)
    \lesssim_d
    T_{\Omega,\ast}f(x)&+M_\Omega f(x),
    \qquad
    T_{\Omega,\ast}f(x)
    \lesssim_d
    T_{\Omega}^{\ast}f(x)+M_\Omega f(x),\\
   & M_\Omega f(x)
    \lesssim_d
    \|\Omega\|_{L^\infty(S^{d-1})}Mf(x).
\end{aligned}
\]
The first two comparisons are geometric and remain valid for every
\(\Omega\in L^1(S^{d-1})\); boundedness is used only in the third.
Since the Hardy--Littlewood maximal operator admits a sparse
\((1,1)\)-bound, it remains to estimate \(T_{\Omega,\ast}\).

We now recall the abstract kernel framework from \cite{MR4245601}. Let
\([K]=\{K_s:s\in\mathbb Z\}\) be a family of measurable kernels on
\(\mathbb R^d\times\mathbb R^d\) satisfying
\begin{equation}\label{equ3.1}
\begin{aligned}
    \operatorname{supp}K_s
    &\subset
    \bigl\{(x,y)\in\mathbb R^d\times\mathbb R^d:
           |x-y|<2^s\bigr\},\\
    \|[K]\|
    &:=
    \sup_{s\in\mathbb Z}2^{sd}
    \sup_{x\in\mathbb R^d}
    \left(
      \|K_s(x,\cdot)\|_\infty
      +
      \|K_s(\cdot,x)\|_\infty
    \right)
    <\infty.
\end{aligned}
\end{equation}
Associated with \([K]\) are the truncated operators
\[
    T[K]f(x,t_1,t_2)
    :=
    \sum_{t_1<s\leq t_2}
    \int_{\mathbb R^d}K_s(x,y)f(y)\,dy,
    \qquad
    x\in\mathbb R^d,\quad t_1,t_2\in\mathbb Z.
\]
All scale sums are understood to be empty when the upper
truncation parameter does not exceed the lower one.
The corresponding maximal truncations are
\[
\begin{aligned}
    T_{\star,t_1}^{t_2}[K]f(x)
    &:=
    \sup_{t_1\leq\tau_1\leq\tau_2\leq t_2}
    \bigl|T[K]f(x,\tau_1,\tau_2)\bigr|,\\
    T_\star[K]f(x)
    &:=
    \sup_{t_1\leq t_2}
    \bigl|T[K]f(x,t_1,t_2)\bigr|.
\end{aligned}
\]
We assume that, for some \(1<r<\infty\),
\begin{equation}\label{equ3.3}
    \|[K]\|_{r,\star}
    :=
    \|T_\star[K]\|_{L^r(\mathbb R^d)\to L^r(\mathbb R^d)}
    <\infty.
\end{equation}

For measurable integer-valued functions
\(t_1,t_2:\mathbb R^d\to\mathbb Z\) satisfying \(t_1\leq t_2\)
pointwise, define the linearized truncation
\[
    T[K]_{t_1}^{t_2}f(x)
    :=
    T[K]f\bigl(x,t_1(x),t_2(x)\bigr),
    \qquad x\in\mathbb R^d.
\]
Let \(\mathcal Q\) be a stopping collection with top \(Q_0\), and let
\(t_1,t_2\) be bounded measurable integer-valued functions. The associated
localized truncated form is
\begin{equation}\label{equ3.5}
\begin{aligned}
    \mathcal Q[K]_{t_1}^{t_2}(f_1,f_2)
    :=
    \frac{1}{|Q_0|}
    \Bigg(
      \left\langle
        T[K]_{t_1}^{\,t_2\wedge s_{Q_0}}
        (f_1\mathbf 1_{Q_0}),
        f_2
      \right\rangle
      -
      \sum_{\substack{L\in\mathcal Q\\L\subset Q_0}}
      \left\langle
        T[K]_{t_1}^{\,t_2\wedge s_L}
        (f_1\mathbf 1_L),
        f_2
      \right\rangle
    \Bigg),
\end{aligned}
\end{equation}
where \(t_2\wedge s_L\) denotes the pointwise minimum of \(t_2\) and
the constant function \(s_L\). The second term in \eqref{equ3.5} removes
the contributions localized below the stopping scales. Moreover, the
support condition in \eqref{equ3.1} implies that
\[
    \mathcal Q[K]_{t_1}^{t_2}(f_1,f_2)
    =
    \mathcal Q[K]_{t_1}^{t_2}
    (f_1\mathbf 1_{Q_0},f_2\mathbf 1_{3Q_0}).
\]

In the abstract framework, we use the same symbol for the associated
two-variable kernel,
\[
    K_s^\Omega(x,y):=K_s^\Omega(x-y).
\]
Thus its meaning is determined by its arguments or by the occurrence
of convolution. With this convention, set
\[
    [K^\Omega]:=\{K_s^\Omega:s\in\mathbb Z\}.
\]
Then \eqref{equ3.1} holds with
$
\|[K^\Omega]\|
\lesssim_d
\|\Omega\|_{L^\infty(S^{d-1})},
$
and
$
T_\star[K^\Omega]=T_{\Omega,\ast}.
$
The required
\(L^r\)-boundedness in \eqref{equ3.3} follows from the classical maximal
rough singular integral estimate; see \cite{MR837527}.

The following result is the abstract sparse domination principle on which
our proof is based.

\begin{thm}[{\cite[Theorem~3.3]{MR4245601}}]\label{thm3.6}
Let \([K]=\{K_s:s\in\mathbb Z\}\) be a family of kernels satisfying
\eqref{equ3.1} and \eqref{equ3.3}. Suppose that
\(1\leq p_1,p_2<\infty\) and
\begin{equation}\label{equ3.6}
\begin{aligned}
    C_L[K](p_1,p_2)
    :=\sup_{\mathcal Q,t_1,t_2}
    \Bigg\{
    \sup_{\substack{
        \|b\|_{\dot{\mathcal X}_{p_1}(\mathcal Q)}=1\\
        \|f\|_{\mathcal Y_{p_2}(\mathcal Q)}=1}}
      \bigl|
        \mathcal Q[K]_{t_1}^{t_2}(b,f)
      \bigr|
    +
    \sup_{\substack{
        \|f\|_{\mathcal Y_\infty(\mathcal Q)}=1\\
        \|b\|_{\dot{\mathcal X}_{p_2}(\mathcal Q)}=1}}
      \bigl|
        \mathcal Q[K]_{t_1}^{t_2}(f,b)
      \bigr|
    \Bigg\}
    <\infty,
\end{aligned}
\end{equation}
where the outer supremum is taken over all stopping collections
\(\mathcal Q\) and all bounded measurable integer-valued truncation
functions \(t_1,t_2\). Then
\begin{equation}\label{equ3.7}
    \|T_\star[K]\|_{(p_1,p_2)\text{-}\mathrm{sparse}}
    \lesssim
    \|[K]\|_{r,\star}+C_L[K](p_1,p_2).
\end{equation}
The implicit constant in \eqref{equ3.7} is independent of
\(p_1\) and \(p_2\). In our application \(r=2\) is fixed.
\end{thm}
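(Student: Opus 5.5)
The plan is the iterative stopping-time argument of \cite{MR4245601}: each application of the hypothesis \eqref{equ3.6} on one stopping collection produces one sparse cube plus a residual that has the same structure at smaller cubes, and iterating gives the sparse family. Fix bounded, compactly supported $f_1,f_2$. First, by the support condition in \eqref{equ3.1}, a standard three-lattice argument, and monotone convergence, reduce to the following situation: $f_1$ is supported in a single dyadic cube $Q_0$, and only finitely many scales $s\leq s_{Q_0}$ contribute. Next linearize $T_\star[K]f_1$. Choose bounded measurable integer-valued $t_1\leq t_2$ so that $|T_\star[K]f_1|\leq 2|T[K]_{t_1}^{t_2}f_1|$ pointwise. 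Absorbing a unimodular factor into $f_2$, it then suffices to dominate $\langle T[K]_{t_1}^{t_2\wedge s_{Q_0}}(f_1\mathbf 1_{Q_0}),f_2\rangle$ by a sparse form. The key point is that $t_1,t_2$ are fixed once and for all. Since the supremum in \eqref{equ3.6} is taken over \emph{all} truncation functions, the same pair can be used at every stage of the recursion.

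At a given cube $Q_0$, let $E$ be the set of $x\in 3Q_0$ where $M_{p_1}(f_1\mathbf 1_{Q_0})(x)>C\langle f_1\rangle_{p_1,Q_0}$ or $M_{p_2}(f_2\mathbf 1_{3Q_0})(x)>C\langle f_2\rangle_{p_2,3Q_0}$. Let $\mathcal Q$ be a Whitney decomposition of $E$ into maximal dyadic cubes, shrunk so that \eqref{eq:stopping-shadow}--\eqref{eq:stopping-neighbours} hold. Taking $C=C_d$ large makes $|E|\leq 2^{-d-2}|Q_0|$.

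Then perform Calder\'on--Zygmund decompositions along $\mathcal Q$:
\[
f_1\mathbf 1_{Q_0}=g_1+b_1,\qquad f_2\mathbf 1_{3Q_0}=g_2+b_2 .
\]
Here $g_i$ equals $f_i$ off $\operatorname{sh}\mathcal Q$ and equals the average $\langle f_i\rangle_L$ on each $L\in\mathcal Q$, and $b_i=\sum_L (f_i-\langle f_i\rangle_L)\mathbf 1_L$. By construction the normalized parts satisfy
\[
\|g_1\|_{\mathcal Y_\infty}\lesssim\langle f_1\rangle_{p_1,Q_0},\quad
\|b_1\|_{\dot{\mathcal X}_{p_1}}\lesssim\langle f_1\rangle_{p_1,Q_0},\quad
\|g_2\|_{\mathcal Y_{p_2}}\lesssim\langle f_2\rangle_{p_2,3Q_0},\quad
\|b_2\|_{\dot{\mathcal X}_{p_2}}\lesssim\langle f_2\rangle_{p_2,3Q_0}.
\]
The inf over $\widehat L$ in \eqref{eq:Yp-norm} is controlled because $\widehat L$ meets the complement of $E$.

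Now expand the localized form \eqref{equ3.5} as
\[
\mathcal Q[K](f_1,f_2)=\mathcal Q[K](b_1,f_2)+\mathcal Q[K](g_1,g_2)+\mathcal Q[K](g_1,b_2).
\]
The first and third terms are bounded by $C_L[K](p_1,p_2)\langle f_1\rangle_{p_1,Q_0}\langle f_2\rangle_{p_2,3Q_0}$, directly from \eqref{equ3.6}. For the middle term, write it as a full truncation minus the localized pieces. Use \eqref{equ3.3} together with $\|g_i\|_{L^r}\lesssim |Q_0|^{1/r}\cdot(\text{average})$, which holds since $g_1\in L^\infty$ and, by H\"older, $g_2$ may be placed in $L^{r'}$ after one more truncation. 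For the pieces over $L\subset Q_0$, use \eqref{equ3.1} and $\sum_L|L|\leq 3^d|Q_0|$. This bounds the middle term by $\|[K]\|_{r,\star}$ times the same product of averages. If $p_2$ is large relative to $r'$, one instead splits $f_2$ once more at height $C\langle f_2\rangle_{p_2}$, so that only an $L^\infty$ piece meets the $L^r$ bound.

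Multiplying by $|Q_0|$ gives
\[
\bigl\langle T[K]_{t_1}^{t_2\wedge s_{Q_0}}(f_1\mathbf 1_{Q_0}),f_2\bigr\rangle
\leq C\,|Q_0|\,\langle f_1\rangle_{p_1,3Q_0}\langle f_2\rangle_{p_2,3Q_0}
+\sum_{L\in\mathcal Q,\,L\subset Q_0}\bigl\langle T[K]_{t_1}^{t_2\wedge s_L}(f_1\mathbf 1_L),f_2\bigr\rangle .
\]
Each summand on the right has the same form as the left-hand side, with $Q_0$ replaced by $L$. Iterate on each $L$. The collected cubes form a sparse family, with $E_{Q}=Q\setminus\operatorname{sh}\mathcal Q(Q)$ and $|E_Q|\geq\tfrac12|Q|$ by the smallness of $E$. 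Finiteness of scales ensures the recursion terminates. The triples $3Q$ are then moved to dyadic cubes in the translated families of \S\ref{subsec:shifted-dyadic-grids}. No step introduces $p$-dependence beyond what is hidden in $C_L$, which is why the constant in \eqref{equ3.7} is independent of $p_1,p_2$.

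I expect the main obstacle to be the middle term $\mathcal Q[K](g_1,g_2)$. The difficulty is that subtracting $\sum_L T[K]^{t_2\wedge s_L}(g_1\mathbf 1_L)$ must be controlled with only the size bound $\|[K]\|$ and the maximal $L^r$ bound. The cutoff at $t_2\wedge s_L$ means only scales $s\leq s_L$ act from $L$, each giving $O(\|[K]\|\,\|g_1\|_\infty)$ on $3L$, so the sum over scales is a potentially logarithmic loss. I would try to avoid it by using the full maximal bound \eqref{equ3.3} on $g_1\mathbf 1_L$ localized to $3L$, paired with $g_2\mathbf 1_{3L}$, and then summing via the bounded overlap \eqref{eq:stopping-neighbours}.
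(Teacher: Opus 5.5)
Your proposal is correct and is essentially the argument of \cite[Theorem~3.3]{MR4245601}, which the paper cites rather than reproves. That argument runs as follows: a Calder\'on--Zygmund decomposition along a stopping collection built from level sets of $M_{p_1}$ and $M_{p_2}$, with a threshold uniform in $p_1,p_2$; the testing constant $C_L[K](p_1,p_2)$ for the two terms containing a bad part; the maximal bound \eqref{equ3.3} for the good--good term; and iteration on the stopping cubes with the truncation pair fixed once and for all. The obstacle you flag is removed exactly as you suggest, via the $\mathcal Y_r\times\mathcal Y_{r'}$ estimate \cite[Lemma~4.1]{MR4245601}: apply $\|[K]\|_{r,\star}$ to $g_1\mathbf 1_L$ paired with $g_2\mathbf 1_{3L}$ and sum $|L|$. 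No further truncation of $f_2$ is needed, since $g_2$ is already bounded by $C\langle f_2\rangle_{p_2,3Q_0}$.
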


We apply Theorem~\ref{thm3.6} with \(p_1=1\) and \(p_2=p\), where
\(1<p<\infty\). For the rough kernel family
\([K]:=\{K_k:k\in\mathbb Z\}\) defined above, we suppress the kernel
from the notation and write
\[
    \mathcal Q_{t_1}^{t_2}
    :=
    \mathcal Q[K]_{t_1}^{t_2}.
\]
According to \eqref{equ3.6}, it is enough to establish the two localized testing estimates
\[
\bigl|\mathcal Q_{t_1}^{t_2}(b,h)\bigr|
\lesssim_d p'
\|\Omega\|_{L^\infty(S^{d-1})}
\|b\|_{\dot{\mathcal X}_1(\mathcal Q)}
\|h\|_{\mathcal Y_p(\mathcal Q)},
\]
and
\[
\bigl|\mathcal Q_{t_1}^{t_2}(h,b)\bigr|
\lesssim_d p'
\|\Omega\|_{L^\infty(S^{d-1})}
\|h\|_{\mathcal Y_\infty(\mathcal Q)}
\|b\|_{\dot{\mathcal X}_p(\mathcal Q)}.
\]
The first estimate is the main work of this section. 
For the second estimate, we combine the quantitative weak
$L^{2p'}$ bound from \cite[(1.9)]{MR4245601} with a
finite-measure embedding on the localized supports.
The details are given in
Section~\ref{subsec:completion-bounded-kernel-case}.

\subsection{Expansion of the localized form}
\label{subsec:localized-form-expansion}
Fix a stopping collection $\mathcal Q$ with top $Q_0$.
We first assume that only finitely many atoms are nonzero and that
the truncation functions are bounded. All estimates below are uniform
in these finite restrictions. For bounded angular kernels, passage
to general atomic sums follows from $L^1$ convergence of the atoms:
there are only finitely many kernel scales in a fixed linearization,
each kernel is bounded, and $h\in L^1(3Q_0)$.
The corresponding passage for the unbounded angular part follows
from the single-atom estimate \eqref{eq:bad-single-atom}.
By the support
properties of the localized form, an atom \(b_L\) with
\(L\nsubseteq Q_0\) does not contribute to
\(\mathcal Q_{t_1}^{t_2}(b,h)\). We may therefore write
\[
    b
    =
    \sum_{\substack{L\in\mathcal Q\\L\subset Q_0}}b_L
    \in\dot{\mathcal X}_1(\mathcal Q),
    \qquad
    h\in\mathcal Y_p(\mathcal Q),
\]
where
\[
    \operatorname{supp}b_L\subset L,
    \qquad
    \int_L b_L(x)\,dx=0.
\]
The atomic size estimate \eqref{eq:atomic-size} and the disjointness of
the stopping cubes give
\[
    \|b_L\|_{L^1}
    \lesssim_d \|b\|_{\dot{\mathcal X}_1(\mathcal Q)}|L|,
    \qquad
    \sum_{\substack{L\in\mathcal Q\\L\subset Q_0}}
       \|b_L\|_{L^1}
    \lesssim_d \|b\|_{\dot{\mathcal X}_1(\mathcal Q)}|Q_0|.
\]

We next rewrite the localized form in terms of the separation between
the kernel scale and the stopping scale. By the definition
\eqref{equ3.5},
\[
\begin{aligned}
    \mathcal Q_{t_1}^{t_2}(b,h)
    =
    \frac{1}{|Q_0|}
    \sum_{\substack{L\in\mathcal Q\\L\subset Q_0}}
    \int_{\mathbb R^d}
       h(x)
       \sum_{s_L<k\leq s_{Q_0}}
       \mathbf 1_{\{t_1(x)<k\leq t_2(x)\}}
       T_kb_L(x)\,dx.
\end{aligned}
\]
For \(j\in\mathbb Z\), define
\[
    b_j
    :=
    \sum_{\substack{L\in\mathcal Q\\
                    L\subset Q_0\\
                    s_L=j}}
    b_L.
\]
If \(s:=k-s_L\), then the restriction \(s_L<k\) is exactly the
condition \(s\geq1\). Hence
\[
\begin{aligned}
    \mathcal Q_{t_1}^{t_2}(b,h)
    =
    \frac{1}{|Q_0|}
    \sum_{s\geq1}
    \int_{\mathbb R^d}
       h(x)
       \sum_{k\leq s_{Q_0}}
       \mathbf 1_{\{t_1(x)<k\leq t_2(x)\}}
       T_kb_{k-s}(x)\,dx.
\end{aligned}
\]
Thus \(s\) measures the gap between the physical scale \(2^k\) of the
kernel and the scale \(2^{k-s}\) of the cancellation atoms. This is the
localized counterpart of the scale decomposition in Lai's weak
\((1,1)\) argument. It is important to distinguish the two families of
cubes that occur below: the cubes \(L\in\mathcal Q\) support the
mean-zero atoms \(b_L\), whereas the cubes \(K\) are introduced only to
localize the operator in physical space.

We now apply the partition identity
\eqref{eq:shifted-grid-partition}.  For
\(K\in\mathcal D^{\vec w}\) with \(s_K=k\), define
\[
    T_Kg(x)
    :=
    \int_{\mathbb R^d}
       K_k(x-y)g(y)\mathbf 1_{\frac12K}(y)\,dy.
\]
Then
\[
    T_kg(x)
    =
    \sum_{\vec w\in\{0,\frac12\}^d}
    \sum_{\substack{K\in\mathcal D^{\vec w}\\s_K=k}}
    T_Kg(x).
\]

Only cubes \(K\) satisfying
\[
    b_{k-s}\mathbf 1_{\frac12K}\neq0
\]
can occur. Since \(b_{k-s}\) is supported in \(Q_0\), such a cube
satisfies
\(\frac12K\cap Q_0\neq\emptyset\). Moreover,
\(k\leq s_{Q_0}\), and hence \(\ell(K)\leq\ell(Q_0)\). A direct
geometric comparison therefore gives
\[
    K\subset 3Q_0.
\]
For notational compatibility with the later construction, we set
\[
    \widetilde Q_0^{\vec w}:=3Q_0.
\]
The superscript only records the translated cube family under consideration; in
particular,
\[
    |\widetilde Q_0^{\vec w}|
    =3^d|Q_0|.
\]

We record two elementary geometric facts about this physical
localization. First, fix \(s\geq1\), \(k\in\mathbb Z\), and a 
translated cube family \(\mathcal D^{\vec w}\). Let \(K,K'\in
\mathcal D^{\vec w}\) be distinct cubes with
\(\ell(K)=\ell(K')=2^k\). Since the cubes of scale \(2^k\) tile
\(\mathbb R^d\), their concentric half-cubes are separated in at least
one coordinate by a gap of length at least \(2^{k-1}\); equivalently,
\[
    \operatorname{dist}_\infty
    \left(\frac12K,\frac12K'\right)
    \geq 2^{k-1}.
\]
On the other hand, every stopping cube \(L\) contributing to
\(b_{k-s}\) has
\[
    \ell(L)=2^{k-s}\leq2^{k-1}.
\]
Consequently, \(L\) cannot have a positive-measure intersection with
both \(\frac12K\) and \(\frac12K'\). In particular, for fixed
\(s,k,\vec w\), each stopping atom \(b_L\) can contribute to at most
one of the operators \(T_K\). 

Second, the localization by \(\frac12K\) forces the output of \(T_K\)
to remain in \(K\). Indeed, if the integrand defining \(T_Kb_L(x)\) is
nonzero, then \(y\in\frac12K\) and, by the support of \(\psi\),
\[
    |x-y|<2^{k-2}.
\]
Every point \(y\in \frac12 K\) has \(\ell^\infty\)-distance at least
\(2^{k-2}\) from \(K^c\).
It follows, up to the
irrelevant boundaries of the cubes, that
\begin{equation}\label{equ3.12}
    \operatorname{supp}(T_Kb_L)\subset K.
\end{equation}

We finish this preliminary localization by proving the packing estimate
used below. This estimate is the localized analogue of
\cite[Lemma~3.3]{lai2025weak11estimatemaximal}, but that lemma cannot be
invoked verbatim here. In Lai's argument the scale separation is very
large, so the associated cancellation cube is contained in
\(\frac12K\). Here \(s\) is allowed to equal \(1\), and this containment
need not hold. The preceding separation argument provides the required
substitute.

Fix \(s\geq1\) and \(\vec w\). For each cube
\(K\in\mathcal D^{\vec w}\) such that, with \(k=s_K\),
\[
    K\subset\widetilde Q_0^{\vec w},
    \qquad
    b_{k-s}\mathbf 1_{\frac12K}\neq0,
\]
choose one stopping cube \(L_K\in\mathcal Q\) satisfying
\[
    L_K\subset Q_0,
    \qquad
    s_{L_K}=k-s,
    \qquad
    b_{L_K}\mathbf 1_{\frac12K}\neq0.
\]
Such a cube exists by the definition of \(b_{k-s}\). We claim that the
cubes \(L_K\) selected in this way are all distinct. 
Indeed, suppose that \(L_K=L_{K'}\). Since \(s\) is fixed,
we obtain \(s_K=s_{K'}\). Thus \(K\) and \(K'\) have the same scale.
Moreover, the common stopping cube \(L_K=L_{K'}\) contributes to both
\(\frac12K\) and \(\frac12K'\). The uniqueness established above
therefore forces \(K=K'\). Hence \(K\mapsto L_K\) is injective.

Since
\[
    |K|=2^{ds}|L_K|,
\]
and since the stopping cubes \(L_K\) are pairwise disjoint and contained
in \(Q_0\), we obtain
\[
\begin{aligned}
    \sum_{\substack{
        K\in\mathcal D^{\vec w}\\
        K\subset\widetilde Q_0^{\vec w}\\
        b_{s_K-s}\mathbf 1_{\frac12K}\neq0}}
        |K|
    =
    2^{ds}
    \sum_{\substack{
        K\in\mathcal D^{\vec w}\\
        K\subset\widetilde Q_0^{\vec w}\\
        b_{s_K-s}\mathbf 1_{\frac12K}\neq0}}
        |L_K|
    \leq
    2^{ds}
    \sum_{\substack{L\in\mathcal Q\\L\subset Q_0}}|L|
    \leq
    2^{ds}|Q_0|.
\end{aligned}
\]
This is the required packing bound for the physical localization cubes.

\begin{defn}[$K^{\iota}$]\label{def3.12}
For \(K\in\mathcal D^{\vec w}\), bisect each side of \(K\), and denote the
resulting \(2^d\) congruent subcubes by
\[
    K^1,K^2,\ldots,K^{2^d}.
\]
We fix the indexing once and for all according to the relative position of
the subcube inside \(K\). Thus
\[
    K=\bigcup_{\iota=1}^{2^d}K^\iota
\]
up to boundaries. For each fixed \(\iota\), the family
\[
    \{K^\iota:K\in\mathcal D^{\vec w}\}
\]
is nested: any two of its members are either disjoint or one contains the
other.
\end{defn}

For fixed \(s\geq1\) and
\(\vec w\in\{0,\frac12\}^d\), let
\[
    \mathscr K_s^{\vec w}
    :=
    \left\{
      K\in\mathcal D^{\vec w}:
      K\subseteq\widetilde Q_0^{\vec w},\
      s_K\leq s_{Q_0},\
      b_{s_K-s}\mathbf 1_{\frac12K}\not\equiv0
    \right\}.
\]
Thus \(\mathscr K_s^{\vec w}\) is precisely the collection of physical
localization cubes that contribute for the fixed parameters \(s\) and
\(\vec w\).

The packing argument above also has the following local form: for every
cube \(R\) in the nested family associated with the fixed value of
\(\iota\),
\begin{equation}\label{eq:local-packing-K-iota}
    \sum_{\substack{
        K\in\mathscr K_s^{\vec w}\\
        K^\iota\subseteq R}}
        |K^\iota|
    \lesssim_d
    2^{ds}|R|.
\end{equation}
To prove \eqref{eq:local-packing-K-iota}, use the
injective assignment $K\mapsto L_K$ constructed above.
For $K^\iota\subseteq R$, we have
\[
    L_K\cap\tfrac12K\neq\emptyset,
    \qquad
    \ell(L_K)\leq\tfrac12\ell(K),
\]
and hence $L_K\subseteq\tfrac32K$.
Moreover,
\[
    \ell(K)\leq2\ell(R),
    \qquad
    |c_K-c_R|_\infty\leq\tfrac12\ell(R).
\]
The latter inequality follows by comparing the centers
of $K^\iota$, $K$, and $R$. Consequently,
\[
    \sup_{x\in L_K}|x-c_R|_\infty
    \leq\tfrac34\ell(K)+\tfrac12\ell(R)
    \leq2\ell(R),
\]
so $L_K\subseteq4R$.
Since the selected stopping cubes are pairwise disjoint,
\[
\begin{aligned}
    \sum_{\substack{
        K\in\mathscr K_s^{\vec w}\\K^\iota\subseteq R}}
      |K^\iota|
    =
    2^{d(s-1)}
    \sum_{\substack{
        K\in\mathscr K_s^{\vec w}\\K^\iota\subseteq R}}
      |L_K|
    \leq2^{d(s-1)}|4R|
    \lesssim_d2^{ds}|R|.
\end{aligned}
\]

Combining the scale reindexing, the partition
\eqref{eq:shifted-grid-partition}, and the support
property \eqref{equ3.12}, and then decomposing each $K$
into its children, we obtain
\begin{equation}\label{equ3.13}
\begin{aligned}
  \mathcal Q_{t_1}^{t_2}(b,h)
  &=
  \frac1{|Q_0|}
  \sum_{s\geq1}
  \sum_{\vec w\in\{0,\frac12\}^d}
  \sum_{K\in\mathscr K_s^{\vec w}}
  \int_{\mathbb R^d}
      \mathbf 1_{\{t_1(x)<s_K\leq t_2(x)\}}
      T_Kb_{s_K-s}(x)h(x)\,dx
  \\
  &=
  \frac1{|Q_0|}
  \sum_{s\geq1}
  \sum_{\vec w\in\{0,\frac12\}^d}
  \sum_{\iota=1}^{2^d}
  \sum_{K\in\mathscr K_s^{\vec w}}
  \int_{\mathbb R^d}
      \mathbf 1_{\{t_1(x)<s_K\leq t_2(x)\}}
      T_Kb_{s_K-s}(x)
      \mathbf 1_{K^\iota}(x)h(x)\,dx .
\end{aligned}
\end{equation}

\subsection{Occupancy decomposition and the Rademacher--Menshov reduction}
\label{subsec:occupancy-rademacher-menshov}

We next organize the cubes according to their local overlap depth. This
filtration is adapted from the construction in
\cite{lai2025weak11estimatemaximal}, but we formulate it directly in terms
of the occupancy of the nested family \(\{K^\iota\}\). 
For the quantitative argument it is enough to consider
\(1<p\leq2\), so that \(p'\geq2\); the range \(p\geq2\) will be
recovered from the estimate at \(p=2\). Fix \(s\), \(\vec w\), and
\(\iota\), and suppress the dependence on \(\vec w\) in the notation.

Let \(A_d\geq1\) be a dimensional integer dominating the constants
in the global and local packing estimates used below. We choose
\[
    u_0:=4A_d\,2^{ds}.
\]
This choice gives a contraction factor at most \(1/4\) at each
occupancy generation and
\[
    \log(2+u_0)\lesssim_d 1+s.
\]
These bounds suffice for the quantitative argument in
Section~\ref{subsec:completion-bounded-kernel-case}.

\begin{defn}[$F_{s,\iota}^n$]\label{def:F-s-iota-n}
Set \(F_{s,\iota}^0:=\widetilde Q_0^{\vec w}\), and define
\[
    F_{s,\iota}^1
    :=
    \left\{
      x\in\mathbb R^d:
      \sum_{K\in\mathscr K_s^{\vec w}}
          \mathbf 1_{K^\iota}(x)>u_0
    \right\}.
\]
Recursively, for \(n\geq2\), let
\[
    F_{s,\iota}^n
    :=
    \left\{
      x\in\mathbb R^d:
      \sum_{\substack{K\in\mathscr K_s^{\vec w}\\
                      K^\iota\subseteq F_{s,\iota}^{n-1}}}
          \mathbf 1_{K^\iota}(x)>u_0
    \right\}.
\]
\end{defn}

For a fixed point \(x\), all cubes \(K^\iota\) containing \(x\) form a
chain. Thus \(F_{s,\iota}^1\) consists of the points at which the original
chain has length greater than \(u_0\), while
\(F_{s,\iota}^n\) records the points at which more than \(u_0\) members of
the chain remain after restricting to cubes contained in
\(F_{s,\iota}^{n-1}\). In particular,
\[
    F_{s,\iota}^0\supseteq F_{s,\iota}^1
    \supseteq F_{s,\iota}^2\supseteq\cdots.
\]

The following decay estimate is the localized counterpart of
\cite[Lemma~3.6]{lai2025weak11estimatemaximal} and follows from the
global packing bound above together with its local form
\eqref{eq:local-packing-K-iota}.
\begin{lem}\label{lem:F-s-iota-measure}
For every \(n\geq1\) and \(1\leq\iota\leq2^d\),
\begin{equation}\label{eq:F-s-iota-measure}
    |F_{s,\iota}^n|
    \lesssim_d
    2^{-2n}|Q_0|.
\end{equation}
\end{lem}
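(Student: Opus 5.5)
We argue by induction on \(n\), proving the stronger statement
\[
    |F_{s,\iota}^n|\leq 4^{-n}|\widetilde Q_0^{\vec w}| = 3^d\,4^{-n}|Q_0|,
\]
which implies \eqref{eq:F-s-iota-measure}. The mechanism at each step is a Chebyshev-type bound for the counting function of the nested family \(\{K^\iota\}\), combined with the packing estimates. The base case \(n=1\) uses the global packing bound. Since each \(K^\iota\) has measure \(2^{-d}|K|\), Chebyshev gives
\[
    |F_{s,\iota}^1|
    \leq \frac1{u_0}\int\sum_{K\in\mathscr K_s^{\vec w}}\mathbf 1_{K^\iota}
    =\frac1{u_0}\sum_{K\in\mathscr K_s^{\vec w}}|K^\iota|
    \leq \frac{2^{-d}2^{ds}|Q_0|}{4A_d2^{ds}}
    \leq \tfrac14|Q_0|.
\]
This is at most \(4^{-1}\cdot 3^d|Q_0|\).

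For the inductive step, the plan is to exploit the nested structure of \(\{K^\iota:K\in\mathcal D^{\vec w}\}\) for fixed \(\iota\) (Definition~\ref{def3.12}). Let \(\mathcal R^{n-1}\) be the family of maximal cubes \(K^\iota\), \(K\in\mathscr K_s^{\vec w}\), with \(K^\iota\subseteq F_{s,\iota}^{n-1}\).

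\emph{Step 1 (disjointness).} By nestedness, the cubes in \(\mathcal R^{n-1}\) are pairwise disjoint.

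\emph{Step 2 (containment).} Their union lies in \(F_{s,\iota}^{n-1}\). Moreover, every \(K^\iota\) counted in the definition of \(F^n_{s,\iota}\) is contained in some \(R\in\mathcal R^{n-1}\).

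\emph{Step 3 (local Chebyshev).} Applying Chebyshev inside each \(R\) together with the local packing \eqref{eq:local-packing-K-iota} gives
\[
    |F_{s,\iota}^n\cap R|
    \leq \frac1{u_0}\sum_{\substack{K\in\mathscr K_s^{\vec w}\\K^\iota\subseteq R}}|K^\iota|
    \leq \frac{A_d2^{ds}}{4A_d2^{ds}}|R|=\tfrac14|R|.
\]

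\emph{Step 4 (summing).} Since \(F_{s,\iota}^n\) is contained in the union of the cubes \(R\in\mathcal R^{n-1}\), summing over this disjoint family yields
\[
    |F_{s,\iota}^n|\leq\tfrac14\sum_{R\in\mathcal R^{n-1}}|R|\leq\tfrac14|F_{s,\iota}^{n-1}|.
\]
This closes the induction.

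The main point requiring care is the existence of maximal elements in \(\mathcal R^{n-1}\). The family \(\mathscr K_s^{\vec w}\) has bounded scales, since \(s_K\leq s_{Q_0}\), so every chain has a largest member. Under the finiteness reduction (finitely many nonzero atoms), the family \(\mathscr K_s^{\vec w}\) is finite, which removes any remaining issue.

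A second point to check is that the constant \(A_d\) is indeed chosen to dominate the implicit constant in \eqref{eq:local-packing-K-iota}. If instead one only had the local packing with \(|R|\) replaced by \(|4R|\), the maximal cubes \(R\) would no longer be disjoint after dilation. Here, however, the local packing is already stated with \(|R|\) on the right-hand side, so disjointness of \(\mathcal R^{n-1}\) suffices.
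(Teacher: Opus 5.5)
Your proof is correct and follows the paper's argument. For $n=1$ you use Chebyshev with the global packing bound. For $n\geq2$ you apply Chebyshev on each of the pairwise disjoint maximal cubes $K^\iota\subseteq F_{s,\iota}^{n-1}$ and use the local packing estimate \eqref{eq:local-packing-K-iota} to get $|F_{s,\iota}^n|\leq\frac14|F_{s,\iota}^{n-1}|$. Your added checks are accurate details the paper leaves implicit: that maximal elements exist, and that $|R|$ rather than $|4R|$ on the right-hand side of the local packing is what allows summation over a disjoint family.
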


\begin{proof}
For \(n=1\), Chebyshev's inequality and the global packing estimate
give
\[
\begin{aligned}
    |F_{s,\iota}^1|
    \leq
    \frac1{u_0}
    \sum_{K\in\mathscr K_s^{\vec w}}|K^\iota|
    \leq
    \frac14|Q_0|
    \leq
    2^{-2}|Q_0|.
\end{aligned}
\]
For \(n\geq2\), decompose \(F_{s,\iota}^{n-1}\), up to a null set,
into its pairwise disjoint maximal components
\(\{R_\alpha\}_\alpha\) from the nested family \(\{K^\iota\}\).
Chebyshev's inequality and the local packing estimate yield
\[
\begin{aligned}
    |F_{s,\iota}^n|
    &\leq
    \frac1{u_0}
    \sum_\alpha
    \sum_{\substack{K\in\mathscr K_s^{\vec w}\\
                    K^\iota\subset R_\alpha}}
       |K^\iota|
    \leq
    \frac14
    \sum_\alpha|R_\alpha|
    \leq
    2^{-2}|F_{s,\iota}^{n-1}|.
\end{aligned}
\]
Iteration proves
\[
    |F_{s,\iota}^n|
    \lesssim_d
    2^{-2n}|Q_0|.
\]
\end{proof}

We use the first level at which a cube is no longer fully contained in the
occupancy set to partition \(\mathscr K_s^{\vec w}\).

\begin{defn}[$\mathcal I_{s,\iota}^{\sharp,n}$]
\label{def:I-s-iota-sharp}
For \(n=1\), set
\[
    \mathcal I_{s,\iota}^{\sharp,1}
    :=
    \left\{
      K\in\mathscr K_s^{\vec w}:
      K^\iota\nsubseteq F_{s,\iota}^1
    \right\}.
\]
For \(n\geq2\), set
\[
    \mathcal I_{s,\iota}^{\sharp,n}
    :=
    \left\{
      K\in\mathscr K_s^{\vec w}:
      K^\iota\subseteq F_{s,\iota}^{n-1}
      \ \text{and}\
      K^\iota\nsubseteq F_{s,\iota}^{n}
    \right\}.
\]
\end{defn}

Since \(|F_{s,\iota}^n|\to0\), no cube of positive measure can be
contained in every \(F_{s,\iota}^n\). Hence each
\(K\in\mathscr K_s^{\vec w}\) belongs to exactly one
\(\mathcal I_{s,\iota}^{\sharp,n}\). Using \eqref{equ3.12} once more, we
therefore obtain
\[
\begin{aligned}
    \sum_{K\in\mathscr K_s^{\vec w}}T_Kb_{s_K-s}(x)
    &=
    \sum_{\iota=1}^{2^d}
    \sum_{K\in\mathscr K_s^{\vec w}}
        T_Kb_{s_K-s}(x)\mathbf 1_{K^\iota}(x)
    \\
    &=
    \sum_{\iota=1}^{2^d}
    \sum_{n\geq1}
    \sum_{K\in\mathcal I_{s,\iota}^{\sharp,n}}
        T_Kb_{s_K-s}(x)\mathbf 1_{K^\iota}(x)
\end{aligned}
\]
for almost every \(x\). Consequently, \eqref{equ3.13} becomes
\[
\begin{aligned}
  \mathcal Q_{t_1}^{t_2}(b,h)
  &=
  \frac1{|Q_0|}
  \sum_{s\geq1}
  \sum_{\vec w\in\{0,\frac12\}^d}
  \sum_{\iota=1}^{2^d}
  \sum_{n\geq1}
  \int_{\mathbb R^d} h(x)\\
  &\qquad\times
  \sum_{K\in\mathcal I_{s,\iota}^{\sharp,n}}
      \mathbf1_{\{t_1(x)<s_K\leq t_2(x)\}}
      T_Kb_{s_K-s}(x)
      \mathbf1_{K^\iota}(x)\,dx .
\end{aligned}
\]
It remains to separate the geometric organization of the cubes from the
\(x\)-dependent truncation. Fix \(s\), \(\vec w\), \(\iota\), and \(n\),
and write
\[
    \mathscr L_{s,\iota}^{\sharp,n}
    :=
    \{K^\iota:K\in\mathcal I_{s,\iota}^{\sharp,n}\}.
\]
For every \(K^\iota\in\mathscr L_{s,\iota}^{\sharp,n}\),
there are at most \(u_0\) members of
\(\mathscr L_{s,\iota}^{\sharp,n}\) containing \(K^\iota\),
including \(K^\iota\) itself. 
To see this, take \(K\in\mathcal I_{s,\iota}^{\sharp,n}\) and
choose
\[
    x_K\in K^\iota\setminus F_{s,\iota}^n.
\]
Every such cube \(J^\iota\) contains \(x_K\) and is contained in
\(F_{s,\iota}^{n-1}\). If there were more than \(u_0\) such ancestors,
the definition of \(F_{s,\iota}^n\) would imply
\(x_K\in F_{s,\iota}^n\), a contradiction.

We now peel the family from the top. Let
\[
    \mathscr L_1^n
    :=
    \max\mathscr L_{s,\iota}^{\sharp,n},
\]
where \(\max\) denotes the collection of maximal cubes, and define
recursively
\[
    \mathscr L_{u+1}^n
    :=
    \max\left(
      \mathscr L_{s,\iota}^{\sharp,n}
      \setminus\bigcup_{r=1}^{u}\mathscr L_r^n
    \right).
\]
Each \(\mathscr L_u^n\) is pairwise disjoint. The ancestor bound shows that
\(\mathscr L_u^n=\emptyset\) for \(u>u_0\). For
\(1\leq u\leq u_0\), set
\[
    \mathcal M_u^n
    :=
    \{K\in\mathcal I_{s,\iota}^{\sharp,n}:K^\iota\in\mathscr L_u^n\},
    \qquad
    \beta_u(x)
    :=
    \sum_{K\in\mathcal M_u^n}
       T_Kb_{s_K-s}(x)\mathbf 1_{K^\iota}(x).
\]
The functions \(\beta_u\) depend on the fixed parameters
\((s,\vec w,\iota,n)\), but they do not depend on \(t_1\) or \(t_2\).

We finally explain rigorously how the \(x\)-dependence of the truncation
indices is transferred to the endpoints of the sequence
\(\{\beta_u\}_{u=1}^{u_0}\). Because the cubes in each
\(\mathscr L_u^n\) are disjoint, for fixed \(x\) there is at most one
\(K\in\mathcal M_u^n\) such that \(x\in K^\iota\). Moreover, the active
layers form an initial segment \(1,\ldots,m(x)\). Indeed, if a cube
containing \(x\) survives until the \(u\)-th peeling step, then at each
preceding step a maximal remaining ancestor of that cube is removed; all
these ancestors also contain \(x\). The corresponding cubes therefore
form a strictly decreasing chain
\[
    K_1(x)^\iota\supsetneq K_2(x)^\iota
    \supsetneq\cdots\supsetneq K_{m(x)}(x)^\iota.
\]
Consequently,
\[
    s_{K_1(x)}>s_{K_2(x)}>\cdots>s_{K_{m(x)}}.
\]
To include the harmless case \(t_1(x)\geq t_2(x)\), in which the
truncated sum is empty, define the integer-valued functions
\[
\begin{aligned}
    v_1(x)
    &:=
    \mathbf 1_{\{t_1(x)<t_2(x)\}}
    \#\{1\leq u\leq m(x):s_{K_u(x)}>t_1(x)\},\\
    v_2(x)
    &:=
    \mathbf 1_{\{t_1(x)<t_2(x)\}}
    \#\{1\leq u\leq m(x):s_{K_u(x)}>t_2(x)\}.
\end{aligned}
\]
They are measurable because, for \(j=1,2\),
\[
    v_j(x)
    =
    \mathbf 1_{\{t_1(x)<t_2(x)\}}
    \sum_{u=1}^{u_0}\sum_{K\in\mathcal M_u^n}
       \mathbf 1_{K^\iota}(x)
       \mathbf 1_{\{s_K>t_j(x)\}}.
\]
By construction,
\[
    0\leq v_2(x)\leq v_1(x)\leq u_0.
\]
The strict decrease of the scales implies that the lower cutoff
\(s_K>t_1(x)\) selects the first \(v_1(x)\) active layers, whereas
\(s_K>t_2(x)\) selects the first \(v_2(x)\) active layers. Therefore,
with
\[
    S_v(x):=\sum_{u=1}^{v}\beta_u(x),
    \qquad S_0(x):=0,
\]
we have, on every active layer,
\[
    \mathbf 1_{\{t_1(x)<s_{K_u(x)}\leq t_2(x)\}}
    =
    \mathbf 1_{\{u\leq v_1(x)\}}
    -
    \mathbf 1_{\{u\leq v_2(x)\}}.
\]
Both sides vanish when \(t_1(x)\geq t_2(x)\). It follows that
\[
\begin{aligned}
    \sum_{K\in\mathcal I_{s,\iota}^{\sharp,n}}
    \mathbf 1_{\{t_1(x)<s_K\leq t_2(x)\}}
    T_Kb_{s_K-s}(x)\mathbf 1_{K^\iota}(x)
    =S_{v_1(x)}(x)-S_{v_2(x)}(x).
\end{aligned}
\]
In particular,
\[
\begin{aligned}
&\left|
\sum_{K\in\mathcal I_{s,\iota}^{\sharp,n}}
    \mathbf 1_{\{t_1(x)<s_K\leq t_2(x)\}}
    T_Kb_{s_K-s}(x)\mathbf 1_{K^\iota}(x)
\right|
    \leq
    2\sup_{0\leq v\leq u_0}
       \left|\sum_{u=1}^{v}\beta_u(x)\right|.
\end{aligned}
\]

All dependence on $t_1,t_2$ is now confined to the two prefix
endpoints, while the functions $\beta_u$ are fixed.
In Section~\ref{subsec:completion-bounded-kernel-case}, we first apply
Lemma~\ref{lem:rademacher-menshov} at exponent $2$.
Thus a uniform signed bound $\|\sum_u\varepsilon_u\beta_u\|_2\leq B_2$
gives a maximal bound $C\log(2+u_0)B_2$ in $L^2$.
We then combine this maximal $L^2$ bound with a
positive-kernel $\mathcal Y_1$ pairing estimate.
A decomposition of the second input by amplitude gives
the required $\mathcal Y_p$ testing bound.

\subsection{Signed sums and microlocal endpoint estimates}
\label{subsec:signed-sums-microlocal-endpoints}
It remains to estimate the signed sums appearing in the preceding
Rademacher--Menshov reduction.

Before expanding the signed sum, we clarify the cube notation. The
physical localization cubes were denoted by \(K\) above. From this point
on, we denote the same cubes by \(J\), reserving \(P\) for the parent
cubes of maximal components introduced below. We also write $Q$, rather than $L$,
for an individual stopping cube in $\mathcal{Q}$. Thus \(J^\iota\) is the
\(\iota\)-th child of the physical cube \(J\); this is only a relabeling,
not a new family of cubes. For
\(J\in\mathcal I_{s,\iota}^{\sharp,n}\), let \(u(J)\) be the unique
integer such that \(J\in\mathcal M_{u(J)}^n\), and put
\(\varepsilon_J:=\varepsilon_{u(J)}\).

By the definition of \(\beta_u\),
\[
\begin{aligned}
    \sum_{u=1}^{u_0}\varepsilon_u\beta_u(x)
    &=
    \sum_{j\in\mathbb Z}
    \sum_{\substack{
        J\in\mathcal I_{s,\iota}^{\sharp,n}\\
        s_J=j}}
       \varepsilon_J
       T_Jb_{j-s}(x)\mathbf 1_{J^\iota}(x)
    \\
    &=
    \sum_{j\in\mathbb Z}
    \sum_{\substack{
        J\in\mathcal I_{s,\iota}^{\sharp,n}\\
        s_J=j}}
    \sum_{\substack{Q\in\mathcal Q\\s_Q=j-s}}
       \varepsilon_J
       T_j\!\left(
          b_Q\mathbf 1_{\frac12J}
       \right)(x)
       \mathbf 1_{J^\iota}(x).
\end{aligned}
\]
The second equality follows from
\[
    b_{j-s}
    =
    \sum_{\substack{Q\in\mathcal Q\\s_Q=j-s}}b_Q
\]
and from the definition
\[
    T_Jg=T_j\!\left(g\mathbf 1_{\frac12J}\right)
    \qquad (s_J=j).
\]
Notice that the cutoff \(\mathbf 1_{\frac12J}\) is still present. Whether
it can be removed depends on the scale separation \(s\); this is why the
cases \(s\geq2\) and \(s=1\) will be treated separately below. It would
also be sufficient to prove the signed-sum estimate for arbitrary signs
\(\varepsilon_J\), which is slightly stronger than the layerwise choice
\(\varepsilon_J=\varepsilon_{u(J)}\).

We introduce the collections of stopping cubes that will occur in these
two cases. For \(s\geq2\), define
\[
\begin{aligned}
    \mathfrak V_{j-s}^{n,\iota}
    &:=
    \left\{
      Q\in\mathcal Q:
      \begin{array}{l}
        s_Q=j-s,\ \text{and there exists }
        J\in\mathcal I_{s,\iota}^{\sharp,n}\\
        \text{with }s_J=j\text{ and }Q\subseteq\frac12J
      \end{array}
    \right\},
    \\
    \mathfrak V^{n,\iota}
    &:=
    \bigcup_{j\in\mathbb Z}\mathfrak V_{j-s}^{n,\iota}.
\end{aligned}
\]
For fixed \(j\) and \(\vec w\), the cubes \(\frac12J\) are pairwise
disjoint. Hence the cube \(J\) associated with
\(Q\in\mathfrak V_{j-s}^{n,\iota}\) is unique. For \(s=1\), it is more
convenient to retain the pair \((Q,J)\) and set
\[
    \widetilde{\mathfrak V}_{j-1}^{n,\iota}
    :=
    \left\{
      (Q,J):
      \begin{array}{l}
        Q\in\mathcal Q,\ s_Q=j-1,\quad
        J\in\mathcal I_{1,\iota}^{\sharp,n},\ s_J=j,\\
        b_Q\mathbf 1_{\frac12J}\not\equiv0
      \end{array}
    \right\}.
\]
Although the preceding separation argument gives uniqueness of \(J\)
for fixed \(Q\), \(j\), and \(\vec w\), up to null boundaries, we retain
the pair \((Q,J)\) in the notation because the truncated atom
\[
    b_{Q,J}:=b_Q\mathbf 1_{\frac12J}
\]
depends explicitly on the physical localization cube \(J\).

We next establish the packing estimate used in both cases. Fix
\(n\geq2\). Let
\[
    \mathscr C_{s,\iota}^{n-1}
    :=
    \max\left\{
       P^\iota:
       P\in\mathscr K_s^{\vec w},\
       P^\iota\subseteq F_{s,\iota}^{n-1}
    \right\},
\]
where \(\max\) denotes the maximal members with respect to inclusion.
These cubes are pairwise disjoint and, up to their boundaries,
\[
    F_{s,\iota}^{n-1}
    =
    \bigcup_{P^\iota\in\mathscr C_{s,\iota}^{n-1}}P^\iota.
\]
For each component \(P^\iota\), the symbol \(P\) denotes the unique
physical cube whose \(\iota\)-th child is \(P^\iota\). Define the union of
these parent cubes by
\[
    \widetilde F_{s,\iota}^{\,n-1}
    :=
    \bigcup_{P^\iota\in\mathscr C_{s,\iota}^{n-1}}P.
\]
Since \(|P|=2^d|P^\iota|\), the disjointness of the maximal
components and Lemma~\ref{lem:F-s-iota-measure} give, for \(n\geq2\),
\begin{equation}\label{eq:F-tilde-measure}
\begin{aligned}
    |\widetilde F_{s,\iota}^{\,n-1}|
    \leq
    \sum_{P^\iota\in\mathscr C_{s,\iota}^{n-1}}|P|
    =
    2^d
    \sum_{P^\iota\in\mathscr C_{s,\iota}^{n-1}}|P^\iota|
    =
    2^d|F_{s,\iota}^{n-1}|
    \lesssim_d
    2^{-2(n-1)}|Q_0|.
\end{aligned}
\end{equation}

For \(n=1\), there is no preceding exceptional set to decompose. We
set
\[
    \widetilde F_{s,\iota}^{\,0}
    :=
    \widetilde Q_0^{\vec w}.
\]
By the definition of the top localization region,
\begin{equation}\label{eq:F-tilde-measure-generation-zero}
    |\widetilde F_{s,\iota}^{\,0}|
    =3^d|Q_0|
    \lesssim_d|Q_0|.
\end{equation}

We claim that, for every \(n\geq1\),
\begin{equation}\label{eq:I-contained-F-tilde}
    \bigcup_{J\in\mathcal I_{s,\iota}^{\sharp,n}}J
    \subseteq
    \widetilde F_{s,\iota}^{\,n-1}.
\end{equation}
When \(n=1\), this follows directly from the restriction
\(J\subseteq\widetilde Q_0^{\vec w}\) in the definition of
\(\mathscr K_s^{\vec w}\). Suppose that \(n\geq2\). Then
\[
    J^\iota\subseteq F_{s,\iota}^{n-1}
    \qquad
    \text{for every }
    J\in\mathcal I_{s,\iota}^{\sharp,n}.
\]
Hence \(J^\iota\) is contained in a unique maximal component
\(P^\iota\in\mathscr C_{s,\iota}^{n-1}\).
Since \(J^\iota\subseteq P^\iota\) and the two children occupy
the same relative position in their respective parent cubes,
comparison of the coordinate endpoints gives \(J\subseteq P\).
Hence
\(J\subseteq P\subseteq\widetilde F_{s,\iota}^{\,n-1}\),
which proves \eqref{eq:I-contained-F-tilde}.

We can now prove the desired packing bound. By
\eqref{eq:atomic-size},
\[
    \|b_Q\|_{L^1}
    \lesssim_d
    \|b\|_{\dot{\mathcal X}_1(\mathcal Q)}|Q|
    \qquad (Q\in\mathcal Q).
\]
The stopping cubes are pairwise disjoint by
\eqref{eq:stopping-disjointness}, and every
\(Q\in\mathfrak V^{n,\iota}\) is contained in its associated physical
cube \(J\). It follows from \eqref{eq:I-contained-F-tilde} and
\eqref{eq:F-tilde-measure}, with
\eqref{eq:F-tilde-measure-generation-zero} used when \(n=1\), that
\begin{equation}\label{eq:packing-bQ-revised}
\begin{aligned}
    \sum_{Q\in\mathfrak V^{n,\iota}}\|b_Q\|_{L^1}
    \lesssim_d
    \|b\|_{\dot{\mathcal X}_1(\mathcal Q)}
    \sum_{Q\in\mathfrak V^{n,\iota}}|Q|
    \leq
    \|b\|_{\dot{\mathcal X}_1(\mathcal Q)}
    \left|
       \bigcup_{J\in\mathcal I_{s,\iota}^{\sharp,n}}J
    \right|
    \lesssim_d
    2^{-2(n-1)}
    \|b\|_{\dot{\mathcal X}_1(\mathcal Q)}|Q_0|.
\end{aligned}
\end{equation}
The same packing argument applies when $s=1$:
the associated stopping cubes are distinct and lie in
their corresponding physical cubes.

We now separate the cases \(s\geq2\) and \(s=1\). The distinction is
caused by the physical cutoff \(\mathbf 1_{\frac12J}\) in the definition
of \(T_J\). For
\(Q\in\mathfrak V_{j-s}^{n,\iota}\), let \(J(Q)\) denote the unique
physical localization cube associated with \(Q\).

Suppose first that \(s\geq2\). Let \(J\in\mathcal D^{\vec w}\) satisfy
\(s_J=j\), and let \(Q\in\mathcal Q\) satisfy \(s_Q=j-s\). The boundary
of the concentric half-cube \(\frac12J\) lies on the lattice of dyadic
cubes of scale \(2^{j-s}\) whenever \(s\geq2\). Hence
\(\frac12J\) is a union of cubes of the same scale as \(Q\). It follows
that
\[
    Q\cap\frac12J\neq\emptyset
    \quad\Longrightarrow\quad
    Q\subseteq\frac12J
\]
up to boundaries. In particular, whenever \(Q\) contributes to
\(T_Jb_{j-s}\),
\[
    b_Q\mathbf 1_{\frac12J}=b_Q,
    \qquad
    \int_Qb_Q(x)\,dx=0,
\]
and therefore
\begin{equation}\label{eq:TJ-full-atom}
    T_Jb_Q
    =
    T_j\!\left(b_Q\mathbf 1_{\frac12J}\right)
    =
    T_jb_Q.
\end{equation}
Thus, for \(s\geq2\), the physical localization does not alter the
stopping atom, and its mean-zero cancellation remains available.

The situation changes when \(s=1\). Now
\(\ell(Q)=2^{j-1}=\frac12\ell(J)\), but the boundary of
\(\frac12J\) need not lie on the lattice of the stopping cubes at scale
\(2^{j-1}\). Consequently, a stopping cube \(Q\) may have a
positive-measure intersection with \(\frac12J\) without being contained
in \(\frac12J\). The operator \(T_J\) only sees the part of \(b_Q\)
inside this half-cube. We must therefore define
\[
    b_{Q,J}
    :=
    b_Q\mathbf 1_{\frac12J},
    \qquad
    (Q,J)\in\widetilde{\mathfrak V}_{j-1}^{n,\iota}.
\]
With this notation, the exact localization identity is
\begin{equation}\label{eq:TJ-truncated-atom-s1}
    T_Jb_Q
    =
    T_j\!\left(b_Q\mathbf 1_{\frac12J}\right)
    =
    T_jb_{Q,J}.
\end{equation}

Although $b_{Q,J}$ need not have mean zero, it satisfies
\[
    \operatorname{supp}b_{Q,J}\subseteq Q,
    \qquad
    \|b_{Q,J}\|_1\leq\|b_Q\|_1.
\]
These properties suffice for the positive-kernel
estimate used when $s=1$.

We record the corresponding packing estimate. For fixed \(j\) and \(\vec w\), 
the preceding separation argument shows
that each stopping cube \(Q\) occurs in at most one pair \((Q,J)\), up
to null boundaries. Moreover, the same dyadic geometry shows that every
such \(Q\) is contained in \(J\), although it need not be contained in
\(\frac12J\). Therefore \eqref{eq:packing-bQ-revised} gives
\begin{equation}\label{eq:packing-truncated-atoms-s1}
\begin{aligned}
    \sum_{j\in\mathbb Z}
    \sum_{(Q,J)\in\widetilde{\mathfrak V}_{j-1}^{n,\iota}}
       \|b_{Q,J}\|_{L^1}
    &\leq
    \sum_{j\in\mathbb Z}
    \sum_{(Q,J)\in\widetilde{\mathfrak V}_{j-1}^{n,\iota}}
       \|b_Q\|_{L^1}
    &\lesssim_d
    2^{-2(n-1)}
    \|b\|_{\dot{\mathcal X}_1(\mathcal Q)}|Q_0|.
\end{aligned}
\end{equation}

Fix \(s\geq2\). We first replace the sharp spatial cutoff by a smooth
one. This step isolates the contribution of a thin boundary layer of
\(J^\iota\) and allows us to apply frequency projections to the
remaining part. Let \(\varpi\in C_c^\infty(\mathbb R^d)\) be
nonnegative and radial, with
\[
 \operatorname{supp}\varpi\subseteq
 \bigl\{x\in\mathbb R^d:|x|\leq2^{-4}\bigr\},
 \qquad
 \int_{\mathbb R^d}\varpi(x)\,dx=1.
\]
For \(t\in\mathbb R\), set
\[
 \varpi_t(x):=2^{-td}\varpi(2^{-t}x)
 \quad\text{and}\quad
 P_tg(x):=(\varpi_t\ast g)(x).
\]
For compatibility with the notation used in the microlocal argument,
we write
\[
    \chi_E:=\mathbf 1_E
\]
throughout this subsection. Define
\[
 \widetilde{\chi}_{J^\iota}
 :=P_{j-s\kappa}\chi_{J^\iota},
\]
where \(0<\kappa<1\) will be fixed later. Then
\(0\leq\widetilde{\chi}_{J^\iota}\leq1\), and, for every
multi-index \(\alpha\),
\begin{equation}\label{4.5}
 \bigl|\partial_x^\alpha
 \widetilde{\chi}_{J^\iota}(x)\bigr|
 \lesssim_{\alpha,\varpi}
 2^{-(j-s\kappa)|\alpha|}.
\end{equation}
Indeed, this follows by differentiating the convolution and using
\[
 \|\partial^\alpha\varpi_{j-s\kappa}\|_{L^1}
 \lesssim_{\alpha,\varpi}
 2^{-(j-s\kappa)|\alpha|}.
\]
Moreover,
\(\chi_{J^\iota}-\widetilde{\chi}_{J^\iota}\) is supported in the
\(2^{j-s\kappa-4}\)-neighborhood of
\(\partial J^\iota\). Thus the first error created by this
regularization is confined to a thin boundary layer.

We next introduce the angular and frequency decompositions used for
the rough kernel. We employ the microlocal scheme originating in
\cite{MR1317232}, in the form adapted to maximal rough singular
integrals in \cite{lai2025weak11estimatemaximal}. Choose a parameter
\(\gamma\) with
$
 0<\gamma<\kappa<1.
$
For the fixed \(s\), let
\(\Theta_s=\{e_v^s\}_v\subset S^{d-1}\) be a maximal collection
satisfying
\[
 |e_v^s-e_{v'}^s|\geq2^{-s\gamma-4}
 \qquad (v\neq v').
\]
Maximality immediately gives the corresponding covering property:
for every \(\theta\in S^{d-1}\), there is an \(e_v^s\in\Theta_s\)
such that
$
 |\theta-e_v^s|\leq2^{-s\gamma-4}.
$
The separation of the points and a standard comparison of spherical
cap measures imply
$
 \#\Theta_s\lesssim_d2^{s\gamma(d-1)},
$
and the enlarged caps have overlap bounded by a constant depending
only on \(d\).

By assigning each point of the sphere to one of its nearest elements
of \(\Theta_s\), with an arbitrary measurable rule for resolving
ties, we obtain a measurable partition
\(\{E_v^s\}_v\) of \(S^{d-1}\) such that
\[
 e_v^s\in E_v^s,\qquad
 \operatorname{diam}(E_v^s)\leq2^{-s\gamma-2},
 \qquad
 S^{d-1}=\bigsqcup_v E_v^s.
\]
The small parameter \(\gamma\), which is not present in the original
form of Seeger's decomposition, leaves room for the interpolation
argument below. Its precise size will be chosen only after all losses
in \(s\) have been identified.

Recall that \(K_j\) denotes the kernel of \(T_j\). For \(x\neq0\) and fixed $s$, set
\[
 K_j^v(x):=
 K_j(x)\chi_{E_v^s}\left(\frac{x}{|x|}\right),
\]
and define
\[
 T_j^vg(x):=
 \int_{\mathbb R^d}K_j^v(x-y)g(y)\,dy.
\]
Because the sets \(E_v^s\) form a partition of the sphere, this gives
the exact angular decomposition
\[
 T_j=\sum_vT_j^v.
\]

To each angular sector we associate a projection onto frequencies
that are nearly orthogonal to its central direction. Let
\(\phi\in C_c^\infty(\mathbb R)\) be nonnegative and even, with
\[
 0\leq\phi\leq1,\qquad
 \phi(t)=1\ \text{for }|t|\leq2,\qquad
 \phi(t)=0\ \text{for }|t|>4.
\]
For \(\xi\neq0\), define the Fourier multiplier \(G_v^s\) by
\[
 \widehat{G_v^sg}(\xi)
 :=
 \phi\left(
 2^{s\gamma}
 \left\langle e_v^s,\frac{\xi}{|\xi|}\right\rangle
 \right)\widehat g(\xi).
\]
The value of the multiplier at \(\xi=0\) may be assigned arbitrarily.
Thus \(G_v^s\) retains the frequency directions satisfying
\[
 \left|
 \left\langle e_v^s,\frac{\xi}{|\xi|}\right\rangle
 \right|
 \lesssim2^{-s\gamma},
\]
whereas \(I-G_v^s\) selects the complementary directions.

We now combine the spatial and microlocal decompositions. Fix
\(Q\in\mathfrak V_{j-s}^{n,\iota}\), and write \(J=J(Q)\). First
insert
\(\chi_{J^\iota}
=(\chi_{J^\iota}-\widetilde{\chi}_{J^\iota})
+\widetilde{\chi}_{J^\iota}\). On the second summand insert
\(I=P_{j-s\kappa}+(I-P_{j-s\kappa})\). Finally, use
\(T_j=\sum_vT_j^v\) in the high-frequency part and insert
\(I=G_v^s+(I-G_v^s)\) for each \(v\). These three exact identities
give
\[
\begin{aligned}
 \varepsilon_J(T_jb_Q)\chi_{J^\iota}
 ={}&
 \varepsilon_J(T_jb_Q)
 \bigl(\chi_{J^\iota}-\widetilde{\chi}_{J^\iota}\bigr)
 \\
 &+
 P_{j-s\kappa}
 \bigl[\varepsilon_J(T_jb_Q)
       \widetilde{\chi}_{J^\iota}\bigr]
 \\
 &+
 \sum_v
 (I-P_{j-s\kappa})G_v^s
 \bigl[\varepsilon_J(T_j^vb_Q)
       \widetilde{\chi}_{J^\iota}\bigr]
 \\
 &+
 \sum_v
 (I-P_{j-s\kappa})(I-G_v^s)
 \bigl[\varepsilon_J(T_j^vb_Q)
       \widetilde{\chi}_{J^\iota}\bigr].
\end{aligned}
\]
The four terms represent, respectively, the boundary error produced
by smoothing, the spatially smoothed low-frequency contribution, the
high-frequency contribution whose frequency direction is nearly
orthogonal to \(e_v^s\), and its complementary microlocal
contribution. The following endpoint estimates, together with the
positive-kernel $L^3$ bounds established below, yield the
uniform signed $L^2$ estimate used in
Section~\ref{subsec:completion-bounded-kernel-case}.

We next establish the endpoint estimates for the four
microlocal components. Fix \(s\geq2\), \(n\geq1\), and
\(1\leq\iota\leq2^d\).
After summing the four terms in the preceding decomposition over
\(j\) and \(Q\), denote them by
\[
\begin{aligned}
 \mathcal I_1
 &:=
 \sum_{j\in\mathbb Z}
 \sum_{Q\in\mathfrak V_{j-s}^{n,\iota}}
 \varepsilon_{J(Q)}(T_jb_Q)
 \bigl(
   \chi_{J(Q)^\iota}
   -\widetilde\chi_{J(Q)^\iota}
 \bigr),                                                    \\
 \mathcal I_2
 &:=
 \sum_{j\in\mathbb Z}
 \sum_{Q\in\mathfrak V_{j-s}^{n,\iota}}
 P_{j-s\kappa}
 \bigl[
   \varepsilon_{J(Q)}(T_jb_Q)
   \widetilde\chi_{J(Q)^\iota}
 \bigr],                                                    \\
 \mathcal I_3
 &:=
 \sum_{j\in\mathbb Z}
 \sum_{Q\in\mathfrak V_{j-s}^{n,\iota}}
 \sum_v
 (I-P_{j-s\kappa})G_v^s
 \bigl[
   \varepsilon_{J(Q)}(T_j^vb_Q)
   \widetilde\chi_{J(Q)^\iota}
 \bigr],                                                    \\
 \mathcal I_4
 &:=
 \sum_{j\in\mathbb Z}
 \sum_{Q\in\mathfrak V_{j-s}^{n,\iota}}
 \sum_v
 (I-P_{j-s\kappa})(I-G_v^s)
 \bigl[
   \varepsilon_{J(Q)}(T_j^vb_Q)
   \widetilde\chi_{J(Q)^\iota}
 \bigr].
\end{aligned}
\]
Thus
\[
    \sum_j\sum_{Q\in\mathfrak V_{j-s}^{n,\iota}}
       \varepsilon_{J(Q)}(T_jb_Q)\mathbf1_{J(Q)^\iota}
    =\mathcal I_1+\mathcal I_2+\mathcal I_3+\mathcal I_4.
\]

Before stating the endpoint estimates, we record the correspondence
with the microlocal argument of
\cite{lai2025weak11estimatemaximal}. For every
\(Q\in\mathfrak V_{j-s}^{n,\iota}\),
\[
\begin{gathered}
    \ell(Q)=2^{j-s},\qquad
    Q\subseteq\tfrac12J(Q),\qquad
    \operatorname{supp}b_Q\subseteq Q,\\
    \int_Qb_Q=0,\qquad
    \|b_Q\|_{L^1}\leq C_d\|b\|_{\dot{\mathcal X}_1(\mathcal Q)}|Q|.
\end{gathered}
\]
Moreover, \eqref{eq:packing-bQ-revised} gives
\[
    \sum_{j\in\mathbb Z}
    \sum_{Q\in\mathfrak V_{j-s}^{n,\iota}}
       \|b_Q\|_{L^1}
    \lesssim_d
    2^{-2(n-1)}\|b\|_{\dot{\mathcal X}_1(\mathcal Q)}|Q_0|.
\]
The cutoff estimate \eqref{4.5}, the angular decomposition,
and the multipliers \(G_v^s\) also agree with those in the cited argument.
These are precisely the structural hypotheses used there. The signs
are harmless, and the occupancy parameter \(n\) enters only through
the preceding packing estimate.

\begin{prop}[Endpoint estimates for the microlocal pieces]
\label{prop:microlocal-endpoints}
There exist constants \(\delta_3,\delta_4>0\), depending only on the
dimension and on the auxiliary parameters, such that
\[
\begin{aligned}
 \|\mathcal I_1\|_{L^1(\mathbb R^d)}
 &\lesssim_d
 2^{-2(n-1)}2^{-\kappa s}
 \|\Omega\|_{L^\infty(S^{d-1})}
 \|b\|_{\dot{\mathcal X}_1(\mathcal Q)}|Q_0|,
 &&\text{\rm(i)}\\
 \|\mathcal I_2\|_{L^1(\mathbb R^d)}
 &\lesssim_d
 2^{-2(n-1)}2^{-(1-\kappa)s}
 \|\Omega\|_{L^\infty(S^{d-1})}
 \|b\|_{\dot{\mathcal X}_1(\mathcal Q)}|Q_0|,
 &&\text{\rm(ii)}\\
 \|\mathcal I_3\|_{L^2(\mathbb R^d)}
 &\lesssim_d
 2^{-(n-1)}2^{-\delta_3s}
 \|\Omega\|_{L^\infty(S^{d-1})}
 \|b\|_{\dot{\mathcal X}_1(\mathcal Q)}|Q_0|^{1/2},
 &&\text{\rm(iii)}\\
 \|\mathcal I_4\|_{L^2(\mathbb R^d)}
 &\lesssim_d
 2^{-(n-1)}2^{-\delta_4s}
 \|\Omega\|_{L^\infty(S^{d-1})}
 \|b\|_{\dot{\mathcal X}_1(\mathcal Q)}|Q_0|^{1/2}.
 &&\text{\rm(iv)}
\end{aligned}
\]
Consequently, if
\[
    \mathcal I_{34}:=\mathcal I_3+\mathcal I_4,
    \qquad
    \delta_0:=\min\{\delta_3,\delta_4\},
\]
then
\[
 \|\mathcal I_{34}\|_{L^2(\mathbb R^d)}
 \lesssim_d
 2^{-(n-1)}2^{-\delta_0s}
 \|\Omega\|_{L^\infty(S^{d-1})}
 \|b\|_{\dot{\mathcal X}_1(\mathcal Q)}|Q_0|^{1/2}.
\]
\end{prop}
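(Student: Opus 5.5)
The plan is to treat the four pieces separately, following Seeger's and Lai's microlocal scheme. Throughout we use the structural facts listed just before the proposition: the atoms $b_Q$ have mean zero and satisfy $Q\subseteq\frac12J(Q)$, the $L^1$ size bound holds, and the packing estimate \eqref{eq:packing-bQ-revised} gives total mass $\lesssim 2^{-2(n-1)}\|b\|_{\dot{\mathcal X}_1}|Q_0|$. The factor $2^{-2(n-1)}$ in (i)--(ii) comes directly from this packing bound. In (iii)--(iv) the factor becomes $2^{-(n-1)}$ because the $L^2$ argument produces the square root of $\sum_Q\|b_Q\|_1\cdot|Q_0|^{-1}\cdot\|b\|\,|Q_0|$-type products. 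In each case $\|\Omega\|_\infty$ is pulled out of the kernel bound $|K_j|\lesssim\|\Omega\|_\infty2^{-jd}\mathbf 1_{|x|\sim2^j}$.

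\textbf{Piece (i).} Use only the triangle inequality. The function $\chi_{J^\iota}-\widetilde\chi_{J^\iota}$ is bounded by $1$ and supported in a $2^{j-s\kappa}$-neighbourhood of $\partial J^\iota$, a set of measure $\lesssim 2^{-s\kappa}|J|$. Combined with $\|T_jb_Q\|_\infty\lesssim\|\Omega\|_\infty2^{-jd}\|b_Q\|_1$, this gives
\[
\|(T_jb_Q)(\chi_{J^\iota}-\widetilde\chi_{J^\iota})\|_1\lesssim 2^{-\kappa s}\|\Omega\|_\infty\|b_Q\|_1.
\]
Summing with packing yields (i).

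\textbf{Piece (ii).} Move $P_{j-s\kappa}$ onto the atom using the mean zero of $b_Q$. Write $P_{j-s\kappa}[(T_jb_Q)\widetilde\chi]$ through its kernel, and use that $\widetilde\chi$ is smooth at scale $2^{j-s\kappa}$ by \eqref{4.5}. Since $K_j*\varpi_{j-s\kappa}$ is Lipschitz with constant $\lesssim\|\Omega\|_\infty 2^{-jd}2^{-(j-s\kappa)}$, its pairing with $b_Q$, which has mean zero on a cube of side $2^{j-s}$, gains $2^{j-s}/2^{j-s\kappa}=2^{-(1-\kappa)s}$. Commuting the smooth cutoff with the mollifier produces an error that is also controlled by \eqref{4.5} and is of the same size. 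This yields an $L^1$ bound of $2^{-(1-\kappa)s}\|\Omega\|_\infty\|b_Q\|_1$ per atom, and summing gives (ii).

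\textbf{Pieces (iii) and (iv).} These are the main obstacle, and here we follow \cite{lai2025weak11estimatemaximal} and Seeger.

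For $\mathcal I_4$, the multiplier $(I-P_{j-s\kappa})(I-G^s_v)$ restricts to frequencies $|\xi|\gtrsim2^{-j+s\kappa}$ with $|\langle e_v,\xi/|\xi|\rangle|\gtrsim2^{-s\gamma}$. The kernel $K_j^v$ is supported in a narrow sector around $e_v$, so its Fourier transform decays rapidly in those directions; integration by parts along $e_v$, using the smoothness of $\psi$ radially, gives a factor $(2^{j}|\xi|2^{-s\gamma})^{-N}$. Since $\gamma<\kappa$, this is $2^{-N s(\kappa-\gamma)}$. We then estimate in $L^2$ by almost orthogonality in $j$, relying on the frequency localization $|\xi|\gtrsim 2^{-j+s\kappa}$ and on $\widetilde\chi$ being smooth at that scale. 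Summing over $v$ costs only $\#\Theta_s\lesssim2^{s\gamma(d-1)}$, so this is acceptable for $N$ large. The atoms enter through $\|\sum_Q|b_Q|*\ldots\|_2^2\lesssim\|\Omega\|_\infty^2\|b\|\sum\|b_Q\|_1$, using $\|b_Q\|_\infty$-type control replaced by $L^1$ size and packing at scale $2^{j}$.

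For $\mathcal I_3$, use orthogonality of the $G_v^s$ together with a $TT^*$ argument in $L^2$. The multipliers $G_v^s$ have bounded overlap only up to the factor $2^{s\gamma(d-2)}$ relative to the number of caps, and $\sum_v|G_v^s(\xi)|^2\lesssim2^{s\gamma(d-2)}$. Expanding $\|\sum_v\cdots\|_2^2$, one estimates $\sum_v\|\sum_{j,Q}T_j^vb_Q\widetilde\chi\|_2^2$ by the positive kernel $|K^v_j|*|K^v_j|$. Its sup is $\lesssim\|\Omega\|_\infty^2 2^{-jd}2^{-s\gamma(d-1)}$, since the sector has measure $2^{-s\gamma(d-1)}$. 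Combined with pointwise control of overlapping $b_Q$ at scale $2^j$ from the $\mathcal X_1$ norm and the packing bound, this gives $2^{-s\gamma(d-1)}\cdot2^{s\gamma(d-2)}=2^{-s\gamma}$, hence $\delta_3\approx\gamma/2$.

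The delicate point is checking that the cross terms in $j$ and the smooth cutoff $\widetilde\chi$ do not destroy the $L^2$ decay. I would handle this by an $L^2$ bound uniform in signs, as in Lai's Lemma, verifying that only the listed structural hypotheses are used. The final bound for $\mathcal I_{34}$ then follows from the triangle inequality with $\delta_0=\min\{\delta_3,\delta_4\}$, once $\gamma<\kappa$ is fixed small.
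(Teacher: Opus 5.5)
Your proofs of (i) and (ii) are sound. There is one imprecision in (ii). The commutator $[P_{j-s\kappa},\widetilde\chi_{J^\iota}]\,T_jb_Q$ gains nothing from \eqref{4.5} alone, because $\|\nabla\widetilde\chi_{J^\iota}\|_\infty 2^{j-s\kappa}\sim1$; without cancellation you only get the boundary-layer size $2^{-\kappa s}$. You must use $\int b_Q=0$ on that term as well. The cleanest route is duality: test against $\phi\in L^\infty$ and note that $K_j(-\cdot)*(\widetilde\chi_{J^\iota}P_{j-s\kappa}\phi)$ is Lipschitz with constant $\lesssim\|\Omega\|_\infty 2^{-(j-s\kappa)}$. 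The genuine gaps are in (iii) and (iv).

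\textbf{Part (iii): the count does not close.} Your per-sector bound $\sup|K_j^v|*|K_i^v|\lesssim\|\Omega\|_\infty^2 2^{-jd}2^{-s\gamma(d-1)}$, combined with packing of the atoms in balls of radius $\sim 2^j$, gives for each $v$ only $\|F_v\|_2^2\lesssim 2^{-s\gamma(d-1)}\|\Omega\|_\infty^2\|b\|_{\dot{\mathcal X}_1}\sum_{j,Q}\|b_Q\|_1$. Summing over the $\#\Theta_s\sim 2^{s\gamma(d-1)}$ sectors cancels this gain. After the Plancherel factor $2^{s\gamma(d-2)}$, the resulting bound does not decay in $s$. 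Your product $2^{-s\gamma(d-1)}\cdot 2^{s\gamma(d-2)}$ silently drops the sum over $v$.

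What is missing is a second factor $2^{-s\gamma(d-1)}$ per sector, coming from tube geometry. The kernel $|K_j^v|$ is dominated by $\|\Omega\|_\infty H_j^v$, where $H_j^v:=2^{-jd}\mathbf 1_{\mathscr T_j^v}$ and $\mathscr T_j^v$ is a tube of length $\sim2^j$ and width $\sim 2^{j-s\gamma}$. This domination survives the mollification hidden in $I-P_{j-s\kappa}$, which is where $\gamma<\kappa$ is used; your sketch never addresses that operator. Consequently $H_j^v*H_i^v$ ($i\le j$) is supported in $2\mathscr T_j^v$. The disjoint stopping cubes have side $\le 2^{j-s}\le 2^{j-s\gamma}$, so their atomic mass in any translate of $2\mathscr T_j^v$ is $\lesssim\|b\|_{\dot{\mathcal X}_1}2^{jd-s\gamma(d-1)}$. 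This gives $\|F_v\|_2^2\lesssim 2^{-2s\gamma(d-1)}(\cdots)$, and hence $2^{-s\gamma}$ after summing in $v$.

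\textbf{Part (iv): the proposed mechanism fails, for two reasons.}

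First, the multiplier $(I-P_{j-s\kappa})(I-G_v^s)$ acts on the product $(T_j^vb_Q)\widetilde\chi_{J^\iota}$, not on $T_j^vb_Q$. Multiplication by $\widetilde\chi_{J^\iota}$ spreads frequencies by up to $\sim 2^{-j+s\kappa}$, which is exactly the threshold kept by $I-P_{j-s\kappa}$. So output frequencies just above that threshold, in the good directions, can come from input frequencies near $0$ or nearly orthogonal to $e_v^s$, where $\widehat{K_j^v}$ has no decay. The decay $(2^{j-s\gamma}|\xi|)^{-N}$ transfers to the product only when $|\xi|\gtrsim 2^{-j+s\varepsilon_0}$ with $\varepsilon_0-\kappa>\gamma$, so that the angular perturbation is below the sector width. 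The intermediate band needs a different argument. This is why the paper's transfer of \cite[Section~5]{lai2025weak11estimatemaximal} carries the extra parameter $\varepsilon_0\in(\kappa,\frac{1+\kappa}2)$.

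Second, your $L^2$ bound relies at once on the oscillation of $K_j^v$ (for decay) and on positive majorants $\sum_Q|b_Q|*\cdots$ (to exploit $L^1$ atoms through packing). Taking absolute values destroys the oscillation, and without them $L^1$ atoms give no $L^2$ control.

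The paper resolves this by interpolation:
\begin{itemize}
\item a decaying bound $\|\mathcal I_4\|_1\lesssim 2^{-\rho_\ast s}\|\Omega\|_\infty\sum_{j,Q}\|b_Q\|_1$, from Lai's radial integrations by parts using the cancellation of $b_Q$ and \eqref{4.5};
\item a size bound $\|\mathcal I_4\|_3\lesssim 2^{\alpha s}(\cdots)$ with mild growth;
\item the relation $\frac12=\frac14\cdot1+\frac34\cdot\frac13$, giving $\delta_4=(\rho_\ast-3\alpha)/4>0$ once $\gamma$ is small.
\end{itemize}
Your sketch needs such an $L^1$--$L^3$ device, or an equivalent one, to reach (iv).
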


\begin{proof}
For the first two estimates, we use the corresponding single-atom
bounds from \cite[(4.19) and (4.23)]
{lai2025weak11estimatemaximal}.  In the present notation they read
\[
\begin{aligned}
 \bigl\|
   (T_jb_Q)
   (\chi_{J(Q)^\iota}
    -\widetilde\chi_{J(Q)^\iota})
 \bigr\|_{L^1}
 &\lesssim_d
 2^{-\kappa s}
 \|\Omega\|_{L^\infty(S^{d-1})}\|b_Q\|_{L^1},\\
 \bigl\|
   P_{j-s\kappa}
   [(T_jb_Q)\widetilde\chi_{J(Q)^\iota}]
 \bigr\|_{L^1}
 &\lesssim_d
 2^{-(1-\kappa)s}
 \|\Omega\|_{L^\infty(S^{d-1})}\|b_Q\|_{L^1}.
\end{aligned}
\]
The signs have modulus one and therefore do not affect these
estimates.  Summing in \(j\) and \(Q\), and then applying
\eqref{eq:packing-bQ-revised}, gives (i) and (ii).

For (iii), we give the localized form of the sector argument
in \cite[Section~4.6]{lai2025weak11estimatemaximal}.
Define
\[
    B_{j-s}:=
    \sum_{Q\in\mathfrak V_{j-s}^{n,\iota}}|b_Q|
\]
and
\[
    F_v:=
    \sum_j\sum_{Q\in\mathfrak V_{j-s}^{n,\iota}}
    (I-P_{j-s\kappa})
    \bigl[
      \varepsilon_{J(Q)}(T_j^vb_Q)
      \widetilde\chi_{J(Q)^\iota}
    \bigr].
\]
Since $P_{j-s\kappa}$ and $G_v^s$ commute,
$\mathcal I_3=\sum_vG_v^sF_v$.

For a sufficiently large fixed dimensional constant $C$,
let
\[
    \mathscr T_j^v
    :=
    \left\{
      te_v^s+u:
      u\perp e_v^s,\ 
      |t|\leq C2^j,\ 
      |u|\leq C2^{j-s\gamma}
    \right\},
    \qquad
    H_j^v:=2^{-jd}\mathbf1_{\mathscr T_j^v}.
\]
The support of $K_j^v$, the inequality $\kappa>\gamma$,
and $0\leq\widetilde\chi_{J(Q)^\iota}\leq1$ imply
\[
    |F_v|
    \lesssim_d
    \|\Omega\|_{L^\infty(S^{d-1})}\sum_jH_j^v*B_{j-s}.
\]
The kernels $H_j^v$ are even. For $i\leq j$,
\[
    H_j^v*H_i^v
    \lesssim_d
    2^{-s\gamma(d-1)}2^{-jd}
    \mathbf1_{2\mathscr T_j^v}.
\]

For each fixed $x,j,v$, the disjointness and size
bounds of the stopping atoms give
\[
\begin{aligned}
    \sum_{i\leq j}
    \int_{x+2\mathscr T_j^v}B_{i-s}(y)\,dy
    \lesssim_d
    \|b\|_{\dot{\mathcal X}_1(\mathcal Q)}\sum_{\substack{
        i\leq j,\ Q\in\mathfrak V_{i-s}^{n,\iota}\\
        Q\cap(x+2\mathscr T_j^v)\neq\emptyset}}
      |Q|
    \lesssim_d
    \|b\|_{\dot{\mathcal X}_1(\mathcal Q)}\,2^{jd-s\gamma(d-1)}.
\end{aligned}
\]
Indeed, every cube in this sum has side length at most
$2^{j-s}\leq2^{j-s\gamma}$, and hence is contained
in a fixed dilation of the tube. The cubes are
pairwise disjoint.

Expanding the square and ordering the scales, we obtain
\[
\begin{aligned}
    \|F_v\|_2^2
    &\lesssim_d
    \|\Omega\|_{L^\infty(S^{d-1})}^2\sum_j
    \int B_{j-s}(x)
       \sum_{i\leq j}
       (H_j^v*H_i^v*B_{i-s})(x)\,dx\\
    &\lesssim_d
    2^{-2s\gamma(d-1)} \|\Omega\|_{L^\infty(S^{d-1})}^2 \|b\|_{\dot{\mathcal X}_1(\mathcal Q)}\sum_j\sum_{Q\in\mathfrak V_{j-s}^{n,\iota}}\|b_Q\|_1.
\end{aligned}
\]
These estimates are uniform in the signs.

The separation of the directions $e_v^s$ gives
\[
    \sup_{\xi\neq0}
    \sum_v
    \left|
      \phi\left(
        2^{s\gamma}
        \left\langle e_v^s,\frac{\xi}{|\xi|}\right\rangle
      \right)
    \right|^2
    \lesssim_d 2^{s\gamma(d-2)}.
\]
Thus Plancherel's theorem, Cauchy--Schwarz, and
$\#\Theta_s\lesssim_d2^{s\gamma(d-1)}$ yield
\[
\begin{aligned}
    \|\mathcal I_3\|_2^2
    \lesssim_d
    2^{s\gamma(d-2)}\sum_v\|F_v\|_2^2
    \lesssim_d
    2^{-s\gamma} \|\Omega\|_{L^\infty(S^{d-1})}^2\|b\|_{\dot{\mathcal X}_1(\mathcal Q)}\sum_j\sum_{Q\in\mathfrak V_{j-s}^{n,\iota}}\|b_Q\|_1.
\end{aligned}
\]
Finally, \eqref{eq:packing-bQ-revised} gives
\[
    \|\mathcal I_3\|_2
    \lesssim_d
    2^{-(n-1)}2^{-s\gamma/2}
    \|\Omega\|_{L^\infty(S^{d-1})}
    \|b\|_{\dot{\mathcal X}_1(\mathcal Q)}
    |Q_0|^{1/2}.
\]
This proves (iii) with $\delta_3=\gamma/2$.

For completeness, we give the short transfer argument for (iv).
It also explains precisely how the parameters and the localized bad
functions correspond to those in
\cite[Section~5]{lai2025weak11estimatemaximal}.  Put
$
 N_d:=\left\lfloor\frac d2\right\rfloor+1.
$
Choose
$
 \kappa<\varepsilon_0<\frac{1+\kappa}{2},
$
then choose \(\gamma>0\) sufficiently small with
\[
 \gamma<\min\{\kappa,\varepsilon_0-\kappa\}.
\]
Finally choose an integer \(N_1\) sufficiently large.  Define
\[
\begin{aligned}
 \rho_1
 &:=
 (\varepsilon_0-\kappa-\gamma)N_1-2\gamma N_d,\\
 \rho_2
 &:=1-\varepsilon_0,\qquad
 \rho_3:=1-2\varepsilon_0+\kappa,\\
 \alpha
 &:=\gamma N_d+\frac23\gamma(d-1).
\end{aligned}
\]
The choices can be made in this order so that
\[
 \vartheta_\ell
 :=\frac{\rho_\ell}{4}-\frac{3\alpha}{4}>0,
 \qquad \ell=1,2,3.
\]

The estimates in
\cite[Section~5]{lai2025weak11estimatemaximal}
use the scale, support, cancellation, and $L^1$-size
conditions on the atoms, together with the
disjointness of their supporting cubes.
These properties have been verified above.
The spatial cutoffs satisfy the required derivative
bounds by \eqref{4.5}. Hence every occurrence of the
Calder\'on--Zygmund height in the estimates of
\cite[Section~5]{lai2025weak11estimatemaximal} is replaced by
\(\|b\|_{\dot{\mathcal X}_1(\mathcal Q)}\),
while the angular level-set factor can be estimated directly. Indeed,
for every angular cell \(E_v^s\),
\[
    \int_{E_v^s}|\Omega(\theta)|\,d\sigma(\theta)
    \leq
    \|\Omega\|_{L^\infty(S^{d-1})}\sigma(E_v^s)
    \lesssim_d
    2^{-s\gamma(d-1)}
    \|\Omega\|_{L^\infty(S^{d-1})}.
\]
In the estimates of \cite[Section~5]
{lai2025weak11estimatemaximal}, we bound the angular integrals
directly by the preceding inequality. This replaces the
level-set bound by $\|\Omega\|_\infty$ and removes the factor
$2^{s\eta}$ from the resulting estimates.
Moreover, the factor \(\lambda^2C_\Omega^{-2}\) in Lai's \(L^3\)
size estimate is replaced by
\(\|b\|_{\dot{\mathcal X}_1(\mathcal Q)}^2\), because the present atoms
satisfy
\[
    \|b_Q\|_{L^1}
    \leq C_d\|b\|_{\dot{\mathcal X}_1(\mathcal Q)}|Q|.
\]
Set $\rho_{\ast}:=\min_{1\leq \ell \leq 3} \rho_{\ell}$.
With these substitutions, the estimates in
\cite[Lemmas~5.1--5.3]{lai2025weak11estimatemaximal},
combined and summed as in
\cite[(5.16)--(5.17)]{lai2025weak11estimatemaximal},
give the $L^1$ bound below. The $L^3$ bound follows
from the proof of
\cite[Lemma~4.8]{lai2025weak11estimatemaximal}
with the same substitutions:
\[
\begin{aligned}
 \|\mathcal I_4\|_{L^1}
 &\lesssim_d
 \sum_{\ell=1}^3 2^{-\rho_\ell s}
 \|\Omega\|_{L^\infty(S^{d-1})}
 \sum_{j\in\mathbb Z}
 \sum_{Q\in\mathfrak V_{j-s}^{n,\iota}}
 \|b_Q\|_{L^1}\\
 &\lesssim_d
 2^{-\rho_\ast s}
 \|\Omega\|_{L^\infty(S^{d-1})}
 \sum_{j\in\mathbb Z}
 \sum_{Q\in\mathfrak V_{j-s}^{n,\iota}}
  \|b_Q\|_{L^1},
\end{aligned}
\]
and
\[
 \|\mathcal I_4\|_{L^3}
 \lesssim_d
 2^{\alpha s}
 \|\Omega\|_{L^\infty(S^{d-1})}
 \|b\|_{\dot{\mathcal X}_1(\mathcal Q)}^{2/3}
 \left(
   \sum_{j\in\mathbb Z}
   \sum_{Q\in\mathfrak V_{j-s}^{n,\iota}}
   \|b_Q\|_{L^1}
 \right)^{1/3}.
\]
Using \eqref{eq:packing-bQ-revised} in these two estimates yields 
\[
\begin{aligned}
 \|\mathcal I_4\|_{L^1}
 &\lesssim_d
 2^{-\rho_\ast s}
 2^{-2(n-1)}
 \|\Omega\|_{L^\infty(S^{d-1})}\|b\|_{\dot{\mathcal X}_1(\mathcal Q)}|Q_0|,\\
 \|\mathcal I_4\|_{L^3}
 &\lesssim_d
 2^{\alpha s}2^{-2(n-1)/3}
 \|\Omega\|_{L^\infty(S^{d-1})}\|b\|_{\dot{\mathcal X}_1(\mathcal Q)}|Q_0|^{1/3}.
\end{aligned}
\]
Finally, interpolate between \(L^1\) and \(L^3\), using
$
 \frac12=\frac14\cdot1+\frac34\cdot\frac13.
$
The resulting \(s\)-exponent is
\(\delta_4=(\rho_\ast-3\alpha)/4>0\), and
$
    \bigl(2^{-2(n-1)}\bigr)^{1/4}
    \bigl(2^{-2(n-1)/3}\bigr)^{3/4}
    =2^{-(n-1)}.
$
This proves (iv).  The final estimate for \(\mathcal I_{34}\) follows from
the triangle inequality.
\end{proof}
\begin{rem}[The separation parameter]
\label{rem:Lai-microlocal-transfer}
The preceding estimates hold for every integer $s>1$.
Indeed, the dyadic geometry gives
$Q\subseteq\tfrac12J(Q)$, so the physical localization
preserves each atom and its cancellation. Moreover,
\[
    2^{j-s}\leq 2^{j-s\kappa}
    \leq 2^{j-s\gamma}\leq 2^j,
    \qquad 0<\gamma<\kappa<1.
\]
Thus smoothing at scale $2^{j-s\kappa}$ preserves the
required sector support bounds, while the boundary-layer
and cancellation estimates give the factors
$2^{-\kappa s}$ and $2^{-(1-\kappa)s}$, respectively.

For the complementary microlocal term, wherever the
multiplier of $I-G_v^s$ is nonzero, every $\theta\in E_v^s$
satisfies
\[
    \left|
      \left\langle\theta,\frac{\xi}{|\xi|}\right\rangle
    \right|
    \geq 2^{1-s\gamma}-2^{-s\gamma-2}
    \geq 2^{-s\gamma}.
\]
Consequently, the radial integrations by parts used in
\cite[Section~5]{lai2025weak11estimatemaximal}
apply with the same scale factors for every integer $s>1$.
All auxiliary parameters are fixed independently of $p$.

For $s=1$, we retain
$b_{Q,J}=b_Q\mathbf1_{\frac12J}$ and use the
positive-kernel estimates, which require only support
and $L^1$-size.
\end{rem}

\subsection{Positive-kernel \(L^3\) estimates}
\label{subsec:Lr-estimates-interpolation}

We prove the positive-kernel $L^3$ bounds used below
to obtain decay for signed sums in $L^2$.
The proof follows the moment argument in
\cite[Lemmas~4.6 and~4.7]{lai2025weak11estimatemaximal}.

\begin{lem}\label{lem:positive-kernel-L3}
For every integer $s>1$,
\[
\begin{aligned}
    \max\Bigg\{
    \left\|
       \sum_j\sum_{Q\in\mathfrak V_{j-s}^{n,\iota}}
          |T_jb_Q|
      \right\|_3,&
    \left\|
       \sum_j\sum_{Q\in\mathfrak V_{j-s}^{n,\iota}}
          P_{j-s\kappa}[|T_jb_Q|]
      \right\|_3
    \Bigg\}\\
    &\leq
    C_d2^{-2(n-1)/3}
       \|\Omega\|_\infty
       \|b\|_{\dot{\mathcal X}_1(\mathcal Q)}
       |Q_0|^{1/3}.
\end{aligned}
\]
The constant is independent of $p,s,n$ and the stopping
collection.
\end{lem}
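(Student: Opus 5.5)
The plan is to bound both sums pointwise by a single positive function and then estimate its $L^3$ norm by expanding the cube into a triple sum and ordering the scales, as in Lai's moment argument. Since $|K_j(z)|\lesssim_d\|\Omega\|_\infty 2^{-jd}\mathbf 1_{\{|z|\leq 2^{j-2}\}}$, we have pointwise
\[
    |T_jb_Q|\lesssim_d\|\Omega\|_\infty\, 2^{-jd}\mathbf 1_{\{|\cdot|\leq 2^{j}\}}*|b_Q|.
\]
Because $\varpi_{j-s\kappa}$ is supported at scale $2^{j-s\kappa-4}\leq 2^{j}$ and has integral one, $P_{j-s\kappa}[|T_jb_Q|]$ obeys the same bound with $2^j$ replaced by $2^{j+1}$. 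It therefore suffices to treat
\[
    G:=\sum_j A_j*B_{j-s},\qquad A_j:=2^{-jd}\mathbf 1_{\{|z|\leq 2^{j+1}\}},\qquad B_{j-s}:=\sum_{Q\in\mathfrak V_{j-s}^{n,\iota}}|b_Q|.
\]
Both claimed bounds then follow from a single estimate on $\|G\|_3$.

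We write $\|G\|_3^3=\sum_{j_1,j_2,j_3}\int\prod_{m=1}^3 A_{j_m}*B_{j_m-s}$ and, by symmetry, restrict to $j_1\geq j_2\geq j_3$ at the cost of a factor $6$. The factor with the largest scale is bounded by its integral, giving $\int A_{j_1}*B_{j_1-s}\lesssim\|B_{j_1-s}\|_1$. For the remaining factors we use the local $L^\infty$ bound: for every $x$ and every $j$,
\[
    \sum_{i\leq j}A_i*B_{i-s}(x)\lesssim_d \|b\|_{\dot{\mathcal X}_1(\mathcal Q)}.
\]
To prove this, note that $A_i*B_{i-s}(x)=2^{-id}\int_{B(x,2^{i+1})}B_{i-s}$. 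The cubes in $\mathfrak V^{n,\iota}$ are pairwise disjoint and $\|b_Q\|_1\lesssim\|b\|_{\dot{\mathcal X}_1}|Q|$, and each cube at scale $i-s$ meeting $B(x,2^{i+1})$ lies in $B(x,2^{i+2})$. Hence $2^{-id}\sum_Q|Q|$ is bounded by $2^{-id}$ times the measure of a ball of radius $2^{i+2}$. This alone gives $O(1)$ per scale, which is not summable in $i$.

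This per-scale bound is the main obstacle, since we need control on the full sum over $i\leq j$ rather than scale by scale. The plan is not to bound each $A_{j_2}*B$ separately in $L^\infty$. Instead, for fixed $j_1$ and fixed $x$ we bound the inner sum $\sum_{j_2\leq j_1}$ directly via
\[
    \sum_{i\leq j}2^{-id}\int_{B(x,2^{i+1})}B_{i-s}\leq \|b\|_{\dot{\mathcal X}_1}\sum_{i\leq j}2^{-id}\Bigl|\bigcup_{Q\in\mathfrak V_{i-s},\,Q\cap B(x,2^{i+1})\neq\emptyset}Q\Bigr|.
\]
The right-hand side is still a geometric sum that does not obviously telescope. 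We therefore follow Lai: we integrate the two smaller-scale factors against the largest one in the order $\int B_{j_1-s}\,(A_{j_1}*A_{j_2}*\cdots)$. This amounts to the same Young-type computation used for $\mathcal I_3$, replacing $H_j^v$ by $A_j$. Using $A_{j_1}*A_{i}\lesssim 2^{-j_1 d}\mathbf 1_{2\cdot\mathrm{supp}A_{j_1}}$ for $i\leq j_1$ and the disjointness of the cubes, we get
\[
    \int B_{j_1-s}(x)\sum_{i\leq j_1}A_{j_1}*A_i*B_{i-s}(x)\,dx\lesssim\|b\|_{\dot{\mathcal X}_1}\|B_{j_1-s}\|_1 .
\]
In this form the sum over $i\leq j_1$ collapses into a single disjoint-cubes count inside a ball of radius $\sim 2^{j_1}$.

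The third factor $j_3$ is handled the same way, after first bounding $A_{j_2}*B_{j_2-s}\leq\|b\|_{\dot{\mathcal X}_1}$-type local averages at the top scale $j_1$ and iterating the collapse once more. This gives
\[
    \|G\|_3^3\lesssim_d\|b\|_{\dot{\mathcal X}_1}^2\sum_j\|B_{j-s}\|_1 .
\]
Finally, \eqref{eq:packing-bQ-revised} bounds the remaining sum by $2^{-2(n-1)}\|b\|_{\dot{\mathcal X}_1}|Q_0|$. Taking cube roots yields $C_d2^{-2(n-1)/3}\|\Omega\|_\infty\|b\|_{\dot{\mathcal X}_1}|Q_0|^{1/3}$. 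No constant depends on $s$, since $s$ enters only through the scales of the cubes, and these are always at most the kernel scale.
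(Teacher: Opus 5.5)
Your reduction to $G=\sum_jA_j*B_{j-s}$, the expansion of $\|G\|_3^3$ over ordered scales $j_1\geq j_2\geq j_3$, and the collapse estimate are exactly the ingredients of the paper's proof. (The collapse estimate says that pairwise disjoint stopping cubes of side at most $2^{j}$ meeting a ball of radius $\sim2^{j}$ carry total mass $\lesssim_d\|b\|_{\dot{\mathcal X}_1(\mathcal Q)}2^{jd}$.) Your two-factor computation is also correct. The gap is in the three-factor step, which is the actual content of the lemma and which you only assert.

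The pointwise bound $\sum_{i\leq j}A_i*B_{i-s}(x)\lesssim_d\|b\|_{\dot{\mathcal X}_1(\mathcal Q)}$ that you first invoke is not merely unproven; it is false uniformly in $s$. Stopping cubes of scale $i-s$ can occupy a fixed proportion of the annuli $B(x,2^{i+1})\setminus B(x,2^{i})$ for up to $\sim u_0$ consecutive values of $i$ within a single generation. The sum at $x$ then grows with the number of such scales. Consequently your closing instruction, to bound $A_{j_2}*B_{j_2-s}$ by local averages first and then iterate, does not work as written: bounding the $j_2$-factor scale by scale costs a factor $j_1-j_3+1$ counting the intermediate scales. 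Moreover, the three-factor term is $\int B_{j_1-s}\,A_{j_1}*\bigl[(A_{j_2}*B_{j_2-s})(A_{j_3}*B_{j_3-s})\bigr]$, a product rather than an iterated convolution. So the two-factor bound $A_{j_1}*A_i\lesssim2^{-j_1d}\mathbf 1_{B(0,C2^{j_1})}$ does not carry over ``the same way''.

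What is missing is the following bound on the kernel product after Fubini, obtained by estimating $A_{j_1}$ and $A_{j_2}$ in $L^\infty$ and integrating $A_{j_3}$:
\[
\int_{\mathbb R^d}\prod_{\nu=1}^3A_{j_\nu}(x-y_\nu)\,dx\lesssim_d2^{-j_1d}2^{-j_2d}\,\mathbf 1_{\{|y_1-y_2|\leq C2^{j_1}\}}\mathbf 1_{\{|y_2-y_3|\leq C2^{j_2}\}}.
\]
It must be followed by the two collapses in this order:
\begin{enumerate}
\item Sum over $j_3\leq j_2$ and integrate $y_3$ over the ball of radius $C2^{j_2}$ about $y_2$. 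This gives $C_d\|b\|_{\dot{\mathcal X}_1(\mathcal Q)}2^{j_2d}$, which cancels $2^{-j_2d}$.
\item Only then sum over $j_2\leq j_1$ and integrate $y_2$ over the ball of radius $C2^{j_1}$ about $y_1$. This gives $C_d\|b\|_{\dot{\mathcal X}_1(\mathcal Q)}2^{j_1d}$ and cancels $2^{-j_1d}$.
\end{enumerate}
What remains is $\|b\|_{\dot{\mathcal X}_1(\mathcal Q)}^2\sum_j\|B_{j-s}\|_1$, and the packing estimate \eqref{eq:packing-bQ-revised} finishes as you say. The order matters: collapsing $j_2$ before $j_3$ leads back to the non-summable per-scale bound. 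With this step supplied, your argument coincides with the paper's proof.
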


\begin{proof}
Set
\[
    B_{j-s}
    :=\sum_{Q\in\mathfrak V_{j-s}^{n,\iota}}|b_Q|.
\]
For either positive sum in the statement, it suffices
to estimate $\|S\|_3$, where
\[
    S(x):=\sum_j(H_{j,s}*B_{j-s})(x)
\]
and the nonnegative kernels satisfy
\[
    H_{j,s}(x)
    \leq C_d\|\Omega\|_\infty
       2^{-jd}\mathbf1_{B(0,C2^j)}(x).
\]
For the first sum, take $H_{j,s}=|K_j|$; for the
second, take $H_{j,s}=P_{j-s\kappa}|K_j|$.
The latter satisfies the same bound because the
smoothing kernel is nonnegative, has integral one,
and is supported in a ball of radius at most $C2^j$.

We first work with finite sums.
Expanding the cube and ordering the scales gives
\[
    \|S\|_3^3
    \leq
    6\sum_{j_1\geq j_2\geq j_3}
       \int_{\mathbb R^d}
       \prod_{\nu=1}^3
          (H_{j_\nu,s}*B_{j_\nu-s})(x)\,dx.
\]
The kernel size and support estimates imply
\[
\begin{aligned}
    \int_{\mathbb R^d}
       \prod_{\nu=1}^3H_{j_\nu,s}(x-y_\nu)\,dx
       \leq
    C_d\|\Omega\|_\infty^3
       2^{-j_1d}2^{-j_2d}
       \mathbf1_{\{|y_1-y_2|\leq C2^{j_1}\}}
       \mathbf1_{\{|y_2-y_3|\leq C2^{j_2}\}}.
\end{aligned}
\]
For every fixed $j$ and $y$, disjointness of the stopping
cubes and their atomic size bounds give
\[
    \sum_{i\leq j}
       \int_{|z-y|\leq C2^j}B_{i-s}(z)\,dz
    \leq
    C_d\|b\|_{\dot{\mathcal X}_1(\mathcal Q)}2^{jd}.
\]
Indeed, every stopping cube in this sum has side length
at most $2^j$ and is contained in a fixed enlargement
of the ball.

After applying Fubini, use this estimate first for
$(j_3,y_3)$ and then for $(j_2,y_2)$. It follows that
\[
    \|S\|_3^3
    \leq
    C_d\|\Omega\|_\infty^3
       \|b\|_{\dot{\mathcal X}_1(\mathcal Q)}^2
       \sum_j\int_{\mathbb R^d}B_{j-s}.
\]
By \eqref{eq:packing-bQ-revised},
\[
    \sum_j\int_{\mathbb R^d}B_{j-s}
    \leq
    C_d2^{-2(n-1)}
       \|b\|_{\dot{\mathcal X}_1(\mathcal Q)}|Q_0|.
\]
Taking the cube root proves the estimate.
The general case follows by monotone convergence.
\end{proof}

Since
$|\mathbf1_{J^\iota}-\widetilde\chi_{J^\iota}|\leq1$
and $0\leq\widetilde\chi_{J^\iota}\leq1$,
Lemma~\ref{lem:positive-kernel-L3} implies
\[
    \max_{\nu=1,2}\|\mathcal I_\nu\|_3
    \leq
    C_d2^{-2(n-1)/3}
       \|\Omega\|_\infty
       \|b\|_{\dot{\mathcal X}_1(\mathcal Q)}
       |Q_0|^{1/3},
\]
uniformly in the fixed signs.

\subsection{Completion of the bounded-kernel argument}
\label{subsec:completion-bounded-kernel-case}
Fix $1<p\leq2$ and retain the occupancy decomposition
from Section~\ref{subsec:occupancy-rademacher-menshov}
throughout this subsection.
For fixed $s,\vec w,\iota,n$ and the functions $\beta_u$
defined above, set
\[
    \mathcal R_{s,n}^{\sharp}b(x)
    :=\max_{0\leq v_0\leq v_1\leq u_0}
         \left|\sum_{u=v_0+1}^{v_1}\beta_u(x)\right|.
\]
The geometric reduction in
Section~\ref{subsec:occupancy-rademacher-menshov} bounds every
linearized contribution by $\mathcal R_{s,n}^{\sharp}b$.

For $s>1$, Proposition~\ref{prop:microlocal-endpoints}
and Lemma~\ref{lem:positive-kernel-L3} give, for $\nu=1,2$,
\[
    \|\mathcal I_\nu\|_2
    \leq\|\mathcal I_\nu\|_1^{1/4}
         \|\mathcal I_\nu\|_3^{3/4}
    \leq C_d \|\Omega\|_\infty \|b\|_{\dot{\mathcal X}_1(\mathcal Q)} (2^{-2(n-1)}|Q_0|)^{1/2}2^{-a_\nu s/4},
\]where $a_1=\kappa, a_2=1-\kappa$.
Combining this with the $L^2$ estimate for $\mathcal I_{34}$, and setting
\[
    \delta_2=\min\{\kappa/4,(1-\kappa)/4,\delta_0\}>0,
\]
we obtain the uniform signed estimate
\[
    \sup_{\varepsilon_u\in\{-1,1\}}
    \left\|\sum_{u=1}^{u_0}\varepsilon_u\beta_u\right\|_2
    \leq C_d \|\Omega\|_\infty \|b\|_{\dot{\mathcal X}_1(\mathcal Q)} (2^{-2(n-1)}|Q_0|)^{1/2}2^{-\delta_2s}.
\]
Since
\begin{equation}\label{eq:RM-new-log}
    \log(2+u_0)\leq C_d(1+s),
\end{equation}
the Rademacher--Menshov inequality, applied to prefixes
and then to their differences, yields
\begin{equation}\label{eq:maximal-L2-improved}
    \|\mathcal R_{s,n}^{\sharp}b\|_2
    \leq C_d(1+s)2^{-(n-1)}2^{-\delta_2s}
       \|\Omega\|_\infty
       \|b\|_{\dot{\mathcal X}_1(\mathcal Q)}
       |Q_0|^{1/2}.
\end{equation}

The definition of $\mathcal Y_p$ and the disjointness of the stopping
cubes give
\begin{equation}\label{eq:Yp-local-Lp}
    \|h\|_{L^p(3Q_0)}
    \leq C_d|Q_0|^{1/p}\|h\|_{\mathcal Y_p(\mathcal Q)}.
\end{equation}
Indeed, use the $L^\infty$ bound outside the stopping shadow and
$\|h\mathbf1_L\|_p\leq C_d|L|^{1/p}\|h\|_{\mathcal Y_p}$
on each stopping cube $L$.
To obtain the quantitative testing bound, we estimate the pairing
directly. First, for every $g\in\mathcal Y_1(\mathcal Q)$ and
every $s\geq1$, the positive-kernel majorant gives
\begin{equation}\label{eq:positive-Y1-pairing}
\begin{aligned}
    \frac1{|Q_0|}
    \int_{\mathbb R^d}
       \mathcal R_{s,n}^{\sharp}b(x)|g(x)|\,dx
    \leq{}&
    C_d2^{-2(n-1)}
    \|\Omega\|_\infty
    \|b\|_{\dot{\mathcal X}_1(\mathcal Q)}
    \|g\|_{\mathcal Y_1(\mathcal Q)}.
\end{aligned}
\end{equation}
By the triangle inequality and the physical localization
identities,
\[
    \mathcal R_{s,n}^{\sharp}b
    \leq
    \begin{cases}
    \displaystyle
    \sum_j\sum_{Q\in\mathfrak V_{j-s}^{n,\iota}}
       |T_jb_Q|,
       &s>1,\\[6pt]
    \displaystyle
    \sum_j
    \sum_{(Q,J)\in\widetilde{\mathfrak V}_{j-1}^{n,\iota}}
       |T_jb_{Q,J}|,
       &s=1.
    \end{cases}
\]
We estimate the pairing of each term separately.
Suppose that $\operatorname{supp}a\subseteq Q$ and
$\ell(Q)=2^{j-s}$.
For $y\in Q$ and $z\in\widehat Q$, the inequality $s\geq1$
ensures that $|y-z|_\infty\leq C_d2^j$.
Thus there is a cube $R_{j,z}$ containing $z$ and
the support of $K_j(\cdot-y)$, with
$|R_{j,z}|\leq C_d2^{jd}$.
Consequently,
\[
\begin{aligned}
    \int_{\mathbb R^d}|K_j(x-y)||g(x)|\,dx
    \leq
    C_d\|\Omega\|_\infty
       2^{-jd}\int_{R_{j,z}}|g(x)|\,dx
    \leq C_d\|\Omega\|_\infty Mg(z).
\end{aligned}
\]
Taking the infimum over $z\in\widehat Q$ and using Fubini,
we obtain
\[
\begin{aligned}
    \int_{\mathbb R^d}|T_ja(x)||g(x)|\,dx
    &\leq
    C_d\|\Omega\|_\infty\|a\|_1
       \inf_{z\in\widehat Q}Mg(z)\\
    &\leq
    C_d\|\Omega\|_\infty\|a\|_1
       \|g\|_{\mathcal Y_1(\mathcal Q)}.
\end{aligned}
\]
Apply this estimate with $a=b_Q$ for $s>1$ and with
$a=b_{Q,J}$ for $s=1$.
Summing the atomic masses by
\eqref{eq:packing-bQ-revised} and
\eqref{eq:packing-truncated-atoms-s1}, and dividing by
$|Q_0|$, proves \eqref{eq:positive-Y1-pairing}.
No norm estimate for the maximal operator is used here.

Fix $1<p\leq2$ and $s>1$.
By homogeneity, assume
$\|h\|_{\mathcal Y_p(\mathcal Q)}=1$.
For $\lambda>0$, write
\[
    h_{\leq\lambda}
       :=h\mathbf1_{\{|h|\leq\lambda\}},
    \qquad
    h_{>\lambda}
       :=h\mathbf1_{\{|h|>\lambda\}}.
\]
By \eqref{eq:Yp-local-Lp},
\[
\begin{aligned}
    \|h_{\leq\lambda}\|_2^2
    \leq
    \lambda^{2-p}\int_{3Q_0}|h|^p
    \leq C_d\lambda^{2-p}|Q_0|.
\end{aligned}
\]
Moreover,
\[
    M(h_{>\lambda})
    \leq
    \lambda^{1-p}(M_ph)^p,
\]
and hence
\[
    \|h_{>\lambda}\|_{\mathcal Y_1(\mathcal Q)}
    \leq \lambda^{1-p}.
\]
To verify the part of this norm outside the stopping shadow,
observe that $|h|\leq1$ there. If $\lambda\geq1$,
$h_{>\lambda}$ vanishes there; if $0<\lambda<1$, its
absolute value is at most $1\leq\lambda^{1-p}$.

Using Cauchy--Schwarz and
\eqref{eq:maximal-L2-improved} for $h_{\leq\lambda}$,
and \eqref{eq:positive-Y1-pairing} for $h_{>\lambda}$,
we obtain
\[
\begin{aligned}
    \frac1{|Q_0|}
      \int_{\mathbb R^d}
         \mathcal R_{s,n}^{\sharp}b\,|h|
    \leq{}
    C_d\|\Omega\|_\infty
       \|b\|_{\dot{\mathcal X}_1(\mathcal Q)}
    \Big[
       2^{-(n-1)}(1+s)2^{-\delta_2s}
          \lambda^{1-p/2}
       +2^{-2(n-1)}\lambda^{1-p}
    \Big].
\end{aligned}
\]
Choose
\[
    \lambda
    :=\left(\frac{2^{\delta_2s}}{1+s}\right)^{2/p}.
\]
Since $2(p-1)/p=2/p'$, it follows that
\[
\begin{aligned}
    \frac1{|Q_0|}
      \int_{\mathbb R^d}
         \mathcal R_{s,n}^{\sharp}b\,|h|
    \leq{}&
    C_d2^{-(n-1)}
       (1+s)^{2/p'}2^{-2\delta_2s/p'}
       \|\Omega\|_\infty
       \|b\|_{\dot{\mathcal X}_1(\mathcal Q)}.
\end{aligned}
\]
If $p'>s$, then
\[
    (1+s)^{2/p'}\leq(2p')^{2/p'}\leq C.
\]
If $p'\leq s$, then $2/p'\leq1$ gives
\[
    (1+s)^{2/p'}
    \leq(2p')^{2/p'}(s/p')^{2/p'}
    \leq C\,s/p'.
\]
Thus, in either case,
\[
    (1+s)^{2/p'}2^{-2\delta_2s/p'}
    \leq C_d2^{-\delta_2s/p'},
\]
where we used
$\sup_{t\geq1}t\,2^{-\delta_2t}<\infty$
in the second case.
Restoring the normalization of $h$ gives
\begin{equation}\label{eq:paired-fixed-separation-linear}
\begin{aligned}
    \frac1{|Q_0|}
    \int_{\mathbb R^d}
       \mathcal R_{s,n}^{\sharp}b\,|h|
    \leq{}
    C_d2^{-(n-1)}2^{-c_ds/p'}
       \|\Omega\|_\infty
       \|b\|_{\dot{\mathcal X}_1(\mathcal Q)}
       \|h\|_{\mathcal Y_p(\mathcal Q)},
\end{aligned}
\end{equation}
where $c_d=\delta_2/2$ is admissible.

For $s=1$, use \eqref{eq:positive-Y1-pairing} and
$\|h\|_{\mathcal Y_1}\leq\|h\|_{\mathcal Y_p}$.
This proves \eqref{eq:paired-fixed-separation-linear}
also for $s=1$, after increasing the dimensional constant.

Summing \eqref{eq:paired-fixed-separation-linear} in $n,s$
and the finitely many grid and child indices yields
\begin{equation}\label{eq:first-testing-linear-pprime}
    |\mathcal Q_{t_1}^{t_2}(b,h)|
    \leq
    C_dp'\|\Omega\|_\infty
       \|b\|_{\dot{\mathcal X}_1(\mathcal Q)}
       \|h\|_{\mathcal Y_p(\mathcal Q)}.
\end{equation}
Indeed,
\[
    \sum_{n\geq1}2^{-(n-1)}=2,
    \qquad
    \sum_{s\geq1}2^{-c_ds/p'}
    =\frac1{2^{c_d/p'}-1}
    \leq\frac{p'}{c_d\log2}.
\]

It remains to verify the second localized testing estimate.
By homogeneity, the first estimate in
\cite[(1.9)]{MR4245601} gives a weak \(L^r\) bound
with constant \(C_d\max\{r,r'\}\|\Omega\|_\infty\).
The reciprocal truncations used there give the same bound
for \(T_\Omega^\ast\): applying the estimate to \(f(a\,\cdot)\)
and rescaling yields the bound for
\[
    \sup_{\varepsilon>0}
    \left|
      \int_{\varepsilon<|y|<a^2/\varepsilon}
        \frac{\Omega(y/|y|)}{|y|^d}f(x-y)\,dy
    \right|,
\]
uniformly in \(a>0\).
Letting \(a\to\infty\) for bounded, compactly supported \(f\),
and using Fatou's lemma for distribution functions, gives
\[
    \|T_\Omega^\ast f\|_{L^{r,\infty}}
    \leq C_dr\|\Omega\|_\infty\|f\|_r,
    \qquad 2\leq r<\infty.
\]
The comparison with radial truncations established in
Section~\ref{subsec:dyadic-abstract-reduction}, together with
the $L^r$ boundedness of the Hardy--Littlewood maximal operator,
therefore yields
\[
    \|T_{\Omega,\ast}f\|_{L^{r,\infty}}
    \leq C_dr\|\Omega\|_\infty\|f\|_r,
    \qquad 2\leq r<\infty.
\]

For every measurable set $E$ of finite measure,
the weak-to-strong embedding gives
\[
    \|F\|_{L^{p'}(E)}
    \leq
    2^{1/p'}|E|^{1/(2p')}
       \|F\|_{L^{2p',\infty}(\mathbb R^d)}.
\]
Applying the preceding weak-type estimate at $r=2p'$,
we obtain, for every cube $R$,
\[
\begin{aligned}
    \|T_{\Omega,\ast}(h\mathbf1_R)\|_{L^{p'}(3R)}
    &\leq
    C_dp'\|\Omega\|_\infty
       |3R|^{1/(2p')}\|h\mathbf1_R\|_{2p'}\\
    &\leq
    C_dp'\|\Omega\|_\infty
       |R|^{1/p'}\|h\|_\infty.
\end{aligned}
\]

For each stopping cube $L\in\mathcal Q$, the definition of
$\mathcal Y_p(\mathcal Q)$ implies
\[
    \|b\mathbf1_{3L}\|_p
    \leq
    C_d|L|^{1/p}
       \|b\|_{\dot{\mathcal X}_p(\mathcal Q)}.
\]
Indeed, for every $z\in\widehat L$, a cube containing
both $3L$ and $z$ can be chosen with measure at most
$C_d|L|$. Taking the infimum of $M_pb(z)$ over
$\widehat L$ proves the estimate.
Also, \eqref{eq:Yp-local-Lp} gives
\[
    \|b\|_{L^p(3Q_0)}
    \leq
    C_d|Q_0|^{1/p}
       \|b\|_{\dot{\mathcal X}_p(\mathcal Q)}.
\]

The kernel support condition implies
\[
    \operatorname{supp}
       T[K^\Omega]_{t_1}^{\,t_2\wedge s_R}
          (h\mathbf1_R)
    \subseteq3R.
\]
Thus the definition of the localized form and
H\"older's inequality yield
\[
\begin{aligned}
    |\mathcal Q_{t_1}^{t_2}(h,b)|
    \leq \frac1{|Q_0|}\Bigg[
       \|T_{\Omega,\ast}(h\mathbf1_{Q_0})\|_{L^{p'}(3Q_0)}
          \|b\|_{L^p(3Q_0)}
       +\sum_{\substack{L\in\mathcal Q\\L\subset Q_0}}
          \|T_{\Omega,\ast}(h\mathbf1_L)\|_{L^{p'}(3L)}
          \|b\|_{L^p(3L)}
    \Bigg].
\end{aligned}
\]
Combining the preceding estimates and using the
disjointness of the stopping cubes, we conclude that
\begin{equation}\label{eq:second-testing-linear-pprime}
\begin{aligned}
    |\mathcal Q_{t_1}^{t_2}(h,b)|
    &\leq
    C_dp'\|\Omega\|_\infty
       \|h\|_{\mathcal Y_\infty(\mathcal Q)}
       \|b\|_{\dot{\mathcal X}_p(\mathcal Q)}
       \frac{|Q_0|+
          \sum_{L\in\mathcal Q,\ L\subset Q_0}|L|}
            {|Q_0|}\\
    &\leq
    C_dp'\|\Omega\|_\infty
       \|h\|_{\mathcal Y_\infty(\mathcal Q)}
       \|b\|_{\dot{\mathcal X}_p(\mathcal Q)}.
\end{aligned}
\end{equation}

Apply Theorem~\ref{thm3.6} with the fixed exponent $r=2$.
The global $L^2$ maximal-truncation norm is bounded by
$C_d\|\Omega\|_\infty$, while
\eqref{eq:first-testing-linear-pprime} and
\eqref{eq:second-testing-linear-pprime} bound both localized
testing constants by $C_dp'\|\Omega\|_\infty$.
The abstract theorem introduces no further dependence on $p$.
Consequently,
\[
    \|T_{\Omega,\ast}\|_{(1,p)\text{-}\mathrm{sparse}}
    \leq C_dp'\|\Omega\|_{L^\infty(S^{d-1})},
    \qquad 1<p\leq2.
\]

To pass from the dyadic maximal truncation to the radial one, let $f$
and $g$ be bounded and compactly supported. The pointwise comparison
at the beginning of this section and the standard sparse bound for the
Hardy--Littlewood maximal operator give
\begin{align*}
    \bigl|\langle T_{\Omega}^{\ast}f,g\rangle\bigr|
    &\lesssim_d
    \bigl\langle
        T_{\Omega,\ast}f,|g|
    \bigr\rangle
    +
    \|\Omega\|_{L^\infty(S^{d-1})}
    \langle M|f|,|g|\rangle
    \\
    &\lesssim_d
    p'\|\Omega\|_{L^\infty(S^{d-1})}
    \sup_{\mathcal S}
    \Lambda_{\mathcal S,1,p}(f,g),
\end{align*}
where the supremum is taken over all \(1/2\)-sparse collections.
For completeness, this supremum is finite. Indeed, if
\(\{E_Q:Q\in\mathcal S\}\) witnesses the sparseness of \(\mathcal S\),
then, choosing \(1<a<p'\),
\begin{equation}\label{eq:sparse-supremum-finite}
\begin{aligned}
    \Lambda_{\mathcal S,1,p}(f,g)
    \leq
    2\sum_{Q\in\mathcal S}
       \int_{E_Q}M|f|\,M_p|g|
    \leq
    2\int_{\mathbb R^d}M|f|\,M_p|g|
    \lesssim_{a,p}
    \|f\|_{L^a}\|g\|_{L^{a'}}
    <\infty.
\end{aligned}
\end{equation}
Here we used \(a'>p\), together with H\"older's inequality and the
standard bounds for \(M\) and \(M_p\). Hence we may choose a
\(1/2\)-sparse collection \(\mathcal S\) such that
\[
    \sup_{\mathcal S'}
       \Lambda_{\mathcal S',1,p}(f,g)
    \leq
    2\Lambda_{\mathcal S,1,p}(f,g).
\]
This proves
\[
    \|T_{\Omega}^{\ast}\|_{(1,p)\text{-}\mathrm{sparse}}
    \lesssim_d
    p'\|\Omega\|_{L^\infty(S^{d-1})},
    \qquad 1<p\leq 2.
\]

Finally, let \(p\geq2\). The estimate just proved at \(p=2\) gives a
sparse \((1,2)\)-bound with a dimensional constant. Since normalized
local Lebesgue averages are increasing in their exponent,
\[
    \langle|g|\rangle_{2,Q}
    \leq
    \langle|g|\rangle_{p,Q},
    \qquad p\geq2,
\]
the same sparse family gives a \((1,p)\)-bound. In this range
\(1<p'\leq2\), so a dimensional constant is bounded by
\(C_dp'\). Therefore, for every \(1<p<\infty\),
\[
    \|T_{\Omega}^{\ast}\|_{(1,p)\text{-}\mathrm{sparse}}
    \lesssim_d
    p'\|\Omega\|_{L^\infty(S^{d-1})}.
\]

\section{Extension to angular kernels in
\texorpdfstring{\(L^{q,1}\log L\)}{Lq1 log L}}
\label{sec:Lq1logL-extension}

Throughout this section, \(1<q<\infty\).  We use the distribution
function \(\mu_\Omega\), the Lorentz norm \(L_q(\Omega)\), and the
homogeneous Lorentz--Zygmund functional introduced in
\eqref{eq:Lq1logL-functional}; in particular,
\[
    L_q(\Omega)
    \leq
    \|\Omega\|_{L^{q,1}\log L(S^{d-1})}.
\]
We continue to assume the cancellation condition
\begin{equation}\label{eq:Omega-global-cancellation}
    \int_{S^{d-1}}\Omega(\theta)\,d\sigma(\theta)=0.
\end{equation}
For an angular kernel \(\Theta\), we write
\[
    \mathcal Q_{\Theta,t_1}^{t_2}
    :=
    \mathcal Q[K^\Theta]_{t_1}^{t_2}.
\]
The cancellation condition \eqref{eq:Omega-global-cancellation} is essential. If
\(\Omega\) has nonzero mean, its constant angular component produces a
homogeneous kernel with nonzero mass on every annulus, and the maximal
partial sums are not uniformly bounded.  Thus the desired theorem for
the rough singular integral cannot be recovered by treating that
constant component as a harmless maximal-function error.

\subsection{The first localized testing estimate}
\label{subsec:first-localized-LqlogL}

\begin{prop}\label{prop:first-localized-LqlogL}
Let
\[
    \Omega\in L^{q,1}\log L(S^{d-1})
\]
satisfy \eqref{eq:Omega-global-cancellation}, and let \(q'\leq p<\infty\).
For every \(b\in\dot{\mathcal X}_1(\mathcal Q)\) and
\(h\in\mathcal Y_p(\mathcal Q)\), the following estimate
holds uniformly over stopping collections \(\mathcal Q\)
with top \(Q_0\) and bounded measurable integer-valued
truncation functions \(t_1\leq t_2\):
\begin{equation}\label{eq:first-localized-q-final}
\begin{aligned}
    \bigl|\mathcal Q_{\Omega,t_1}^{t_2}(b,h)\bigr|
    \lesssim_{d,q}{}&
    p'
    \|\Omega\|_{L^{q,1}\log L(S^{d-1})}
    \|b\|_{\dot{\mathcal X}_1(\mathcal Q)}
    \|h\|_{\mathcal Y_p(\mathcal Q)}.
\end{aligned}
\end{equation}
\end{prop}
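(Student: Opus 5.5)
Write $\Omega=\sum_{m\ge0}\Omega_m$ by a scale-dependent level decomposition, adapted to the separation parameter $s$. Normalize $L_q(\Omega)=1$. For fixed $s\ge1$, split
\[
    \Omega=\Omega^{(s)}_{\le}+\Omega^{(s)}_{>},
    \qquad
    \Omega^{(s)}_{\le}:=\Omega\mathbf 1_{\{|\Omega|\le 2^{\varepsilon s}\}}-c_s,
\]
where $c_s$ is the mean of $\Omega\mathbf 1_{\{|\Omega|\le 2^{\varepsilon s}\}}$. Then $\Omega^{(s)}_{>}=\Omega\mathbf 1_{\{|\Omega|>2^{\varepsilon s}\}}+c_s$, and both pieces have vanishing average by \eqref{eq:Omega-global-cancellation}. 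We have $|c_s|\le\int_{\{|\Omega|>2^{\varepsilon s}\}}|\Omega|\,d\sigma$. The localized form is linear in the kernel family, so the $s$-th term in the expansion \eqref{equ3.13} splits accordingly.

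\textbf{Bounded part.} The bounded piece $\Omega^{(s)}_\le$ goes through Section~\ref{sec:sparse-domination} verbatim. Every step there (occupancy decomposition, Rademacher--Menshov, Proposition~\ref{prop:microlocal-endpoints}, Lemma~\ref{lem:positive-kernel-L3}, and the amplitude splitting of $h$) is linear in $\Omega$ and bounded by $\|\Omega\|_\infty$. This gives the fixed-separation bound \eqref{eq:paired-fixed-separation-linear} with a factor $2^{\varepsilon s}2^{-c_ds/p'}$. That factor is summable only if $\varepsilon<c_d/p'$, which is unacceptable, since the threshold must not depend on $p$ so badly. So instead I would refine the argument using the only angular input actually needed. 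In the microlocal estimates the factor $\|\Omega\|_\infty\sigma(E_v^s)$ can be replaced by $\int_{E_v^s}|\Omega|$. In the $L^2$ sector estimate for $\mathcal I_3$, Cauchy--Schwarz over sectors yields $\sum_v(\int_{E_v^s}|\Omega|)^2\le \sigma(E_v^s)^{1/q'}\ldots$. By H\"older with $L^{q}$ on caps of measure $2^{-s\gamma(d-1)}$, one keeps a decay $2^{-\delta s}$ with $\delta$ depending on $q$, uniformly for truncations at height $2^{\varepsilon s}$ with $\varepsilon$ small depending on $d,q$ only. Thus the bounded part contributes $2^{-c_{d,q}s/p'}$, summing to $C_{d,q}p'L_q(\Omega)$.

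\textbf{Large part.} This piece should not be treated via decay in $s$ at all. Use only size: by the positive-kernel bound behind \eqref{eq:positive-Y1-pairing}, with $|K_j|$ replaced by $2^{-jd}|\Omega^{(s)}_>|(\cdot/|\cdot|)\mathbf 1_{B(0,C2^j)}$, we have for $y\in Q$
\[
\int|K_j(x-y)||h(x)|\,dx\lesssim \|\Omega^{(s)}_>\|_{L^q(S^{d-1})}\,\inf_{z\in\widehat Q} M_{q'}h(z)
\]
by H\"older in polar coordinates. Since $p\ge q'$, this infimum is bounded by $\|h\|_{\mathcal Y_p}$; this is exactly where the endpoint $p=q'$ enters. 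The Lorentz refinement $L^{q,1}$ is needed to sum over the level sets. Write $\Omega^{(s)}_>$ in dyadic layers $\{2^m<|\Omega|\le2^{m+1}\}$, $m\ge\varepsilon s$, and use $\|\Omega\mathbf 1_{\text{layer}}\|_{L^q}\approx 2^m\mu_\Omega(2^m)^{1/q}$. Summing the positive bound over $s\le m/\varepsilon$, together with the packing $\sum_{Q}\|b_Q\|_1\lesssim\|b\|_{\dot{\mathcal X}_1}|Q_0|$ (after summing the $n$-generations, or simply directly, since for fixed $s$ each atom appears once), gives
\[
\sum_m \frac{m}{\varepsilon}\,2^m\mu_\Omega(2^m)^{1/q}\lesssim_q\|\Omega\|_{L^{q,1}\log L}.
\]
The constants $c_s$ are handled in the same way, since $|c_s|\lesssim\sum_{m\ge\varepsilon s}2^m\mu_\Omega(2^m)\le\sum_{m}2^m\mu_\Omega(2^m)^{1/q}$.

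The hidden obstacle is the $p$-dependence of the first part. I expect the main difficulty to be confirming that Lai's $\mathcal I_4$ estimates and the $L^3$ bounds, rerun with $\|\Omega\|_\infty\le 2^{\varepsilon s}$ where $\varepsilon\ll\delta_2$ is fixed independent of $p$, still leave net decay. The resulting bound for the bounded part is $2^{(\varepsilon-\delta_2)s}$ before the amplitude splitting. After the splitting it becomes $2^{\varepsilon s}2^{-2\delta_2 s/p'}$, which is not summable for large $p'$.

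The fix I would try is to apply the amplitude decomposition to the product: choose $\lambda$ to balance $2^{-(n-1)}(1+s)2^{(\varepsilon-\delta_2)s}\lambda^{1-p/2}$ against $2^{-2(n-1)}\lambda^{1-p}\|\Omega^{(s)}_\le\|_{L^q}$. The $\mathcal Y_1$-type term then uses the $L^q$ norm, which is bounded uniformly in $s$ by $L_q(\Omega)$ (this requires $p\ge q'$, via $M_{q'}$ in place of $M$), rather than $2^{\varepsilon s}$. This yields $2^{-c s/p'}$ with $c=2(\delta_2-\varepsilon)$, and summing over $s$ gives the factor $p'$.
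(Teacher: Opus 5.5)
Your splitting is the one the paper uses: truncate at $\lambda_s=L_q(\Omega)2^{\varepsilon s}$, recenter both pieces by $c_s$, run the bounded-kernel estimate \eqref{eq:paired-fixed-separation-linear} on $\Omega_s^{\mathrm g}$, and treat $\Omega_s^{\mathrm b}$ by the size bound \eqref{eq:bad-single-atom} with $\inf_{\widehat Q}M_{q'}h\le\|h\|_{\mathcal Y_p}$, followed by a layer-cake summation of the tails. Your remark that, for fixed $s$, each atom meets only one kernel scale (so the bad part needs no occupancy decomposition) is a legitimate simplification.

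The gap is in what you do next. The obstacle you identify is not real. The condition $p\ge q'$ is equivalent to $p'\le q$, so $p_\sharp:=\max\{2,p'\}\le\max\{2,q\}$ throughout the range of the proposition. Take $\varepsilon=c_d/(4p_\sharp)$, as the paper does, or $\varepsilon=c_d/(4\max\{2,q\})$ if you want it independent of $p$. Then the good part at separation $s$ is $\lesssim_d 2^{-3c_ds/(4p_\sharp)}L_q(\Omega)$. It sums to $C_dp_\sharp L_q(\Omega)\lesssim p'\|\Omega\|_{L^{q,1}\log L}$. The bad part costs $\varepsilon^{-1}\int_0^\infty\log(e+t/L_q(\Omega))\mu_\Omega(t)^{1/q}\,dt\lesssim_d(p_\sharp/q)\|\Omega\|_{L^{q,1}\log L}\le2\|\Omega\|_{L^{q,1}\log L}$. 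The ``large $p'$'' regime never occurs here, so the route you discarded already proves the proposition.

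The replacement you propose instead does not work. To pair $h_{>\lambda}$ against the $L^q$ norm of $\Omega_{\le}^{(s)}$ you must use $M_{q'}$. The available gain is then $M_{q'}(h_{>\lambda})\le\lambda^{1-p/q'}(M_ph)^{p/q'}$, that is $\lambda^{1-p/q'}$, not $\lambda^{1-p}$. When $q>2$, the splitting is actually needed since $q'<2$, and at $p=q'$ this factor equals $1$. Balancing against $(1+s)2^{(\varepsilon-\delta_2)s}\lambda^{1-p/2}$ then gives no decay in $s$ at all. For $q'<p<2$ the resulting decay exponent is proportional to $p-q'$, so the $s$-sum blows up as $p\downarrow q'$. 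That is exactly the endpoint the proposition is meant to include.

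Your other suggestion is not substantiated either. You propose rerunning the microlocal and $L^3$ bounds with $\int_{E_v^s}|\Omega|$ in place of $\|\Omega\|_\infty\sigma(E_v^s)$. But Lemma~\ref{lem:positive-kernel-L3} and the transferred $\mathcal I_4$ estimates rely on the pointwise majorant $|K_j|\lesssim\|\Omega\|_\infty2^{-jd}\mathbf 1_{B(0,C2^j)}$ in an essential way. Drop both fixes and keep a threshold depending on $p_\sharp$, or on $q$.
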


\begin{proof}
Set
\[
    p_{\sharp}:=\max\{2,p'\}.
\]

We retain the occupancy decomposition and the geometric reduction
from Section~\ref{subsec:occupancy-rademacher-menshov}.
The measure and atomic packing bounds established there and in
\eqref{eq:packing-bQ-revised} and \eqref{eq:packing-truncated-atoms-s1}
are independent of \(p\).

For a fixed translated cube family, \(s\geq1\), \(n\geq1\), and
\(1\leq\iota\leq2^d\), denote the corresponding maximal localized
contribution by
\[
\begin{aligned}
    \mathcal A_{s,n,\iota}^{\Theta}b(x)
    &:=
    \sup_{\ell\in\mathbb Z}
    \left|
      \sum_{\substack{
        J\in\mathcal I_{s,\iota}^{\sharp,n}\\
        s_J\geq\ell}}
      T_J^\Theta b_{s_J-s}(x)\mathbf1_{J^\iota}(x)
    \right|,
\end{aligned}
\]
where \(\Theta\) is an angular kernel and
\[
    T_J^\Theta g
    :=T_{s_J}^\Theta
       \bigl(g\mathbf1_{\frac12J}\bigr).
\]
By the pointwise reduction in
Section~\ref{subsec:occupancy-rademacher-menshov}, the doubly
truncated expression is bounded by a dimensional multiple of this
maximal partial sum. The finite sums over \(\vec w\) and \(\iota\)
 will be absorbed into the
dimensional constant.

For every bounded mean-zero angular kernel $\Theta$,
the paired estimate
\eqref{eq:paired-fixed-separation-linear}, summed in the
occupancy generation, gives
\begin{equation}\label{eq:bounded-fixed-s-used}
    \frac1{|Q_0|}\sum_{n\geq1}
    \int_{\mathbb R^d}
       \mathcal A_{s,n,\iota}^{\Theta}b\,|h|
    \leq
    C_d2^{-c_ds/p_\sharp}
       \|\Theta\|_\infty
       \|b\|_{\dot{\mathcal X}_1}
       \|h\|_{\mathcal Y_p}.
\end{equation}
For $1<p\leq2$, use $p_\sharp=p'$.
For $p\geq2$, use the estimate at $p=2$ and
$\|h\|_{\mathcal Y_2}\leq\|h\|_{\mathcal Y_p}$, so that $p_\sharp=2$.
The adjacent separation $s=1$ is included by changing the constant.

If $\Omega=0$, there is nothing to prove.  We therefore assume
\(\Omega\neq0\), and set
\[
    \varepsilon:=\frac{c_d}{4p_{\sharp}},
    \qquad
    \lambda_s:=L_q(\Omega)2^{\varepsilon s},
    \qquad s\geq1.
\]
For each \(s\), define
\[
    c_s
    :=
    \int_{S^{d-1}}
       \Omega(\theta)
       \mathbf1_{\{|\Omega|\leq\lambda_s\}}(\theta)
       \,d\sigma(\theta)
\]
and decompose
\begin{align}
    \Omega_s^{\mathrm g}
    &:=
    \Omega\mathbf1_{\{|\Omega|\leq\lambda_s\}}-c_s,
    \label{eq:mean-zero-good-part}\\
    \Omega_s^{\mathrm b}
    &:=
    \Omega\mathbf1_{\{|\Omega|>\lambda_s\}}+c_s.
    \label{eq:mean-zero-bad-part}
\end{align}
Here constants are regarded as constant functions on the normalized
sphere.  Then
\[
    \Omega
    =
    \Omega_s^{\mathrm g}+\Omega_s^{\mathrm b}
\]
for every \(s\). 
Let $\mathcal Q_{\mathrm g}(b,h)$ and
$\mathcal Q_{\mathrm b}(b,h)$ denote the total contributions
obtained by inserting $\Omega_s^{\mathrm g}$ and
$\Omega_s^{\mathrm b}$, respectively, at scale separation $s$.
Moreover, by
\eqref{eq:Omega-global-cancellation},
\[
    c_s
    =
    -\int_{S^{d-1}}
       \Omega(\theta)
       \mathbf1_{\{|\Omega|>\lambda_s\}}(\theta)
       \,d\sigma(\theta),
\]
and hence
\[
    \int_{S^{d-1}}\Omega_s^{\mathrm g}\,d\sigma
    =
    \int_{S^{d-1}}\Omega_s^{\mathrm b}\,d\sigma
    =0.
\]
The normalized surface measure and H\"older's inequality give
\begin{equation}\label{eq:recentered-size-bounds}
    \|\Omega_s^{\mathrm g}\|_{L^\infty}
    \leq2\lambda_s,
    \qquad
    \|\Omega_s^{\mathrm b}\|_{L^q}
    \leq
    2\|\Omega\mathbf1_{\{|\Omega|>\lambda_s\}}\|_{L^q}.
\end{equation}
For the first inequality, use
\[
    |c_s|
    \leq
    \int_{\{|\Omega|\leq\lambda_s\}}|\Omega|\,d\sigma
    \leq\lambda_s;
\]
for the second, use the preceding representation of \(c_s\).
The constants \(c_s\) are needed because the two raw restrictions of
\(\Omega\) need not have mean zero; they restore cancellation in both
pieces without creating an additional constant-kernel term.

\medskip
\noindent
\emph{The good angular part.}
For each fixed \(s\), the contribution is linear in the angular
kernel, so the \(s\)-dependent good--bad decomposition is legitimate.

Since $\Omega_s^{\mathrm g}$ is bounded and has mean zero,
\eqref{eq:bounded-fixed-s-used}, together with
$\varepsilon=c_d/(4p_\sharp)$, gives
\begin{equation}
   \begin{aligned}
    |\mathcal Q_{\mathrm g}(b,h)|
    &\leq
    C_dL_q(\Omega)
       \sum_{s\geq1}2^{-3c_ds/(4p_\sharp)}
       \|b\|_{\dot{\mathcal X}_1}
       \|h\|_{\mathcal Y_p}\\
    &\leq
    C_dp_\sharp L_q(\Omega)
       \|b\|_{\dot{\mathcal X}_1}
       \|h\|_{\mathcal Y_p}\\
    &\leq
    C_dp'\|\Omega\|_{L^{q,1}\log L}
       \|b\|_{\dot{\mathcal X}_1}
       \|h\|_{\mathcal Y_p}.
    \label{eq:good-part-P3-corrected}
   \end{aligned} 
\end{equation}

Here we used
\[
    \sum_{s\geq1}2^{-as/p_\sharp}\leq C_ap_\sharp,
    \qquad
    p_\sharp=\max\{2,p'\}\leq2p'.
\]

\medskip
\noindent
\emph{The bad angular part.}
For this part, we do not use the Rademacher--Menshov inequality.  We
instead dominate each maximal partial sum by the corresponding
absolute sum.

To include the exceptional scale separation \(s=1\) without ambiguity,
let \(\mathscr P_{s,n,\iota}\) denote the following collection of pairs:
\[
    \mathscr P_{s,n,\iota}
    :=
    \begin{cases}
    \displaystyle
    \bigcup_{j\in\mathbb Z}
       \{(Q,J(Q)):
         Q\in\mathfrak V_{j-s}^{n,\iota}\},
       &s\geq2,\\[8pt]
    \displaystyle
    \bigcup_{j\in\mathbb Z}
       \widetilde{\mathfrak V}_{j-1}^{n,\iota},
       &s=1.
    \end{cases}
\]
For \((Q,J)\in\mathscr P_{s,n,\iota}\), put
\[
    a_{Q,J}^{(s)}
    :=
    \begin{cases}
       b_Q,&s\geq2,\\
       b_{Q,J},&s=1.
    \end{cases}
\]
In both cases,
\[
    \operatorname{supp}a_{Q,J}^{(s)}\subseteq Q,
    \qquad
    \|a_{Q,J}^{(s)}\|_{L^1}\leq\|b_Q\|_{L^1}.
\]
By \eqref{eq:packing-bQ-revised} for \(s\geq2\) and
\eqref{eq:packing-truncated-atoms-s1} for \(s=1\),
together with \(2^{-2(n-1)}=4\,2^{-2n}\), we obtain
\begin{equation}\label{eq:bad-atom-packing}
    \sum_{(Q,J)\in\mathscr P_{s,n,\iota}}
       \|a_{Q,J}^{(s)}\|_{L^1}
    \lesssim_d
    2^{-2n}\|b\|_{\dot{\mathcal X}_1(\mathcal Q)}|Q_0|.
\end{equation}

For every \(x\),
\[
\begin{aligned}
    \mathcal A_{s,n,\iota}^{\Omega_s^{\mathrm b}}b(x)
    &\leq
    \sum_{(Q,J)\in\mathscr P_{s,n,\iota}}
       \bigl|
         T_{s_J}^{\Omega_s^{\mathrm b}}
         a_{Q,J}^{(s)}(x)
       \bigr|
       \mathbf1_{J^\iota}(x).
\end{aligned}
\]
We next record the single-atom estimate used to sum this expression.
For \((Q,J)\in\mathscr P_{s,n,\iota}\), Fubini's theorem
and H\"older's inequality give
\begin{equation}\label{eq:bad-single-atom}
\begin{aligned}
    \int_{\mathbb R^d}
       \bigl|
         T_{s_J}^{\Omega_s^{\mathrm b}}
         a_{Q,J}^{(s)}(x)
       \bigr|
       |h(x)|\,dx\lesssim_d
    \|\Omega_s^{\mathrm b}\|_{L^q(S^{d-1})}
    \|a_{Q,J}^{(s)}\|_{L^1}
    \inf_{z\in\widehat Q}M_{q'}h(z).
\end{aligned}
\end{equation}
Indeed, the kernel size and annular support give
\[
    \|K_{s_J}^{\Omega_s^{\mathrm b}}\|_{L^q(\mathbb R^d)}
    \lesssim_d
    2^{-s_Jd/q'}
    \|\Omega_s^{\mathrm b}\|_{L^q(S^{d-1})}.
\]
For $y\in Q$ and $z\in\widehat Q$, the support of
$K_{s_J}^{\Omega_s^{\mathrm b}}(\,\cdot-y)$ is contained
in a ball $B(z,C2^{s_J})$. Hence H\"older's inequality yields
\[
\begin{aligned}
    &\int_{\mathbb R^d}
       |K_{s_J}^{\Omega_s^{\mathrm b}}(x-y)|\,|h(x)|\,dx\\
    &\quad\lesssim_d
       2^{-s_Jd/q'}
       \|\Omega_s^{\mathrm b}\|_{L^q(S^{d-1})}
       \|h\|_{L^{q'}(B(z,C2^{s_J}))}\\
    &\quad\lesssim_d
       \|\Omega_s^{\mathrm b}\|_{L^q(S^{d-1})}
       M_{q'}h(z).
\end{aligned}
\]
Integrating against $|a_{Q,J}^{(s)}(y)|$ and taking
the infimum over $z\in\widehat Q$ proves
\eqref{eq:bad-single-atom}.

Since \(p\geq q'\), monotonicity of maximal averages yields
\[
    M_{q'}h\leq M_ph.
\]
It follows from the definition of \(\mathcal Y_p(\mathcal Q)\) that
\[
    \inf_{z\in\widehat Q}M_{q'}h(z)
    \leq
    \|h\|_{\mathcal Y_p(\mathcal Q)}.
\]
Combining this observation with
\eqref{eq:bad-single-atom} and
\eqref{eq:bad-atom-packing}, dividing by \(|Q_0|\), and summing in
\(n\) and \(\iota\), we obtain
\begin{equation}\label{eq:bad-before-tail-corrected}
\begin{aligned}
    \bigl|\mathcal Q_{\mathrm b}(b,h)\bigr|
    &\lesssim_d
    \|b\|_{\dot{\mathcal X}_1(\mathcal Q)}\|h\|_{\mathcal Y_p(\mathcal Q)}
    \sum_{s\geq1}
       \|\Omega_s^{\mathrm b}\|_{L^q}\\
    &\lesssim_d
    \|b\|_{\dot{\mathcal X}_1(\mathcal Q)}\|h\|_{\mathcal Y_p(\mathcal Q)}
    \sum_{s\geq1}
       \|\Omega
         \mathbf1_{\{|\Omega|>\lambda_s\}}\|_{L^q}.
\end{aligned}
\end{equation}

It remains to sum the angular tails.  For every \(\lambda>0\), the
layer-cake representation and Minkowski's inequality imply
\begin{align*}
    \|\Omega\mathbf1_{\{|\Omega|>\lambda\}}\|_{L^q}
    &\leq
    \lambda\mu_\Omega(\lambda)^{1/q}
    +
    \int_\lambda^\infty
       \mu_\Omega(t)^{1/q}\,dt
    \leq
    2\int_{\lambda/2}^\infty
       \mu_\Omega(t)^{1/q}\,dt.
\end{align*}
The last inequality follows from the monotonicity of
\(\mu_\Omega\), since
\[
    \lambda\mu_\Omega(\lambda)^{1/q}
    \leq
    2\int_{\lambda/2}^{\lambda}
       \mu_\Omega(t)^{1/q}\,dt.
\]
Tonelli's theorem now gives
\begin{equation}\label{eq:tail-sum-corrected}
\begin{aligned}
    \sum_{s\geq1}
       \|\Omega
         \mathbf1_{\{|\Omega|>\lambda_s\}}\|_{L^q}
    &\lesssim
    \int_0^\infty
       \#\left\{
         s\geq1:
         L_q(\Omega)2^{\varepsilon s}<2t
       \right\}
       \mu_\Omega(t)^{1/q}\,dt\\
    &\lesssim
    \varepsilon^{-1}
    \int_0^\infty
       \log\left(e+\frac{t}{L_q(\Omega)}\right)
       \mu_\Omega(t)^{1/q}\,dt\\
    &\lesssim_d
    \frac{p_{\sharp}}{q}
    \|\Omega\|_{L^{q,1}\log L(S^{d-1})}.
\end{aligned}    
\end{equation}

The range \(p\geq q'\) is equivalent to \(p' \leq q\).  Therefore
\[
    \frac{p_{\sharp}}{q}
    =
    \frac{\max\{2,p'\}}q
    \leq2,
\]
where for \(q\geq2\) we use \(p_{\sharp}\leq q\), and for
\(1<q<2\) we use \(p_{\sharp}=2\).  Thus
\eqref{eq:bad-before-tail-corrected} and
\eqref{eq:tail-sum-corrected} yield
\begin{equation}\label{eq:bad-final-corrected}
    \bigl|\mathcal Q_{\mathrm b}(b,h)\bigr|
    \lesssim_d
    \|\Omega\|_{L^{q,1}\log L(S^{d-1})}
    \|b\|_{\dot{\mathcal X}_1(\mathcal Q)}\|h\|_{\mathcal Y_p(\mathcal Q)}.
\end{equation}
In particular, the bad angular part introduces no additional
\(p'\)-dependent factor.

Combining
\eqref{eq:good-part-P3-corrected} and
\eqref{eq:bad-final-corrected} proves
\eqref{eq:first-localized-q-final}.
\end{proof}

\subsection{The adjoint localized testing estimate}
\label{subsec:adjoint-LqlogL}
The second testing condition requires only $L^q$ angular
integrability. We use the classical maximal estimate
\begin{equation}\label{eq:classical-angular-Lq}
    \|T_{\Phi,\ast}\|_{L^r\to L^r}
    \leq C_{d,q,r}\|\Phi\|_{L^q(S^{d-1})},
    \qquad 1<q,r<\infty,\quad\int\Phi\,d\sigma=0;
\end{equation}
see \cite{MR837527}. This estimate applies to the dyadic pieces defined using
the smooth radial cutoff above, and to their maximal
two-sided truncations.

\begin{cor}[Second testing estimate]
\label{cor:second-localized-testing}
Let \(1<q<\infty\), let \(\Omega\in L^q(S^{d-1})\)
have mean zero, and let \(q'\leq p<\infty\).
For every \(h\in\mathcal Y_\infty(\mathcal Q)\) and
\(b\in\dot{\mathcal X}_p(\mathcal Q)\), the following
estimate holds uniformly over stopping collections
and bounded measurable integer-valued truncation
functions \(t_1\leq t_2\):
\begin{equation}\label{eq:hb-final}
    |\mathcal Q_{\Omega,t_1}^{t_2}(h,b)|
    \leq C_{d,q}\|\Omega\|_{L^q}
       \|h\|_{\mathcal Y_\infty(\mathcal Q)}
       \|b\|_{\dot{\mathcal X}_p(\mathcal Q)}.
\end{equation}
\end{cor}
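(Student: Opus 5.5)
The plan is to mimic the bounded-kernel argument for the second testing estimate, \eqref{eq:second-testing-linear-pprime}. The only change is that the quantitative weak $L^{2p'}$ bound is replaced by the strong $L^r$ bound \eqref{eq:classical-angular-Lq} at a single fixed exponent. Since $p\geq q'$, we have $p'\leq q$. So we may fix $r:=q$ (or any fixed $r\ge p'$ depending only on $q$, say $r=\max\{2,q\}$). The resulting constant then depends only on $d$ and $q$, and no factor of $p'$ is needed; this matches the statement.

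First, write the localized form via \eqref{equ3.5}:
\[
|Q_0|\,|\mathcal Q_{\Omega,t_1}^{t_2}(h,b)|
\le \int T_{\Omega,\ast}(h\mathbf1_{Q_0})\,|b|\mathbf1_{3Q_0}
+\sum_{L\in\mathcal Q,\,L\subset Q_0}\int T_{\Omega,\ast}(h\mathbf1_L)\,|b|\mathbf1_{3L}.
\]
This uses the support condition of the kernels: the linearized truncation with upper cutoff $s_R$ applied to $h\mathbf1_R$ is supported in $3R$ and is pointwise dominated by $T_{\Omega,\ast}(h\mathbf1_R)=T_\star[K^\Omega](h\mathbf1_R)$. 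For each cube $R\in\{Q_0\}\cup\{L\in\mathcal Q: L\subset Q_0\}$, apply H\"older with exponents $(p',p)$. Then use H\"older on the finite-measure set $3R$ to pass from $L^{p'}(3R)$ to $L^{r}(3R)$ with $r\ge p'$:
\[
\|T_{\Omega,\ast}(h\mathbf1_R)\|_{L^{p'}(3R)}\le |3R|^{1/p'-1/r}\|T_{\Omega,\ast}(h\mathbf1_R)\|_{L^r}
\le C_{d,q}\|\Omega\|_{L^q}|3R|^{1/p'-1/r}|R|^{1/r}\|h\|_\infty.
\]
This gives $C_{d,q}\|\Omega\|_{L^q}|R|^{1/p'}\|h\|_{\mathcal Y_\infty}$. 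For the second input, use the already established bounds $\|b\mathbf1_{3L}\|_p\le C_d|L|^{1/p}\|b\|_{\dot{\mathcal X}_p}$ and \eqref{eq:Yp-local-Lp} for $3Q_0$. Multiplying, each cube contributes $C_{d,q}\|\Omega\|_{L^q}|R|\,\|h\|_{\mathcal Y_\infty}\|b\|_{\dot{\mathcal X}_p}$. Summing with the packing bound \eqref{eq:stopping-packing} and dividing by $|Q_0|$ yields \eqref{eq:hb-final}.

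The step needing care is the applicability of \eqref{eq:classical-angular-Lq} to the dyadic, smoothly cut-off maximal operator $T_\star[K^\Omega]$ with two-sided truncations, rather than to the radial maximal truncation. The paper asserts that \eqref{eq:classical-angular-Lq} covers these pieces. If needed, I would justify it by the Duoandikoetxea--Rubio de Francia argument: the smooth partition $\psi$ yields multipliers with $|\widehat{K_k^\Omega}(\xi)|\lesssim \|\Omega\|_{L^q}\min\{|2^k\xi|,|2^k\xi|^{-\alpha}\}$ for some $\alpha=\alpha(q)>0$. The maximal function $\sup_k|\sum_{i\le k}K_i^\Omega*f|$ is then controlled via Littlewood--Paley square functions and the rough maximal operator $M_\Omega$. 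Two-sided truncations are bounded by twice the one-sided supremum. Finally, note that the constant is uniform over $p\in[q',\infty)$, because $r$ depends only on $q$ and $|3R|^{1/p'-1/r}|R|^{1/r}\le 3^d|R|^{1/p'}$.
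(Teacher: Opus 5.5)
Your proof is correct. Its core matches the paper's: localize each term of \eqref{equ3.5} to $3R$, apply H\"older, and invoke the global $L^q$ bound \eqref{eq:classical-angular-Lq} for $T_{\Omega,\ast}$.

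The packaging differs. The paper cites \cite[Lemma~4.1]{MR4245601} at the pair $(q,q')$. It then uses monotonicity, $\|h\|_{\mathcal Y_q}\le\|h\|_{\mathcal Y_\infty}$ and $\|b\|_{\mathcal Y_{q'}}\le\|b\|_{\mathcal Y_p}$ for $p\ge q'$. That lemma sits in the abstract framework \eqref{equ3.1}, which needs $\|[K]\|<\infty$. So the paper first assumes $\Omega$ bounded and then removes this by approximating with recentred truncations $\Omega_N$, using that a fixed linearization involves only finitely many kernel scales.

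You do the H\"older step by hand with the pair $(p',p)$. You put the finite-measure H\"older inequality on the operator factor ($L^{p'}(3R)\to L^q(3R)$), exactly as in the bounded-kernel estimate \eqref{eq:second-testing-linear-pprime}. This is equivalent to the paper's monotonicity step on the other factor.

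Your argument uses only the support condition, the pointwise domination by $T_{\Omega,\ast}$, and its $L^q$ bound. It never uses the size bound $\|[K]\|$. It therefore applies directly to unbounded $\Omega\in L^q$, and the form is well defined because each $K_k^\Omega\in L^1$ and $h\in L^\infty$. The paper's approximation step becomes unnecessary.

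The paper's route is shorter by citation. Yours is self-contained and makes the uniformity of the constant over $p\ge q'$ explicit.
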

\begin{proof}
First assume that \(\Omega\) is bounded.
Applying \cite[Lemma~4.1]{MR4245601} with \(r=q\), together with
\eqref{eq:classical-angular-Lq}, gives
\[
\begin{aligned}
    |\mathcal Q_{\Omega,t_1}^{t_2}(h,b)|
    &\lesssim_d
    \|T_{\Omega,\ast}\|_{L^q\to L^q}
    \|h\|_{\mathcal Y_q(\mathcal Q)}
    \|b\|_{\mathcal Y_{q'}(\mathcal Q)}\\
    &\lesssim_{d,q}
    \|\Omega\|_{L^q(S^{d-1})}
    \|h\|_{\mathcal Y_\infty(\mathcal Q)}
    \|b\|_{\dot{\mathcal X}_p(\mathcal Q)}.
\end{aligned}
\]
Here we used \(q'\leq p\), the monotonicity of the localized
norms, and the fact that \(\dot{\mathcal X}_p(\mathcal Q)\)
inherits its norm from \(\mathcal Y_p(\mathcal Q)\).
The constant is independent of \(p\geq q'\).

For general $\Omega\in L^q$, let
$\Omega_N=\Omega\mathbf1_{\{|\Omega|\leq N\}}
-\int\Omega\mathbf1_{\{|\Omega|\leq N\}}\,d\sigma$.
Then $\Omega_N\to\Omega$ in $L^q$ and in $L^1$.
For fixed bounded truncation functions, the localized forms contain
only finitely many kernel scales. At each scale, the $L^1$ norm
of the kernel difference is at most $C_d\|\Omega_N-\Omega\|_1$.
In the subtracted part, the stopping inputs are disjoint, so the
sum of their absolute convolutions is bounded by the convolution
with $|h|\mathbf1_{Q_0}$. Hence the difference of the localized
pairings is at most
\[
    \frac{C_dN_t}{|Q_0|}\|\Omega_N-\Omega\|_1
       \|h\|_\infty\|b\|_1\longrightarrow0,
\]
where $N_t<\infty$ is the number of possible scales for these
fixed truncation functions. Passing to the limit proves the claim.
\end{proof}

\subsection{Sparse domination for the dyadic maximal truncation}
\label{subsec:completion-LqlogL}

We now combine the two localized testing estimates with
Theorem~\ref{thm3.6} and then remove the boundedness assumption on
the angular kernel.

\begin{thm}[Sparse bound for the dyadic maximal truncation]
\label{thm:dyadic-LqlogL-sparse}
Let \(1<q<\infty\), let
\(\Omega\in L^{q,1}\log L(S^{d-1})\) satisfy
\eqref{eq:Omega-global-cancellation}, and let \(q'\leq p<\infty\).  Then the
dyadic maximal truncation associated with the kernel family
\(\{K_k^\Omega\}_{k\in\mathbb Z}\) satisfies
\begin{equation}\label{eq:dyadic-LqlogL-sparse}
    \|T_{\Omega,\ast}\|_{(1,p)\text{-}\mathrm{sparse}}
    \lesssim_{d,q}
    p'
    \|\Omega\|_{L^{q,1}\log L(S^{d-1})}.
\end{equation}
\end{thm}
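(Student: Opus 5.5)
The plan is to apply the abstract principle Theorem~\ref{thm3.6} with $p_1=1$, $p_2=p$, first for bounded mean-zero angular kernels, and then to remove the boundedness by truncation. For bounded $\Omega$, condition \eqref{equ3.1} holds with $\|[K^\Omega]\|\lesssim_d\|\Omega\|_\infty<\infty$. Condition \eqref{equ3.3} with $r=2$ follows from \eqref{eq:classical-angular-Lq}, with $\|[K^\Omega]\|_{2,\star}\lesssim_{d,q}\|\Omega\|_{L^q}\le C_q L_q(\Omega)\le C_q\|\Omega\|_{L^{q,1}\log L}$. The localized testing constant $C_L[K^\Omega](1,p)$ is controlled by two ingredients: Proposition~\ref{prop:first-localized-LqlogL} bounds its first part by $C_{d,q}p'\|\Omega\|_{L^{q,1}\log L}$, and Corollary~\ref{cor:second-localized-testing} bounds its second part by $C_{d,q}\|\Omega\|_{L^q}$. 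Since $\|h\|_{\mathcal Y_\infty}$ and $p'\ge1$ are harmless, this is again $\lesssim p'\|\Omega\|_{L^{q,1}\log L}$. As the implicit constant in \eqref{equ3.7} is independent of $p$, we obtain \eqref{eq:dyadic-LqlogL-sparse} for bounded mean-zero $\Omega$, with constant depending only on $d,q$.

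For general $\Omega\in L^{q,1}\log L$ with mean zero, set $\Omega_N:=\Omega\mathbf1_{\{|\Omega|\le N\}}-c_N$, where $c_N=\int\Omega\mathbf1_{\{|\Omega|\le N\}}\,d\sigma=-\int\Omega^{\mathrm{tail}}_N\,d\sigma$. Then $\Omega-\Omega_N=\Omega^{\mathrm{tail}}_N+c_N$ has mean zero. We need $\|\Omega_N\|_{L^{q,1}\log L}\lesssim_q\|\Omega\|_{L^{q,1}\log L}$ uniformly in $N$. I would check this through the rearrangement characterization $\int_0^1t^{1/q-1}\log(e/t)\Omega^\ast(t)\,dt$ recorded in Section~\ref{sec:preliminaries}. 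That expression is a quasi-norm, and modulo constants a genuine norm: it is an increasing function of $\Omega^\ast$ with the weight $t^{1/q-1}\log(e/t)$ decreasing, so it is a Lorentz $\Lambda$-norm. Moreover $(\Omega\mathbf1_{\{|\Omega|\le N\}})^\ast\le\Omega^\ast$, and the constant satisfies $|c_N|\le\|\Omega\|_1\lesssim L_q(\Omega)$. Hence each $\Omega_N$ satisfies the sparse estimate with constant $C_{d,q}p'\|\Omega\|_{L^{q,1}\log L}$.

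The main obstacle is passing to the limit, because the sparse collection depends on $N$. I would fix bounded compactly supported $f_1,f_2$. By monotone convergence it suffices to bound $\langle T^{(m,n)}f_1,f_2\rangle$, where $T^{(m,n)}$ restricts the supremum to $m_0\le m<n\le n_0$; this involves finitely many scales and is linearizable with measurable choices. For this finite-scale operator, the difference between $\Omega$ and $\Omega_N$ is bounded pointwise by $\sum_{k=m_0}^{n_0}|K_k^{\Omega-\Omega_N}|*|f_1|$. Its pairing with $|f_2|$ is at most $C(n_0-m_0)\|\Omega-\Omega_N\|_{L^1(S^{d-1})}\|f_1\|_1\|f_2\|_\infty\to0$. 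Note that the finite-scale sup is dominated by $T_{\Omega,\ast}$, so the sparse bound for $\Omega_N$ applies to it.

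Then, as in \eqref{eq:sparse-supremum-finite}, $\sup_{\mathcal S}\Lambda_{\mathcal S,1,p}(f_1,f_2)<\infty$. We obtain $|\langle T^{(m,n)}_{\Omega,\ast}f_1,f_2\rangle|\le C_{d,q}p'\|\Omega\|_{L^{q,1}\log L}\sup_{\mathcal S}\Lambda_{\mathcal S,1,p}(f_1,f_2)+o(1)$. Letting $N\to\infty$ and then $m_0\to-\infty$, $n_0\to\infty$, we choose a $1/2$-sparse $\mathcal S$ realizing half the supremum. This yields \eqref{eq:dyadic-LqlogL-sparse}.
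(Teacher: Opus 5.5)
Your proposal is correct and follows the paper's proof. You apply Theorem~\ref{thm3.6} with $r=2$ to bounded mean-zero kernels via Proposition~\ref{prop:first-localized-LqlogL} and Corollary~\ref{cor:second-localized-testing}. You then use the recentred truncation $\Omega\mathbf 1_{\{|\Omega|\le N\}}-c_N$ with a uniform bound on its $L^{q,1}\log L$ functional; your Lorentz $\Lambda$-norm argument with the decreasing weight $t^{1/q-1}\log(e/t)$ makes this bound more explicit than the paper does. Finally you pick a near-maximizing sparse family $\mathcal S$.

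The only real deviation is the limit $N\to\infty$. The paper uses the global bound $\|T_{\Omega-\widetilde\Omega_N,\ast}\|_{L^2\to L^2}\lesssim_{d,q}\|\Omega-\widetilde\Omega_N\|_{L^{q,1}\log L}\to0$. You instead restrict to finitely many scales, use only $\|\Omega-\Omega_N\|_{L^1(S^{d-1})}\to0$ together with $\|K_k^\Theta\|_{L^1}\lesssim_d\|\Theta\|_{L^1(S^{d-1})}$, and remove the scale restriction by monotone convergence, pairing against $|f_2|$. This is equally valid and needs only $L^1$ convergence of the angular kernels.
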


\begin{proof}
We first prove the global bound needed for
Theorem~\ref{thm3.6}.
Let $\Theta\in L^{q,1}\log L(S^{d-1})$ have mean zero.
By the embedding $L^{q,1}(S^{d-1})\hookrightarrow L^q(S^{d-1})$
and \eqref{eq:classical-angular-Lq} with $r=2$, we have
\[
    \|T_{\Theta,\ast}\|_{L^2\to L^2}
    \lesssim_{d,q}
    \|\Theta\|_{L^q(S^{d-1})}
    \lesssim_q L_q(\Theta)
    \leq
    \|\Theta\|_{L^{q,1}\log L(S^{d-1})}.
\]
If, in addition, \(\Theta\) is bounded, then the kernel family
\(\{K_k^\Theta\}\) satisfies
\eqref{equ3.1}, while
Proposition~\ref{prop:first-localized-LqlogL} and
Corollary~\ref{cor:second-localized-testing} give
\[
    C_L[K^\Theta](1,p)
    \lesssim_{d,q}
    \bigl(p'+1\bigr)\|\Theta\|_{L^{q,1}\log L(S^{d-1})}.
\]
Consequently, Theorem~\ref{thm3.6}, with \(r=2\), yields
\[
    \|T_{\Theta,\ast}\|_{(1,p)\text{-}\mathrm{sparse}}
    \lesssim_{d,q}
    p'\|\Theta\|_{L^{q,1}\log L(S^{d-1})}.
\]
Thus the explicit factor $p'$ comes from the first
testing condition; for fixed $q$, the second testing
constant is independent of $p\geq q'$.

It remains to remove the boundedness assumption.  For \(N\geq1\), set
\[
    d_N:=\int_{S^{d-1}}
       \Omega\mathbf1_{\{|\Omega|\leq N\}}\,d\sigma,
    \qquad
    \widetilde\Omega_N
    :=\Omega\mathbf1_{\{|\Omega|\leq N\}}-d_N.
\]
Then \(\widetilde\Omega_N\) is bounded and has mean zero.  Since
\(\int_{S^{d-1}}\Omega\,d\sigma=0\), we have
\[
    d_N=-\int_{S^{d-1}}
       \Omega\mathbf1_{\{|\Omega|>N\}}\,d\sigma.
\]
The truncation property recorded after
\eqref{eq:Lq1logL-functional} therefore gives
\[
    \sup_{N\geq1}\|\widetilde\Omega_N\|_{L^{q,1}\log L(S^{d-1})}
    \lesssim_q\|\Omega\|_{L^{q,1}\log L(S^{d-1})},
    \qquad
    \|\Omega-\widetilde\Omega_N\|_{L^{q,1}\log L(S^{d-1})}\longrightarrow0.
\]
Applying the bounded-kernel estimate to $\widetilde\Omega_N$ yields a
sparse bound whose constant is uniform in $N$. More precisely, for
bounded, compactly supported functions $f$ and $g$,
\[
    \bigl|
        \langle T_{\widetilde\Omega_N,\ast}f,g\rangle
    \bigr|
    \lesssim_{d,q}
    p' \|\Omega\|_{L^{q,1}\log L(S^{d-1})}
    \sup_{\mathcal S}
    \Lambda_{\mathcal S,1,p}(f,g),
\]
where the supremum is taken over all $1/2$-sparse collections
$\mathcal S$.

On the other hand, sublinearity of the maximal truncation gives the
pointwise estimate
\[
    \bigl|
        T_{\widetilde\Omega_N,\ast}f
        -T_{\Omega,\ast}f
    \bigr|
    \leq
    T_{\Omega-\widetilde\Omega_N,\ast}f.
\]
Therefore, by the $L^2$ estimate established above and the approximation
property of $\widetilde\Omega_N$,
\[
    \bigl\|
        T_{\widetilde\Omega_N,\ast}f
        -T_{\Omega,\ast}f
    \bigr\|_{L^2}
    \leq
    \bigl\|
        T_{\Omega-\widetilde\Omega_N,\ast}f
    \bigr\|_{L^2}
    \lesssim_{d,q}
    \|\Omega-\widetilde\Omega_N\|_{L^{q,1}\log L(S^{d-1})}
    \|f\|_{L^2}
    \longrightarrow 0.
\]
Pairing with $g$ and passing to the limit, we obtain
\[
    \bigl|
        \langle T_{\Omega,\ast}f,g\rangle
    \bigr|
    \lesssim_{d,q}
    p' \|\Omega\|_{L^{q,1}\log L(S^{d-1})}
    \sup_{\mathcal S}
    \Lambda_{\mathcal S,1,p}(f,g).
\]
Using \eqref{eq:sparse-supremum-finite} and choosing a sparse
collection whose form is within a factor \(2\) of the supremum, as
in the bounded-kernel case, the preceding estimate proves
\eqref{eq:dyadic-LqlogL-sparse}.
\end{proof}

\subsection{Passage to the standard maximal truncation}
\label{subsec:standard-maximal-truncation}

It remains to pass from the dyadic maximal truncation to the
standard radial truncation.  This comparison produces the rough
maximal term \(M_\Omega\).  We first establish the required sparse
bound for \(M_\Omega\), for which no cancellation assumption on
\(\Omega\) is needed.
\begin{prop}[Sparse domination of the rough maximal operator]
\label{prop:rough-maximal-sparse}
Let \(1<q<\infty\), let
\(\Omega\in L^{q,1}\log L(S^{d-1})\), and let \(q'\leq p<\infty\).
No cancellation assumption on \(\Omega\) is required.  Then
\begin{equation}\label{eq:rough-maximal-sparse}
    \|M_\Omega\|_{(1,p)\text{-}\mathrm{sparse}}
    \lesssim_{d,q}
    p'
    \|\Omega\|_{L^{q,1}\log L(S^{d-1})},
\end{equation}
where
\[
    M_\Omega f(x)
    :=
    \sup_{r>0}\frac1{r^d}
    \int_{|y|<r}
       |\Omega(y/|y|)|\,|f(x-y)|\,dy.
\]
\end{prop}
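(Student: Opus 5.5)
The introduction already hints at the route: separate the constant and mean-zero parts of \(|\Omega|\), and use the dyadic theorem for the mean-zero part. Set
\[
    m:=\int_{S^{d-1}}|\Omega|\,d\sigma,
    \qquad
    \Theta:=|\Omega|-m,
\]
so that \(\Theta\) has vanishing average. Since \(m\leq\|\Omega\|_{L^q}\lesssim_q L_q(\Omega)\) and the functional is quasi-subadditive under adding constants on the normalized sphere, we get \(\|\Theta\|_{L^{q,1}\log L}\lesssim_q\|\Omega\|_{L^{q,1}\log L}\). The same truncation remark used after \eqref{eq:Lq1logL-functional} applies here.

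\textbf{Step 1: reduction to dyadic averages.} We may assume \(f\geq0\). For \(2^{k-1}\le r<2^k\) we have
\[
    r^{-d}\int_{|y|<r}|\Omega|f(x-y)\,dy
    \lesssim_d
    \sum_{i\le k}2^{(i-k)d}\,A_i f(x),
    \qquad
    A_if:=\bigl(|K_i^{|\Omega|}|\bigr)*f=K_i^{|\Omega|}*f,
\]
using the partition \(\sum\psi(2^{-i}\cdot)=1\) and \(|y|^{-d}\simeq2^{-id}\) on the support of \(\psi(2^{-i}\cdot)\). Hence \(M_\Omega f\lesssim_d\sup_k2^{-kd}\sum_{i\le k}2^{id}A_if\). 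Pointwise this is bounded by a sum over \(\ell\ge0\) of \(2^{-\ell d}\sup_iA_if\), so it suffices to control
\[
    \sup_i K_i^{|\Omega|}*f
    \leq
    m\sup_iK_i^{1}*f+\sup_i\bigl|K_i^{\Theta}*f\bigr| .
\]
The first term is \(\lesssim_d mMf\), and \(M\) has a \((1,1)\)-sparse bound, which implies a \((1,p)\)-sparse bound with constant \(\lesssim m\).

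\textbf{Step 2: the mean-zero part.} For the second term, write
\[
    K_i^\Theta*f=\sum_{i-1<k\le i}T_k^\Theta f .
\]
This is a single-scale truncation, so pointwise \(\sup_i|K_i^\Theta*f|\le T_{\Theta,\ast}f\). Theorem~\ref{thm:dyadic-LqlogL-sparse}, applied to the mean-zero \(\Theta\), then gives
\[
    |\langle \sup_i|K_i^\Theta*f|,g\rangle|
    \le\langle T_{\Theta,\ast}f,|g|\rangle
    \lesssim_{d,q}p'\|\Theta\|_{L^{q,1}\log L}\sup_{\mathcal S}\Lambda_{\mathcal S,1,p}(f,g).
\]
Note that the dyadic theorem is stated for \(\langle T f,g\rangle\). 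To pass to \(|g|\), I would linearize the supremum, writing \(T_{\Theta,\ast}f=\sum(\text{linear pieces})\cdot\) a measurable sign function. The abstract principle handles exactly such measurable truncations, so this is harmless.

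\textbf{Step 3: summation and choice of sparse family.} Combining the two steps gives
\[
    |\langle M_\Omega f,g\rangle|
    \lesssim p'\|\Omega\|_{L^{q,1}\log L}\sup_{\mathcal S}\Lambda_{\mathcal S,1,p}(f,g).
\]
The finiteness argument \eqref{eq:sparse-supremum-finite} then lets us choose a near-extremal \(1/2\)-sparse family, as in Section~\ref{subsec:completion-bounded-kernel-case}.

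\textbf{Main obstacle.} The delicate point is the norm comparison \(\|\,|\Omega|-m\|_{L^{q,1}\log L}\lesssim\|\Omega\|_{L^{q,1}\log L}\). I would prove it via the rearrangement characterization \(\int_0^1t^{1/q-1}\log(e/t)\Omega^\ast(t)\,dt\). Adding a constant \(m\) contributes only \(\lesssim_q m\lesssim L_q(\Omega)\), because \(\int_0^1t^{1/q-1}\log(e/t)\,dt<\infty\). The remaining rearrangement inequality \((F+G)^\ast(t)\le F^\ast(t/2)+G^\ast(t/2)\) completes the argument.
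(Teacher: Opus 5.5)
Your proposal is correct and follows essentially the same route as the paper: you split $|\Omega|$ into its mean and a mean-zero part $\Theta$, reduce $M_\Omega$ to a supremum of single dyadic annular averages, and bound the $\Theta$-part by $T_{\Theta,\ast}$ so that Theorem~\ref{thm:dyadic-LqlogL-sparse} applies, while the constant part is controlled by $Mf$ (the paper uses the two consecutive pieces $K_{j+2}^\Theta+K_{j+3}^\Theta$ where you use one). Two small remarks: in Step 1 the sum should run over $i\le k+3$ rather than $i\le k$, which is harmless; and no linearization is needed to pass to $|g|$, because $T_{\Theta,\ast}f\geq0$, so you can apply the dyadic theorem directly with $|g|$ in place of $g$ and use $\Lambda_{\mathcal S,1,p}(f,|g|)=\Lambda_{\mathcal S,1,p}(f,g)$.
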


\begin{proof}
Let
\[
    \eta(z):=\sum_{u=2}^{3}\psi(2^{-u}z).
\]
By the support and partition-of-unity properties of \(\psi\),
\[
    \eta(z)=1
    \qquad\text{whenever}\qquad
    \frac12\leq |z|\leq1.
\]
For an angular function \(\Phi\), define
\[
    \mathcal A_j^\Phi f(x)
    :=
    \int_{\mathbb R^d}
       \eta(2^{-j}y)
       \frac{\Phi(y/|y|)}{|y|^d}
       f(x-y)\,dy.
\]

Fix \(r>0\), and choose \(k\in\mathbb Z\) such that
\(2^{k-1}<r\leq2^k\).  
Decompose \(B(0,r)\) into the annuli
\[
    \mathcal R_{k-s}
    :=
    \{y:2^{k-s-1}<|y|\leq2^{k-s}\},
   \qquad s\geq0.
\]
Since
\[
    B(0,r)
    \subset
    \bigcup_{s=0}^{\infty}\mathcal R_{k-s},
\]
and the integrand defining $M_\Omega$ is nonnegative, we may enlarge
each intersection $B(0,r)\cap\mathcal R_{k-s}$ to the full annulus
$\mathcal R_{k-s}$. Indeed, $r^{-d}\leq 2^d2^{-kd}$, while
$|y|^d\leq 2^{(k-s)d}$ for $y\in \mathcal R_{k-s}$. Therefore,
\begin{align}
   M_\Omega f(x)
&\lesssim_d
\sup_{k\in\mathbb Z}
2^{-kd}\sum_{s=0}^{\infty}
2^{(k-s)d}\mathcal A_{k-s}^{|\Omega|}(|f|)(x)
\nonumber\\
&=
\sup_{k\in\mathbb Z}
\sum_{s=0}^{\infty}
2^{-sd}\mathcal A_{k-s}^{|\Omega|}(|f|)(x)
\lesssim_d
\sup_{j\in\mathbb Z}\mathcal A_j^{|\Omega|}(|f|)(x).
\label{eq:rough-maximal-annular-reduction}
\end{align}

Set
\[
    a_\Omega
    :=
    \int_{S^{d-1}}|\Omega(\theta)|\,d\sigma(\theta),
    \qquad
    \Theta:=|\Omega|-a_\Omega.
\]
Then \(\int_{S^{d-1}}\Theta\,d\sigma=0\).  Moreover, since
\(\sigma(S^{d-1})=1\), the layer-cake formula gives
\[
    a_\Omega
    \leq\frac1qL_q(\Omega)
    \leq\frac1q\|\Omega\|_{L^{q,1}\log L}.
\]
Using the equivalence with the usual homogeneous
Lorentz-Zygmund norm and
\[\|a_\Omega\mathbf1_{S^{d-1}}\|_{L^{q,1}\log L}
\lesssim_q a_\Omega,\] we obtain
\[
    \|\Theta\|_{L^{q,1}\log L}
    +a_\Omega
    \lesssim_q
    \|\Omega\|_{L^{q,1}\log L}.
\]
Since
\[
    \mathcal A_j^{|\Omega|}(|f|)
    =
    \mathcal A_j^\Theta(|f|)
    +a_\Omega\mathcal A_j^1(|f|),
\]
and
\[
    \mathcal A_j^\Theta f
    =
    K_{j+2}^\Theta*f+K_{j+3}^\Theta*f,
\]
the two consecutive kernel pieces on the right are included in the
dyadic maximal truncation.  Hence
\[
    \sup_{j\in\mathbb Z}
       |\mathcal A_j^\Theta(|f|)|
    \leq
    T_{\Theta,\ast}(|f|).
\]
The constant-angular part satisfies
\[
    \sup_{j\in\mathbb Z}
       \mathcal A_j^1(|f|)
    \lesssim_d Mf.
\]
Consequently, \eqref{eq:rough-maximal-annular-reduction} yields
\[
    M_\Omega f
    \lesssim_d
    T_{\Theta,\ast}(|f|)
    +a_\Omega Mf.
\]
Let \(f\) and \(g\) be bounded and compactly supported.
Applying Theorem~\ref{thm:dyadic-LqlogL-sparse} to the mean-zero
function \(\Theta\), and using the standard sparse bound for the
Hardy--Littlewood maximal operator, we obtain
\begin{align*}
    \bigl|\langle M_\Omega f,g\rangle\bigr|
    &\lesssim_d
    \bigl\langle T_{\Theta,\ast}(|f|),|g|\bigr\rangle
    +
    a_\Omega\langle M|f|,|g|\rangle
    \\
    &\lesssim_{d,q}
    \left(
        p'\|\Theta\|_{L^{q,1}\log L}
        +a_\Omega
    \right)
    \sup_{\mathcal S}
    \Lambda_{\mathcal S,1,p}(f,g)
    \\
    &\lesssim_{d,q}
    p'
    \|\Omega\|_{L^{q,1}\log L}
    \sup_{\mathcal S}
    \Lambda_{\mathcal S,1,p}(f,g),
\end{align*}
where the supremum is taken over all \(1/2\)-sparse collections.
Using \eqref{eq:sparse-supremum-finite} and the same near-maximizing
choice of a sparse collection \(\mathcal S\), the preceding estimate
gives
\[
    \bigl|\langle M_\Omega f,g\rangle\bigr|
    \lesssim_{d,q}
    p'
    \|\Omega\|_{L^{q,1}\log L(S^{d-1})}
    \Lambda_{\mathcal S,1,p}(f,g).
\]
This proves \eqref{eq:rough-maximal-sparse}.
\end{proof}

With the preceding auxiliary estimate in hand, we can now complete
the passage to the standard maximal truncation.

\begin{thm}[Sparse bound for the standard maximal truncation]
\label{thm:continuous-LqlogL-sparse}
Let \(1<q<\infty\), let
\(\Omega\in L^{q,1}\log L(S^{d-1})\) satisfy
\eqref{eq:Omega-global-cancellation}, and let \(q'\leq p<\infty\).  Then the
standard radial maximal truncation satisfies
\begin{equation}\label{eq:continuous-LqlogL-sparse}
    \|T_\Omega^\ast\|_{(1,p)\text{-}\mathrm{sparse}}
    \lesssim_{d,q}
    p'
    \|\Omega\|_{L^{q,1}\log L(S^{d-1})}.
\end{equation}
\end{thm}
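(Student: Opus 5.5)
The plan is to follow the pattern of the bounded-kernel case at the end of Section~\ref{subsec:completion-bounded-kernel-case}. We combine the pointwise comparison between radial and dyadic truncations with Theorem~\ref{thm:dyadic-LqlogL-sparse} and Proposition~\ref{prop:rough-maximal-sparse}. The first step is the pointwise inequality
\[
    T_\Omega^\ast f(x)\lesssim_d T_{\Omega,\ast}f(x)+M_\Omega f(x),
\]
recorded in Section~\ref{subsec:dyadic-abstract-reduction}, which we were told holds for every $\Omega\in L^1(S^{d-1})$. Since $L^{q,1}\log L\subset L^1$, it applies here. If one wants to check it directly, proceed as follows. Fix $\varepsilon>0$ and choose $k$ with $2^{k-1}<\varepsilon\le 2^k$. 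The truncated integral $\int_{|y|>\varepsilon}$ differs from $\sum_{j>k+c}K_j^\Omega*f$ only on a fixed finite number of annuli of radius comparable to $2^k$. There the kernel is bounded by $2^{-kd}|\Omega(y/|y|)|$, so the difference is at most $C_dM_\Omega f(x)$. Here $\sum_{j>m}$ is understood as a limit of the finite sums $\sum_{m<j\le n}$, using that $f$ is bounded and compactly supported and $\Omega$ has mean zero, so the tail $\sum_{j>n}K_j^\Omega*f$ tends to zero pointwise.

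Next, let $f,g$ be bounded and compactly supported. Pairing the pointwise bound with $|g|$ gives
\[
    |\langle T_\Omega^\ast f,g\rangle|\lesssim_d \langle T_{\Omega,\ast}f,|g|\rangle+\langle M_\Omega f,|g|\rangle .
\]
Apply Theorem~\ref{thm:dyadic-LqlogL-sparse} to the first term, with the nonnegative function $|g|$ playing the role of the second input, and Proposition~\ref{prop:rough-maximal-sparse} to the second. Each yields a $1/2$-sparse collection, $\mathcal S_1$ and $\mathcal S_2$, with constant $C_{d,q}p'\|\Omega\|_{L^{q,1}\log L}$. Both terms are therefore bounded by
\[
    C_{d,q}p'\|\Omega\|_{L^{q,1}\log L}\sup_{\mathcal S}\Lambda_{\mathcal S,1,p}(f,g).
\]
The sparse forms depend only on $|f|,|g|$, so the sign change is harmless.

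Finally, we must produce a single sparse family. By \eqref{eq:sparse-supremum-finite} the supremum over $1/2$-sparse collections is finite. Choose $\mathcal S$ with $\sup_{\mathcal S'}\Lambda_{\mathcal S',1,p}(f,g)\le 2\Lambda_{\mathcal S,1,p}(f,g)$, exactly as in the bounded-kernel and dyadic cases. This yields \eqref{eq:continuous-LqlogL-sparse}.

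The only step needing care is the first. We must make sure the radial-versus-dyadic comparison genuinely holds without boundedness of $\Omega$, with constant depending only on $d$. We also need the convergence of the dyadic tails to be justified for $\Omega\in L^{q,1}\log L$. I expect the latter to follow from the $L^2$ boundedness of $T_{\Omega,\ast}$ proved in Theorem~\ref{thm:dyadic-LqlogL-sparse}, combined with dominated convergence for the finite range of scales where $f$ contributes. The remaining steps are formal.
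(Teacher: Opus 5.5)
Your proposal is correct and follows the paper's proof: the pointwise comparison $T_\Omega^\ast f\lesssim_d T_{\Omega,\ast}f+M_\Omega f$, then Theorem~\ref{thm:dyadic-LqlogL-sparse} and Proposition~\ref{prop:rough-maximal-sparse} for the two terms, then a single near-maximizing sparse collection chosen via \eqref{eq:sparse-supremum-finite}. Your worry about the dyadic tails is unnecessary: since $f$ has compact support, for each fixed $x$ we have $K_j^\Omega*f(x)=0$ for all sufficiently large $j$, so $\sum_{j>m}K_j^\Omega*f(x)$ is a finite sum bounded directly by $T_{\Omega,\ast}f(x)$, with no mean-zero or $L^2$ argument needed.
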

\begin{proof}
Let $f$ and $g$ be bounded and compactly supported. By the first comparison in
Section~\ref{subsec:dyadic-abstract-reduction}, which remains valid
for every \(\Omega\in L^1(S^{d-1})\), we have
\[
    T_\Omega^\ast f
    \lesssim_d
    T_{\Omega,\ast}f+M_\Omega f.
\]
Consequently, Theorem~\ref{thm:dyadic-LqlogL-sparse}
and Proposition~\ref{prop:rough-maximal-sparse} imply
\begin{align*}
    \bigl|\langle T_\Omega^\ast f,g\rangle\bigr|
    &\lesssim_d
    \bigl\langle T_{\Omega,\ast}f,|g|\bigr\rangle
    +
    \langle M_\Omega f,|g|\rangle
    \lesssim_{d,q}
    p'
    \|\Omega\|_{L^{q,1}\log L(S^{d-1})}
    \sup_{\mathcal S}
    \Lambda_{\mathcal S,1,p}(f,g),
\end{align*}
where the supremum is taken over all $1/2$-sparse collections.
Using \eqref{eq:sparse-supremum-finite} and the same near-maximizing
selection as above, the preceding estimate yields
\eqref{eq:continuous-LqlogL-sparse}.
\end{proof}

\section*{Acknowledgments}
The author is deeply grateful to Professor Kangwei Li for his
continued guidance, many helpful discussions, and careful comments
on earlier versions of this manuscript.

\section*{Declaration of Generative AI Use}
During the preparation of this manuscript, the author used ChatGPT (OpenAI) 
to improve the English presentation and expository clarity and suggest clearer 
or more complete formulations of a limited number of routine technical steps. 
The main results, proof strategy, and core mathematical arguments were developed 
independently by the author. All AI-assisted suggestions were critically assessed, 
independently verified, and revised as needed by the author, who takes full 
responsibility for the content of the manuscript.

\bibliographystyle{plain}
\bibliography{ams.bib}
\end{document}